\setlist{topsep=0pt, itemsep=0pt,parsep=0pt, leftmargin=*}
\pgfplotsset{compat=1.14}
\definecolor{sixclassRdYlBu1}{rgb}{0.84,0.19,0.15}
\definecolor{sixclassRdYlBu2}{rgb}{0.99,0.55,0.35}
\definecolor{sixclassRdYlBu3}{rgb}{1.0,0.88,0.56}
\definecolor{sixclassRdYlBu4}{rgb}{0.88,0.95,0.97}
\definecolor{sixclassRdYlBu5}{rgb}{0.57,0.75,0.86}
\definecolor{sixclassRdYlBu6}{rgb}{0.27,0.46,0.71}
\DeclareRobustCommand{\rchi}{{\mathpalette\irchi\relax}}
\newcommand{\irchi}[2]{\raisebox{\depth}{$#1\chi$}}
\DeclarePairedDelimiterX\skal[2]{(}{)}{#1\,,\,#2}
\newtheorem{theorem}{Theorem}[section]
\newtheorem{corollary}[theorem]{Corollary}
\newtheorem{lemma}[theorem]{Lemma}
\newtheorem{proposition}[theorem]{Proposition}
\theoremstyle{definition}
\newtheorem{definition}[theorem]{Definition}
\newtheorem{remark}[theorem]{Remark}
\newtheorem*{propositionproof}{Proof of Proposition \ref{prop:hessian_Jhat_error_NCD}}
\newtheorem{assumption}{Assumption}
\numberwithin{equation}{section}
\newcommand{\hS}[1]{\hspace{#1pt}}
\newcommand{\R}{\mathbb{R}}
\newcommand{\N}{\mathbb{N}}
\newcommand{\J}{\mathcal{J}}
\newcommand{\Jhat}{\hat{\mathcal{J}}}
\newcommand{\cJhatn}{{{\Jhat_\red}}}
\newcommand{\HH}{\mathcal{H}}
\newcommand{\cHhatn}{\hat{\mathcal{H}}_{\red,\mu}}
\newcommand{\HHhat}{\hat{\mathcal{H}}}
\newcommand{\pr}{\textnormal{pr}}
\newcommand{\du}{\textnormal{du}}
\newcommand{\dred}[1]{d_{#1}}
\newcommand{\cont}[1]{\gamma_{#1}}
\newcommand{\bformd}{a_{\mu}}
\newcommand{\lformd}{l_{\mu}}
\newcommand{\kformd}{k_{\mu}}
\newcommand{\jformd}{j_{\mu}}
\newcommand{\resd}{r_{\mu}}
\newcommand{\Params}{\mathcal{P}}
\newcommand{\red}{{r}}
\newcommand{\Proj}{\mathrm{P}}
\newcommand{\labeltext}[2]{%
  \@bsphack
  \csname phantomsection\endcsname 
  \def\@currentlabel{#1}{\label{#2}}%
  \@esphack
}
\title[A Newton TR-RB approximation for PDE-constrained optimization ]{An adaptive projected Newton non-conforming dual approach for trust-region reduced basis approximation of PDE-constrained parameter optimization}
\thanks{The authors acknowledge funding by the Deutsche Forschungsgemeinschaft (DFG) for the project {\em Localized Reduced Basis Methods for PDE-constrained Parameter Optimization}
under contracts OH 98/11-1; SCHI 1493/1-1; VO 1658/6-1. T. Keil, M. Ohlberger and F. Schindler acknowledge funding by the DFG under Germany’s Excellence Strategy EXC 2044 390685587, Mathematics M\"unster: Dynamics -- Geometry -- Structure.
}
\author[S.~Banholzer]{Stefan Banholzer}
\author[others]{Tim Keil}
\author[ ]{Luca Mechelli}
\author[ ]{Mario Ohlberger}
\author[ ]{Felix Schindler}
\author[ ]{Stefan Volkwein}
\address[S.~Banholzer, L.~Mechelli, S.~Volkwein]{Department of Mathematics and Statistics, Universit\"at Konstanz, Universit\"atsstr.~10, D-78457 Konstanz, Germany.} 
\email{{\tt \{stefan.banholzer,luca.mechelli,stefan.volkwein\}@uni-konstanz.de}}
\address[T.~Keil, M.~Ohlberger, F.~Schindler]{Mathematics M\"unster, Westf\"alische Wilhelms-Universit\"at M\"unster, Einsteinstr.~62, D-48149 M\"unster, Germany.}
\email{{\tt \{tim.keil,mario.ohlberger,felix.schindler\}@uni-muenster.de}}
\keywords{ PDE-constrained optimization, trust-region method, reduced basis method, model-order reduction, parametrized systems, large scale problems.}
\subjclass[2010]{ 49M20, 49K20, 35J20, 65N30, 90C06 }
\begin{document}

\begin{abstract}
In this contribution we device and analyze improved variants of the non-conforming dual approach for trust-region reduced basis (TR-RB) approximation of PDE-constrained parameter optimization that has recently been introduced in [Keil et al.. A non-conforming dual approach for adaptive Trust-Region Reduced Basis approximation of PDE-constrained optimization. arXiv:2006.09297,  2020]. 
The proposed methods use model order reduction techniques for parametrized PDEs to significantly reduce the computational demand of parameter optimization with PDE constraints in the context of large-scale or multi-scale applications.
The adaptive TR approach allows to localize the reduction with respect to the parameter space
along the path of optimization without wasting unnecessary resources in an offline phase. 
The improved variants employ projected Newton methods to solve the local optimization problems within each TR step to benefit from high convergence rates. This implies new strategies in constructing the RB spaces, together with an estimate for the approximation of the hessian.
Moreover, we present a new proof of convergence of the TR-RB method based on infinite-dimensional arguments, not restricted to the particular case of an RB approximation and provide an a posteriori error estimate for the approximation of the optimal parameter.
Numerical experiments demonstrate the efficiency of the proposed methods.
\end{abstract}

\maketitle

\section*{Introduction}
\label{sec:introduction}
 
Parametric PDE-constrained optimization problems are of interest in many fields, such as geology, chemistry and engineering. Although the PDE model effectively describes the behavior of the system, these models may lead to difficulties when computing an optimal solution with respect to a given cost. First, it may not be guaranteed that there exists a unique optimum, due to the fact that the problem may not be (strictly) convex. Second, discretizing the PDE by, e.g., Finite Element (FE) or Finite Volume methods leads to high dimensional full order models (FOM) which might be arbitrarily costly to solve.
The latter led to an extensive research activity over the last two decades, particular remedies include mesh adaptivity and/or model order reduction (MOR), see \cite{BKR2000,MR2556843,Clever2012,MR2905006,MR1887737,MR2892950} and \cite{BCOW2017,HRS2016,QMN2016}, respectively.\\[.4em]
{\em Model order reduction for PDE-constrained optimization.}
MOR techniques are a broad family of methods used to reduce the computational complexity of a given system, by exploiting its underlying structure and by building a reduced order model (ROM). Among these, the Reduced Basis method (RB) is particularly suited for parameter-dependent problems. This projection-based technique consists in reconstructing an approximation to the solution manifold of the PDE in a low-dimensional linear space, spanned by given solutions (snapshots) for carefully selected parameters.
One approach to construct a ROM (the so called \emph{offline phase}) is to employ a goal-oriented greedy algorithm based on a posteriori error estimates on the error between FOM and ROM quantities, resulting in quasi-optimally selected snapshots \cite{BCD+2011,HAA2017}.
Alternatively, the ROM can be built by means of a proper orthogonal decomposition (POD) in the method of snapshots; see \cite{GV17} and the references therein.
Once the ROM is built, it can be evaluated quickly (the so-called online phase).
There exists a large amount of literature using such reduced order surrogate models for optimization methods. A posteriori error estimates for reduced order approximation of linear-quadratic parametric optimization (and optimal control) problems were studied, e.g., in \cite{Dede2012,DH2015,GK2011,KTV13,NRMQ2013,OP2007}. In particular, in \cite{DH2015,KTV13} the authors show a posteriori error estimates also for the error between the optimal parameter/control and the approximate one. Although the standard offline/online decomposition is a viable approach for parametric optimization problems, its performance suffers when the dimension of the parameter space increases significantly. For very high dimensional parameter sets, simultaneous parameter and state reduction can be considered \cite{HO2015,LWG2010}.
To speed-up the process, it is advantageous to follow the optimization pattern and compute only locally accurate RB models; see, e.g. \cite{BMV2020,GHH2016,ZF2015}. In this context, localized RB methods, based on efficient localized a posteriori error control
and online enrichment, are particularly well-suited \cite{BEOR2016,BIORSS21,OSS2018,OS2015,OS2017}. With respect to the above mentioned works, we are interested in a different, but related approach, which is based on a trust-region (TR) method. \\[.4em]
{\em Trust-Region reduced order models for parametric PDE-constrained optimization.}
TR approaches are widely used in optimization, thanks to their robust behavior, which ensures global convergence. The key idea is to define a local approximation of the nonlinear objective, which allows using faster optimization tools; cf.~\cite{CGT00,NW06}. Obviously, the accuracy of the surrogate model has to be monitored during the TR iterations and possible updates have to be considered. A well-established method for MOR is the TR-POD algorithm \cite{AFS00,RoggTV17}. Furthermore, in \cite{QGVW2017} a TR-RB algorithm is presented for PDE-constrained optimization problems with unbounded parameter sets. This method is based on  \cite{YM2013}, where necessary and sufficient conditions are given to guarantee the convergence of the TR method. In this case, the TR is defined accordingly to the a posteriori error estimate for the cost functional. In \cite{KMOSV20}, the TR-RB method of \cite{QGVW2017} is extended to the case of constrained parameter sets and further improved regarding its convergence. \\[.4em]
{\em Main results.}
In this contribution we present several significant advances for the adaptive 
TR-RB optimization method presented in \cite{KMOSV20}:
\begin{itemize}
\item We propose higher order TR-RB methods using the projected Newton method to solve the TR sub-problems. The gradient and hessian of the optimization cost functional are approximated using the non-conforming dual (NCD) approach;
\item we provide efficiently computable a posteriori error estimates for the ROM error in reconstructing the FOM hessian and the optimal parameter;
\item we present a new proof of convergence of the TR-RB method based on infinite-dimensional arguments, not restricted to the particular case of an RB approximation;
\item we devise a new adaptive enrichment strategy for the progressive construction of RB spaces, including rigorous conditions for skipping enrichment to ensure the smallest possible ROM dimension;
\item we demonstrate in numerical experiments that our new TR-RB methods outperform existing
approaches for large scale optimization problems in well defined benchmark problems.  \\[-.6em]
\end{itemize} 
{\em Organization of the article.}
In Section~\ref{sec:problem} we introduce the PDE-constrained optimization problem and the necessary and sufficient optimality condtions to characterize local minimizers.
In Section~\ref{sec:mor}, we derive the FOM and ROM and furthermore state the a posteriori error estimates to certify the ROM, with particular focus on the approximation of the FOM hessian and the optimal parameter.
The improved adaptive TR-RB algorithm is introduced in Section~\ref{sec:TRRB_and_adaptiveenrichment}, where also the convergence analysis and adaptive Taylor-based enrichment strategy are carried out.
Finally, the numerical experiments, in which we compare the algorithm to selected state of the art optimization methods from the literature, are illustrated in Section~\ref{sec:num_experiments}.

\section{Problem formulation}
\label{sec:problem}

Given a real-valued Hilbert space $V$ with inner product $(\cdot \,,\cdot)$ and its induced norm $\|\cdot\|$, we are interested in efficiently approximating
PDE-constrained parameter optimization of a quadratic continuous functional $\J: V \times \Params \to \R$,
where the compact and convex admissible parameter set $\Params \subset \R^P$, with $P \in \N$ is considered to describe bilateral box constraints,
i.e.,
\[ 
\Params:= \left\{\mu\in\mathbb{R}^P\,|\,\mu_\mathsf{a} \leq \mu \leq \mu_\mathsf{b} \right\} \subset \R^P,
\] 
for given parameter bounds $\mu_\mathsf{a},\mu_\mathsf{b}\in\mathbb{R}^P$, where {``$\leq$''} has to be understood component-wise. 
To be more precise, we consider the minimization problem
{\color{white}
	\begin{equation}
	\tag{P}
	\label{P}
	\end{equation}
}\vspace{-51 pt} 
\begin{subequations}\begin{align}
	& \min_{\mu \in \Params} \J(u_\mu, \mu),
	&&\text{with } \J(u, \mu) = \Theta(\mu) + j_\mu(u) + k_\mu(u, u),
	\tag{P.a}\label{P.argmin}\intertext{%
		subject to $u_\mu \in V$ being the solution of the \emph{state -- or primal -- equation}
	}
	&a_\mu(u_\mu, v) = l_\mu(v) &&\text{for all } v \in V,
	\tag{P.b}\label{P.state}
	\end{align}\end{subequations}%
\setcounter{equation}{0}
where $\Theta \in \Params \to \mathbb{R}$ denotes a parameter functional. 
For each admissible parameter $\mu \in \Params$, $a_\mu: V \times V \to \R$ denotes a continuous and coercive bilinear form,
$l_\mu, j_\mu: V \to \R$ are continuous linear functionals and $k_\mu: V \times V \to \R$ denotes a continuous symmetric bilinear form. 
The primal residual of \eqref{P.state} is key for the optimization as well as for a posteriori error estimation. 
We define for given $u \in V$, $\mu \in \Params$, the primal residual $r_\mu^\pr(u) \in V'$ associated with \eqref{P.state} by
\begin{align}
r_\mu^\pr(u)[v] := l_\mu(v) - a_\mu(u, v) &&\text{for all }v \in V.
\label{eq:primal_residual}
\end{align}
\begin{remark}
	The Lagrange functional for \eqref{P} is given by $\mathcal{L}(u,\mu,p) = \J(u,\mu) + r_\mu^\pr(u)[p]$ for $(u,\mu)\in V\times\Params$ and for $p\in V$. In particular we have $\J(u_\mu,\mu) = \mathcal{L}(u_\mu,\mu,p)$ for all $p\in V$. 
\end{remark} 
A standard assumption for the efficient employment of RB methods is the parameter separability from $V$, which we assume in this work. 
For applications where this assumption does not hold, so-called empirical interpolation (EI) techniques \cite{BMNP2004,CS2010,DHO2012} can be utilized. 
\begin{assumption}[Parameter-separability]
\label{asmpt:parameter_separable}
		For $\Xi^a, \Xi^l, \Xi^j, \Xi^k \in \N$, we assume $a_\mu$, $l_\mu$, $j_\mu$, $k_\mu$ to be parameter separable with non-parametric components
		$a_i: V \times V \to \R$ for $1 \leq i \leq \Xi^a$, $l_i \in V'$ for $1 \leq i \leq \Xi^l$, $j_i \in V'$ for $1 \leq i \leq \Xi^j$
		and $k_i: V \times V \to \R$ for $1 \leq i \leq \Xi^k$,
		and respective parameter functionals $\theta_i^a, \theta_i^l, \theta_i^j, \theta_i^k \in \Params'$, such that\\[-1.2em]
		\begin{align*}
		a_\mu(u, v) &= \sum_{i = 1}^{\Xi^a} \theta_i^a(\mu)\, a_i(u, v)
		, &&&&& l_\mu(v) &= \sum_{i = 1}^{\Xi^l} \theta_i^l(\mu)\, l_i(v),
		\end{align*}
		and analogously for $j_\mu$ and $k_\mu$.
\end{assumption}
Parameter separability also holds for the primal residual, the cost functional as well as all other linear dependent quantities  in this work.

Gradient-based solution methods for problems of type \eqref{P} require information about first-order directional derivatives of the cost functional $\J$. 
If second-order derivatives are available, more advanced optimization routines can be applied which generally yields higher local convergence rates.

\subsection{Notation for differentiability}
\label{sec:differentiability}
Assuming the objective functional $\J: V \times \Params \to \R$ to be Fr\'echet differentiable w.r.t. $\mu \in \Params$, we define
the Fr\'echet derivative of $\J$ w.r.t.~its second argument in the direction of $\nu \in \mathbb{R}^P$
by $\partial_\mu \J(u, \mu) \cdot \nu$ (noting that the dual space of $\mathbb{R}^P$ is itself).
Moreover, we refer to $\partial_\mu \J(u, \mu)$ as the derivative w.r.t.~$\mu$ 
and  
for $u \in V$, $\mu \in \Params$ we denote 
the \emph{partial derivative} of $\J(u, \mu)$ w.r.t.~the $i$-th
component of $\mu$ by $\partial_{\mu_i}\J(u, \mu)$ for $1 \leq i \leq P$.
Note that $\partial_{\mu_i}\J(u, \mu) = \partial_\mu \J(u, \mu)\cdot e_i $,
where $e_i \in \R^P$ denotes the $i$-th canonical unit vector.
Furthermore, we denote the \emph{gradient} of $\J$ w.r.t.~its second argument -- the vector of components
$\partial_{\mu_i}\J(u, \mu)$ -- by the operator $\nabla_\mu \J: V \times \Params \to \R^P$.
Similarly, if $\J$ is Fr\'echet differentiable w.r.t.~each $u \in V$,
for each $u \in V$ and each $\mu \in \Params$ there exists a bounded linear functional $\partial_u \J(u, \mu)\in V'$,
such that the Fr\'echet derivative of $\J$ w.r.t.~its first argument in any direction $v \in V$ is given by $\partial_u \J(u, \mu)[v]$.
We refer to $\partial_u \J(u, \mu)$ simply as the derivative w.r.t.~$u$.
If $\J$ is twice Fr\'echet differentiable w.r.t.~each $\mu \in \Params$, we denote its \emph{hessian} w.r.t.~its second argument by the operator $\HH_\mu \J: V \times \Params \to \R^{P \times P}$. Finally, we denote the total derivative w.r.t.~$\mu_i$ by $d_{\mu_i}$, i.e. 
$d_{\mu_i} \J(u_{\mu}, \mu) = \partial_{\mu_i} \J(u_{\mu}, \mu) +  \partial_{u} \J(u_{\mu}, \mu)[ d_{\mu_i}u_{\mu}]$.
We treat $a$, $l$, $j$ and $k$ in a similar manner, although, for notational compactness, we indicate their parameter-dependency by a subscript for compactness and refer to \cite{KMOSV20} for further details. 

\begin{assumption}[Differentiability of $a$, $l$ and $\J$]
	\label{asmpt:differentiability}
  We assume $a_{\mu}$, $l_{\mu}$ and $\J$ to be twice continuously Fr\'{e}chet differentiable w.r.t.~$\mu$. This obviously requires that all parameter-dependent coefficient functions in Assumption~{\rm{\ref{asmpt:parameter_separable}}} are twice continuously differentiable as well. We also require all $\mu$-dependent functions to have locally Lipschitz-continuous second derivatives (for locally quadratic convergence of the projected Newton method).
\end{assumption}

For the continuous and coercive bilinear form $a_\mu(\cdot\,,\cdot)$
, we can define the bounded solution map $\mathcal{S}:\Params \to V$, $\mu \mapsto u_\mu=: \mathcal S(\mu)$, where $u_\mu$ is the unique solution to \eqref{P.state} for a given $\mu\in\Params$. 
The Fr\'echet derivatives of $\mathcal{S}$ have been used for RB methods for constructing Taylor RB spaces (see \cite{HAA2017}) and for deriving optimality conditions for \eqref{P} (see \cite{HPUU2009,Tro2010}).
\begin{proposition}[Fr\'{e}chet derivative of the solution map]
	\label{prop:solution_dmu_eta}
	Considering the solution map $\mathcal S:\Params \to V$, $\mu \mapsto u_\mu= \mathcal{S}(\mu)$, its Fr\'{e}chet derivative $d_{\nu} u_\mu= \mathcal{S}'(\mu)\cdot\nu \in V$ w.r.t.~a direction $\nu\in\mathbb{R}^P$ is the unique solution of
	\begin{align}\label{eq:primal_sens}
	a_\mu(d_{\nu}u_\mu, v) = \partial_\mu r_\mu^\pr(u_\mu)[v] \cdot \nu &&\text{for all } v \in V.
	\end{align}
\end{proposition}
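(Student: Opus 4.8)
The plan is to first produce the claimed solution $d_\nu u_\mu$ as a Lax--Milgram solution of an auxiliary linear problem, and then to verify that this object really is the Fr\'echet derivative of $\mathcal{S}$ at $\mu$ in the direction $\nu$. For the first part, note that by Assumption~\ref{asmpt:differentiability} the forms $a_\mu$ and $l_\mu$ are continuously Fr\'echet differentiable w.r.t.\ $\mu$, so for fixed $\mu\in\Params$ and $\nu\in\R^P$ the map $v\mapsto \partial_\mu r_\mu^\pr(u_\mu)[v]\cdot\nu = \partial_\mu l_\mu(v)\cdot\nu - \partial_\mu a_\mu(u_\mu,v)\cdot\nu$ is a bounded linear functional on $V$ (Assumption~\ref{asmpt:parameter_separable} even makes it explicit as a combination of the non-parametric components weighted by the directional derivatives of the parameter functionals). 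Since $a_\mu$ is continuous and coercive, the Lax--Milgram theorem yields a unique $z_\nu\in V$ solving \eqref{eq:primal_sens}, and $\nu\mapsto z_\nu$ is clearly linear and bounded.

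To identify $z_\nu$ with $\mathcal{S}'(\mu)\cdot\nu$ I would invoke the implicit function theorem for the residual map. On an open neighbourhood $\mathcal{O}\subset\R^P$ of $\Params$ on which all coefficient functions are defined and twice continuously differentiable (Assumptions~\ref{asmpt:parameter_separable}--\ref{asmpt:differentiability}), set $F\colon\mathcal{O}\times V\to V'$, $F(\mu,u)[v]:=a_\mu(u,v)-l_\mu(v)$. Then $F(\mu,\mathcal{S}(\mu))=0$ for all $\mu$; $F$ is continuously Fr\'echet differentiable; and $\partial_u F(\mu,u)\in\mathcal{L}(V,V')$, $w\mapsto a_\mu(w,\cdot)$, is a Banach-space isomorphism by Lax--Milgram (coercivity of $a_\mu$). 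Hence, by the implicit function theorem and the uniqueness of the state solution, $\mathcal{S}$ is continuously Fr\'echet differentiable near $\mu$, and differentiating the identity $F(\mu,\mathcal{S}(\mu))=0$ in the direction $\nu$ gives, for all $v\in V$,
\[
a_\mu(\mathcal{S}'(\mu)\cdot\nu,\, v) = \partial_\mu l_\mu(v)\cdot\nu - \partial_\mu a_\mu(u_\mu,v)\cdot\nu = \partial_\mu r_\mu^\pr(u_\mu)[v]\cdot\nu ,
\]
i.e.\ $\mathcal{S}'(\mu)\cdot\nu$ solves \eqref{eq:primal_sens}; by the uniqueness of the first step, $\mathcal{S}'(\mu)\cdot\nu=z_\nu=:d_\nu u_\mu$.

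The only thing that requires genuine care is that $\Params$ is a compact box with non-empty boundary; this is exactly why one passes to the open neighbourhood $\mathcal{O}$, so that derivatives at boundary parameters are understood as restrictions of honest derivatives. An entirely self-contained alternative that avoids the implicit function theorem is to argue directly with difference quotients: subtracting the state equations at $\mu+t\nu$ and $\mu$, testing with $v\in V$, and adding and subtracting $a_{\mu+t\nu}(u_\mu,v)$ yields
\[
a_{\mu+t\nu}\!\Big(\tfrac{u_{\mu+t\nu}-u_\mu}{t},\, v\Big) = \tfrac1t\big(l_{\mu+t\nu}(v)-l_\mu(v)\big) - \tfrac1t\big(a_{\mu+t\nu}-a_\mu\big)(u_\mu,v).
\]
As $t\to0$ the right-hand side converges in $V'$ to $\partial_\mu r_\mu^\pr(u_\mu)[\cdot]\cdot\nu$ by Fr\'echet differentiability of $l_\mu$ and $a_\mu$, while replacing $a_{\mu+t\nu}$ by $a_\mu$ on the left costs only the term $\tfrac1t(a_{\mu+t\nu}-a_\mu)(u_{\mu+t\nu}-u_\mu,v)$, which is $O(t)$ since $\mathcal{S}$ is locally Lipschitz (coercivity together with continuity of $\mu\mapsto a_\mu,l_\mu$). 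Bounded invertibility of $w\mapsto a_\mu(w,\cdot)$ then transfers the convergence of the right-hand sides to norm convergence of the difference quotients in $V$, necessarily with limit $z_\nu$. I expect the bookkeeping of this mixed term to be the only mildly technical point; everything else is a routine application of Lax--Milgram.
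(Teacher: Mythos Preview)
Your argument is correct and is precisely the standard one: Lax--Milgram for well-posedness of \eqref{eq:primal_sens}, then the implicit function theorem applied to the residual $F(\mu,u)=a_\mu(u,\cdot)-l_\mu(\cdot)\in V'$, using that $\partial_u F(\mu,u)=a_\mu(\cdot,\cdot)\colon V\to V'$ is an isomorphism by coercivity. The paper does not give its own proof here but simply refers to \cite{HPUU2009,Tro2010}, and the argument found there is exactly the implicit function theorem route you wrote out (your difference-quotient alternative is a fine self-contained substitute).
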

\begin{proof}
	We refer, e.g., to \cite{HPUU2009,Tro2010} for the proof of this result.
\end{proof}

\subsection{Optimal solution and optimality conditions}
\label{sec:first_order_optimality_conditions}
Existence of an optimal solution to the non-convex problem \eqref{P} follows from \cite[Theorem~1.45]{HPUU2009}.
Using first- and second-order optimality conditions we can characterize local optimal solutions. Throughout the paper a bar indicates (local) optimality.
\begin{proposition}[First-order necessary optimality conditions]
	\label{prop:first_order_opt_cond}
  Let $(\bar u, \bar \mu) \in V \times \Params$ be a local optimal solution to \eqref{P}.
  Moreover, let Assumption~{\rm\ref{asmpt:differentiability}} hold true.
  Then there exists an associated unique Lagrange multiplier $\bar p\in V$ such that the following first-order necessary optimality conditions hold.
	\begin{subequations}
		\label{eq:optimality_conditions}
		\begin{align}
		r_{\bar \mu}^\pr(\bar u)[v] &= 0 &&\text{for all } v \in V,
		\label{eq:optimality_conditions:u}\\
		\partial_u \J(\bar u,\bar \mu)[v] - a_\mu(v,\bar p) &= 0 &&\text{for all } v \in V,
		\label{eq:optimality_conditions:p}\\
		(\partial_\mu \J(\bar u,\bar \mu)+\nabla_{\mu} r^\pr_{\bar\mu}(\bar u)[\bar p]) \cdot (\nu-\bar \mu) &\geq 0 &&\text{for all } \nu \in \Params. 
		\label{eq:optimality_conditions:mu}
		\end{align}	
	\end{subequations}
\end{proposition}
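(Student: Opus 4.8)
The plan is to eliminate the state variable and reduce \eqref{P} to a minimization over the parameter set $\Params$ only, then apply the classical first-order condition for a Fréchet-differentiable functional on a convex set and identify the resulting gradient through an adjoint state. First, condition \eqref{eq:optimality_conditions:u} is nothing but the feasibility requirement $\bar u=\mathcal S(\bar\mu)$, which holds by definition of an optimal pair. Next I would introduce the reduced cost functional $\hat\J\colon\Params\to\R$, $\hat\J(\mu):=\J(\mathcal S(\mu),\mu)$, and observe that \eqref{P} is literally $\min_{\mu\in\Params}\hat\J(\mu)$, so that $\bar\mu$ is a local minimizer of $\hat\J$ over $\Params$. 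By Assumption~\ref{asmpt:differentiability} together with Proposition~\ref{prop:solution_dmu_eta}, the solution map $\mathcal S$ is Fréchet differentiable, hence by the chain rule $\hat\J$ is Fréchet differentiable on $\Params$ with
\[
 d_\mu\hat\J(\mu)\cdot\nu = \partial_\mu\J(u_\mu,\mu)\cdot\nu + \partial_u\J(u_\mu,\mu)[d_\nu u_\mu]\qquad\text{for all }\nu\in\R^P .
\]

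Since $\Params$ is convex, for any $\nu\in\Params$ the segment $\bar\mu+t(\nu-\bar\mu)$ lies in $\Params$ for $t\in[0,1]$ and in a neighbourhood of $\bar\mu$ for small $t>0$; differentiating $t\mapsto\hat\J(\bar\mu+t(\nu-\bar\mu))$ at $t=0^+$ and using local optimality yields the variational inequality
\[
 d_\mu\hat\J(\bar\mu)\cdot(\nu-\bar\mu)\geq 0\qquad\text{for all }\nu\in\Params .
\]
To remove the sensitivity $d_\nu\bar u$, which still depends on the direction $\nu$, I would introduce the adjoint state. Since $a_{\bar\mu}$ is continuous and coercive, the transposed bilinear form $(q,v)\mapsto a_{\bar\mu}(v,q)$ is again continuous and coercive, so by the Lax–Milgram theorem there exists a \emph{unique} $\bar p\in V$ with $a_{\bar\mu}(v,\bar p)=\partial_u\J(\bar u,\bar\mu)[v]$ for all $v\in V$; this is exactly \eqref{eq:optimality_conditions:p} and simultaneously establishes the uniqueness claim for $\bar p$. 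Testing the sensitivity equation \eqref{eq:primal_sens} with $v=\bar p$ gives $\partial_u\J(\bar u,\bar\mu)[d_\nu\bar u]=a_{\bar\mu}(d_\nu\bar u,\bar p)=\partial_\mu r^\pr_{\bar\mu}(\bar u)[\bar p]\cdot\nu$, and substituting this into the variational inequality above, after rewriting the partial derivatives as the corresponding gradients, produces \eqref{eq:optimality_conditions:mu}.

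Most of this is routine bookkeeping; the only point that needs genuine care is the Fréchet differentiability of the composition $\mu\mapsto\J(\mathcal S(\mu),\mu)$ and the chain-rule formula, which rely on Proposition~\ref{prop:solution_dmu_eta} and the (twice) continuous differentiability in Assumption~\ref{asmpt:differentiability} — and, implicitly, on differentiability of $\J$ in $u$, which is immediate from its quadratic form $\J(u,\mu)=\Theta(\mu)+j_\mu(u)+k_\mu(u,u)$. I do not expect any essential obstacle beyond that. As a more hands-on alternative one could instead work with the Lagrangian $\LL(u,\mu,p)=\J(u,\mu)+r^\pr_\mu(u)[p]$ and read off \eqref{eq:optimality_conditions:u}--\eqref{eq:optimality_conditions:mu} as the stationarity conditions $\partial_p\LL=0$, $\partial_u\LL=0$ and $\partial_\mu\LL\cdot(\nu-\bar\mu)\geq 0$, which is the route of \cite{HPUU2009,Tro2010}.
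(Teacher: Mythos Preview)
Your argument is correct and is precisely the standard reduction-plus-adjoint approach that the paper defers to by citing \cite[Cor.~1.3]{HPUU2009}; in fact you spell out more detail than the paper itself, which gives no proof beyond the reference. The alternative Lagrangian viewpoint you mention at the end is exactly the route taken in \cite{HPUU2009}, so both variants coincide with the paper's intended argument.
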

\begin{proof} We refer to \cite[Cor. 1.3]{HPUU2009} for a proof. 
\end{proof}
Note that \eqref{eq:optimality_conditions:u} corresponds to the state equation \eqref{P.state}.
From \eqref{eq:optimality_conditions:p} we deduce the so-called \emph{adjoint -- or dual -- equation}  
\begin{align}
a_\mu(q, p_\mu) = \partial_u \J(u_\mu, \mu)[q]
= j_\mu(q) + 2 k_\mu(q, u_\mu)&&\text{for all } q \in V,
\label{eq:dual_solution}
\end{align}
with solution $p_{\mu} \in V$ for a fixed $\mu \in \Params$, given the solution $u_\mu \in V$ to the state equation \eqref{P.state}.
We note that \eqref{eq:dual_solution} holds for quadratic $\J$ as in \eqref{P.argmin}. 
From \eqref{eq:optimality_conditions:p} we observe that the variable $\bar p$ of the optimal triple solves the dual equation \eqref{eq:dual_solution} for $\bar \mu$.
Similarly to the primal solution, we introduce the dual solution map $\mathcal A:\Params \to V$, $\mu \mapsto p_\mu := \mathcal A(\mu)$,
where $p_\mu$ is the solution of \eqref{eq:dual_solution} for the parameter $\mu$. Note that $\mathcal A$ is well-defined, because the bilinear form $a_{\mu}(\cdot \, , \cdot)$ is continuous and coercive. Moreover, $\bar p = p_{\bar \mu}$.
For given $u, p \in V$, we also introduce the dual residual $r_\mu^\du(u, p) \in V'$ associated with \eqref{eq:dual_solution} by
\begin{align}
r_\mu^\du(u, p)[q] := j_\mu(q) + 2k_\mu(q, u) - a_\mu(q, p)&&\text{for all }q \in V.
\label{eq:dual_residual}
\end{align}
Furthermore, from the dual equation \eqref{eq:dual_solution}, we obtain the following formulation for the dual sensitivities.
\begin{proposition}[Fr\'echet derivative of the dual solution map]
	\label{prop:dual_solution_dmu_eta}
	Considering the dual solution map $\mathcal A:\Params \to V$, $\mu \mapsto p_\mu= \mathcal{A}(\mu)$, we denote its directional derivative w.r.t.~a direction $\nu \in \Params$ by $d_{\nu} p_\mu= \mathcal{A}'(\mu)\cdot \nu  \in V$, which is given as the solution of
	\begin{equation} \label{eq:dual_sens}
	\begin{split}
	a_\mu(q, d_{\nu} p_\mu) &= -\partial_\mu a_\mu(q, p_\mu)\cdot \nu + d_\mu \partial_u \J(u_\mu, \mu)[q] \cdot \nu\\
	&
	= \partial_\mu r_\mu^\du(u_\mu, p_\mu)[q] \cdot \nu + 2 k_\mu(q, d_{\nu}u_\mu)
	\end{split}
	\end{equation}
	for all $q \in V$, where the latter equality holds for quadratic $\J$ as in \eqref{P.argmin}.
\end{proposition}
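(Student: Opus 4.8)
The plan is to argue by an implicit-function-theorem argument, differentiating the defining identity of the dual equation~\eqref{eq:dual_solution} and carefully tracking that the parameter enters both explicitly and through the primal solution $u_\mu = \mathcal{S}(\mu)$, whose sensitivity $d_\nu u_\mu$ is already characterized in Proposition~\ref{prop:solution_dmu_eta}. Concretely, for fixed test function $q \in V$, collect the weak dual equation into an operator $F \colon \Params \times V \to V'$ with $F(\mu, p)[q] := a_\mu(q,p) - \partial_u \J(\mathcal{S}(\mu),\mu)[q]$, so that $F(\mu, p_\mu) = 0$ by definition of $\mathcal{A}$. By Assumption~\ref{asmpt:parameter_separable} and Assumption~\ref{asmpt:differentiability}, together with the differentiability of $\mathcal{S}$ from Proposition~\ref{prop:solution_dmu_eta} and the smooth (bi)linear dependence of $a_\mu$, $j_\mu$, $k_\mu$ on their $V$-arguments, the map $F$ is continuously Fr\'echet differentiable in $(\mu, p)$. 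Its partial derivative in $p$ is $v \mapsto a_\mu(\cdot, v)$, which by continuity and coercivity of $a_\mu$ is a boundedly invertible operator $V \to V'$ (Lax--Milgram). The implicit function theorem then gives that $\mathcal{A}$ is continuously Fr\'echet differentiable on $\Params$ and that $d_\nu p_\mu$ solves the equation obtained by differentiating $F(\mu, p_\mu) = 0$ in direction $\nu$.

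Carrying out this differentiation explicitly yields the two claimed formulas. Differentiating $a_\mu(q, p_\mu) = \partial_u \J(u_\mu,\mu)[q]$ with respect to $\mu$ in direction $\nu$, with $q$ fixed, the product rule on the left produces $\partial_\mu a_\mu(q, p_\mu)\cdot\nu + a_\mu(q, d_\nu p_\mu)$, while the chain rule on the right — since $u_\mu$ depends on $\mu$ — produces the total derivative $d_\mu \partial_u \J(u_\mu,\mu)[q]\cdot\nu$. Rearranging gives the first identity in~\eqref{eq:dual_sens}. For the second, split the total derivative via $\partial_u \J(u,\mu)[q] = j_\mu(q) + 2k_\mu(q,u)$: the explicit $\mu$-part is $(\partial_\mu j_\mu(q) + 2\partial_\mu k_\mu(q,u_\mu))\cdot\nu$, which together with $-\partial_\mu a_\mu(q, p_\mu)\cdot\nu$ is precisely $\partial_\mu r_\mu^\du(u_\mu, p_\mu)[q]\cdot\nu$ by the definition~\eqref{eq:dual_residual} of the dual residual; the remaining contribution, coming from the dependence of $u_\mu$ on $\mu$, equals $\partial_u\bigl(2k_\mu(q,u_\mu)\bigr)[d_\nu u_\mu] = 2k_\mu(q, d_\nu u_\mu)$, using that $\J$ is quadratic so that $k_\mu$ is the only $u$-dependent term and it is bilinear and symmetric, and that $d_\nu u_\mu$ is the primal sensitivity from Proposition~\ref{prop:solution_dmu_eta}.

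Uniqueness of $d_\nu p_\mu$ as a solution of~\eqref{eq:dual_sens} is then immediate from the coercivity of $a_\mu$ via Lax--Milgram, consistent with the characterization delivered by the implicit function theorem; one should also record that the right-hand sides are bounded linear functionals on $V$ (by Assumptions~\ref{asmpt:parameter_separable}--\ref{asmpt:differentiability} and $d_\nu u_\mu \in V$), so that~\eqref{eq:dual_sens} is well-posed. I expect the only genuinely delicate point to be the justification that one may differentiate under the identity — that is, verifying the $C^1$-dependence of $F$ on $(\mu, p)$ needed to invoke the implicit function theorem — which is exactly where Assumption~\ref{asmpt:differentiability} (twice continuous, locally Lipschitz differentiability of the data) and the already-established differentiability of $\mathcal{S}$ enter; the rest is routine product- and chain-rule bookkeeping. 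Alternatively, one can bypass the implicit function theorem and argue directly: show that the difference quotient $(\,p_{\mu + t\nu} - p_\mu\,)/t$ solves, up to a remainder controlled by the Lipschitz bounds of Assumption~\ref{asmpt:differentiability} and the continuity of $\mathcal{S}$, the equation~\eqref{eq:dual_sens}, and pass to the limit $t \to 0$ using coercivity of $a_\mu$ to get convergence in $V$; we follow whichever of the two routes is cleaner, referring to~\cite{HPUU2009,Tro2010} for the underlying differentiability framework.
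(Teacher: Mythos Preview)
Your proposal is correct and provides precisely the implicit-function-theorem argument that underlies the result. The paper itself does not give a proof but simply refers to \cite{HPUU2009,Tro2010}; your write-up supplies the details of the argument those references contain, so there is nothing to compare beyond noting that you have made explicit what the paper leaves to the literature.
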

\begin{proof}
	For a proof we refer to \cite{HPUU2009,Tro2010}, for instance.
\end{proof}
We introduce the reduced functional $\Jhat: \Params\mapsto \mathbb{R}$, $\mu\mapsto \Jhat(\mu) := \J(u_\mu, \mu)= \J( \mathcal S(\mu),\mu)$.
Then problem \eqref{P} is equivalent to the so-called reduced problem
\begin{align}
\min_{\mu \in \Params} \Jhat(\mu).
\tag{$\hat{\textnormal{P}}$}\label{Phat}
\end{align}
In contrast to \eqref{P}, problem \eqref{Phat} has only inequality constraints, but no equality ones. Using definitions and notations from above we can compute first-order derivatives of $\Jhat$ by means of its gradient $\nabla_{\mu}\Jhat: \Params \to \R^P$.
\begin{proposition}[Gradient of $\Jhat$]
	\label{prop:grad_Jhat}
	For given $\mu\in\Params$, the gradient of $\Jhat$, $\nabla_\mu\Jhat: \Params \to \R^P$, is given by
	\begin{align*}
	\nabla_{\mu}\Jhat(\mu) &
	= \nabla_\mu \Theta(\mu) + \nabla_\mu j_\mu(u_\mu)
	+  \nabla_\mu k_\mu(u_\mu, u_\mu) + \nabla_{\mu}r_\mu^\pr(u_{\mu})[p_{\mu}].
	\end{align*}
\end{proposition}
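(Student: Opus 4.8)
The plan is to compute the total derivative of $\Jhat(\mu) = \J(\mathcal{S}(\mu), \mu)$ with respect to each component $\mu_i$ using the chain rule, and then eliminate the primal sensitivity $d_{\mu_i} u_\mu$ by introducing the adjoint state via the Lagrangian reformulation. First I would write, for each $1 \le i \le P$,
\[
d_{\mu_i} \Jhat(\mu) = \partial_{\mu_i} \J(u_\mu, \mu) + \partial_u \J(u_\mu, \mu)[d_{\mu_i} u_\mu],
\]
which is just the definition of the total derivative recalled in Section~\ref{sec:differentiability}, valid since $\J$ and $\mathcal{S}$ are Fréchet differentiable (Assumption~\ref{asmpt:differentiability} and Proposition~\ref{prop:solution_dmu_eta}). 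Here $d_{\mu_i} u_\mu = \mathcal{S}'(\mu)\cdot e_i$ solves the sensitivity equation \eqref{eq:primal_sens} with $\nu = e_i$.

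The key step is to remove the dependence on $d_{\mu_i} u_\mu$, which is expensive to compute (one PDE solve per parameter direction). I would use the adjoint equation \eqref{eq:dual_solution}: testing it with $q = d_{\mu_i} u_\mu \in V$ gives
\[
a_\mu(d_{\mu_i} u_\mu, p_\mu) = \partial_u \J(u_\mu, \mu)[d_{\mu_i} u_\mu].
\]
On the other hand, testing the sensitivity equation \eqref{eq:primal_sens} with $v = p_\mu$ yields
\[
a_\mu(d_{\mu_i} u_\mu, p_\mu) = \partial_\mu r_\mu^\pr(u_\mu)[p_\mu]\cdot e_i = \partial_{\mu_i} r_\mu^\pr(u_\mu)[p_\mu].
\]
Combining these two identities shows $\partial_u \J(u_\mu, \mu)[d_{\mu_i} u_\mu] = \partial_{\mu_i} r_\mu^\pr(u_\mu)[p_\mu]$, so the total derivative becomes
\[
d_{\mu_i} \Jhat(\mu) = \partial_{\mu_i} \J(u_\mu, \mu) + \partial_{\mu_i} r_\mu^\pr(u_\mu)[p_\mu],
\]
which is exactly the $i$-th component of the Lagrangian-based gradient $\partial_\mu \mathcal{L}(u_\mu, \mu, p_\mu) = \partial_\mu \J(u_\mu,\mu) + \nabla_\mu r_\mu^\pr(u_\mu)[p_\mu]$. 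Assembling over $i = 1, \dots, P$ into the gradient vector gives
\[
\nabla_\mu \Jhat(\mu) = \partial_\mu \J(u_\mu, \mu) + \nabla_\mu r_\mu^\pr(u_\mu)[p_\mu].
\]

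Finally, I would expand $\partial_\mu \J(u_\mu, \mu)$ using the explicit quadratic form $\J(u,\mu) = \Theta(\mu) + j_\mu(u) + k_\mu(u,u)$ from \eqref{P.argmin}: differentiating only the explicit $\mu$-dependence (the $u$-dependence is already accounted for by the adjoint term) gives $\partial_\mu \J(u_\mu,\mu) = \nabla_\mu \Theta(\mu) + \nabla_\mu j_\mu(u_\mu) + \nabla_\mu k_\mu(u_\mu, u_\mu)$, which produces the stated formula. I do not expect any serious obstacle here; the only point requiring a little care is being consistent about what "$\partial_\mu$" means when applied to $j_\mu(u_\mu)$ and $k_\mu(u_\mu,u_\mu)$ — namely the partial derivative with $u_\mu$ frozen — and checking that the symmetric bilinear form $k_\mu$ contributes the factor correctly. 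Parameter-separability (Assumption~\ref{asmpt:parameter_separable}) guarantees all these derivatives exist and the expression is well-defined and cheaply assembled, but it is not strictly needed for the identity itself.
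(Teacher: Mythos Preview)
Your proof is correct and follows exactly the route the paper indicates: the paper's own proof merely states that the result follows from \eqref{eq:primal_residual}, \eqref{eq:primal_sens}, \eqref{eq:dual_solution} and \eqref{P.argmin} with a reference to \cite{HPUU2009}, and your argument spells out precisely this adjoint-based elimination of the primal sensitivity via those equations.
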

\begin{proof}
	This follows from \eqref{eq:primal_residual}, \eqref{eq:primal_sens}, \eqref{eq:dual_solution} and \eqref{P.argmin}, cf.~\cite{HPUU2009}.
\end{proof}
\begin{remark}
	The proof of Proposition~{\rm{\ref{prop:grad_Jhat}}} relies on the fact that both $u_\mu$ and $p_\mu$ belong to the same space $V$; cf.~\emph{\cite{HPUU2009}}.
	In particular, for any $\mu\in\Params$, we have $\nabla_\mu \Jhat(\mu) = \nabla_\mu \mathcal L(u_\mu,\mu,p_\mu)$.
\end{remark}
For $\bar\mu$ satisfying the first-order necessary optimality conditions \eqref{eq:optimality_conditions}, we have that $\bar \mu$ is a stationary point of the cost functional $\Jhat$.
Thus, $\bar \mu$ can be either a local minimum, a saddle point or a local maximum of the cost
functional $\Jhat$ (and obviously the same relationship occurs between $(\bar u,\bar \mu)$ and $\J$).
We thus consider second-order sufficient optimality conditions in order to characterize local minima of the functional $\Jhat$, requiring its hessian.
\begin{proposition}[Hessian of $\Jhat$]
	\label{prop:hessian_Jhat}
	The hessian of $\Jhat$, $\HHhat_\mu := \HH_\mu \Jhat: \Params \to \R^{P \times P}$, is determined by its application to a direction $\nu\in \mathbb{R}^P$, given by
	\begin{align*}
	\HHhat_\mu(\mu) \cdot \nu
	= \nabla_\mu\Big(&\partial_u \J(u_\mu, \mu)[d_{\nu}u_\mu] + r_\mu^\pr(u_\mu)[d_{\nu}p_\mu] - a_\mu(d_{\nu}u_\mu, p_\mu)
	\\
	&+\big(\partial_\mu \J(u_\mu, \mu)+ \partial_\mu r_\mu^\pr(u_\mu)[p_\mu]\big)\cdot \nu\Big),
	\end{align*}
	where $u_\mu, p_\mu \in V$ denote the primal and dual solutions, respectively. 
	For a quadratic $\J$ as in \eqref{P.argmin} the above formula simplifies to
	\begin{align}
	\HHhat_\mu(\mu) \cdot \nu
	= & \nabla_\mu\Big(j_\mu(d_{\nu}u_\mu) + 2k_\mu(d_{\nu}u_\mu, u_\mu) + l_\mu(d_{\nu}p_\mu) -a_\mu(u_\mu, d_{\nu}p_\mu) \notag\\
	& - a_\mu(d_{\nu}u_\mu, p_\mu) 
	+\big(\partial_\mu \J(u_\mu, \mu)+ \partial_\mu l_\mu(p_\mu)- \partial_\mu a_\mu(u_\mu, p_\mu)\big)\cdot \nu\Big).
	\notag
	\end{align}
\end{proposition}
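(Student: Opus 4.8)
The plan is to derive the Hessian--vector product by differentiating the gradient of Proposition~\ref{prop:grad_Jhat} once more with respect to $\mu$, exploiting the Lagrangian structure of the problem so that the second--order sensitivities of $u_\mu$ and $p_\mu$ drop out.

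First I would reduce the claim to the computation of a plain gradient. By Assumption~\ref{asmpt:differentiability} the functional $\Jhat$ is twice continuously Fr\'echet differentiable w.r.t.~$\mu$, so for any fixed direction $\nu\in\R^P$ the scalar map $\mu\mapsto\nabla_\mu\Jhat(\mu)\cdot\nu$ is continuously differentiable, and a componentwise comparison yields the identity $\HHhat_\mu(\mu)\cdot\nu=\nabla_\mu\big(\nabla_\mu\Jhat(\mu)\cdot\nu\big)$. It therefore suffices to show that the expression inside the outer $\nabla_\mu$ in the statement coincides, as a function of $\mu$, with $\nabla_\mu\Jhat(\mu)\cdot\nu$.

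To see this I would start from the chain rule $\nabla_\mu\Jhat(\mu)\cdot\nu=\partial_u\J(u_\mu,\mu)[d_\nu u_\mu]+\partial_\mu\J(u_\mu,\mu)\cdot\nu$, with the primal sensitivity $d_\nu u_\mu$ of Proposition~\ref{prop:solution_dmu_eta}, and then add the two terms that vanish identically for every $\mu\in\Params$: the quantity $r_\mu^\pr(u_\mu)[d_\nu p_\mu]$, which is zero by the state equation \eqref{P.state} (the primal residual \eqref{eq:primal_residual} vanishes at $u_\mu$), and $-a_\mu(d_\nu u_\mu,p_\mu)$, which cancels $\partial_u\J(u_\mu,\mu)[d_\nu u_\mu]$ upon testing the dual equation \eqref{eq:dual_solution} with $q=d_\nu u_\mu$. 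What remains is $\big(\partial_\mu\J(u_\mu,\mu)+\partial_\mu r_\mu^\pr(u_\mu)[p_\mu]\big)\cdot\nu$, which equals $\nabla_\mu\Jhat(\mu)\cdot\nu$ by Proposition~\ref{prop:grad_Jhat} (since $\partial_\mu\J(u_\mu,\mu)=\nabla_\mu\Theta(\mu)+\nabla_\mu j_\mu(u_\mu)+\nabla_\mu k_\mu(u_\mu,u_\mu)$ and $\partial_\mu r_\mu^\pr(u_\mu)[p_\mu]=\nabla_\mu r_\mu^\pr(u_\mu)[p_\mu]$). Combining this with the previous step gives the first displayed formula. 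The quadratic specialization then follows by substituting $\partial_u\J(u_\mu,\mu)[d_\nu u_\mu]=j_\mu(d_\nu u_\mu)+2k_\mu(d_\nu u_\mu,u_\mu)$ (using symmetry of $k_\mu$) and expanding the residual terms via \eqref{eq:primal_residual} as $r_\mu^\pr(u_\mu)[d_\nu p_\mu]=l_\mu(d_\nu p_\mu)-a_\mu(u_\mu,d_\nu p_\mu)$ and $\partial_\mu r_\mu^\pr(u_\mu)[p_\mu]=\partial_\mu l_\mu(p_\mu)-\partial_\mu a_\mu(u_\mu,p_\mu)$.

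There is no real analytic difficulty here; the only point requiring care is the bookkeeping of the $\mu$--dependence. The reason to keep the directional derivative in the redundant, \emph{Lagrangian-completed} form, rather than simply as $(\partial_\mu\J(u_\mu,\mu)+\partial_\mu r_\mu^\pr(u_\mu)[p_\mu])\cdot\nu$, is that the outer $\nabla_\mu$ can then be expanded by the chain rule -- treating $u_\mu$, $p_\mu$ and the sensitivities $d_\nu u_\mu$, $d_\nu p_\mu$ (from Propositions~\ref{prop:solution_dmu_eta} and \ref{prop:dual_solution_dmu_eta}) as $\mu$--dependent -- in such a way that every contribution involving second--order sensitivities cancels by virtue of the state equation, the dual equation and the sensitivity equations \eqref{eq:primal_sens}, \eqref{eq:dual_sens}. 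Verifying that cancellation explicitly, with attention to signs, is the one step I would carry out most carefully, but it remains entirely elementary.
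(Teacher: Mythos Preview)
Your argument is correct and is precisely the Lagrangian-based derivation that the paper defers to \cite{HPUU2009}; the paper itself gives no details beyond that reference and ``direct computation'', so your write-up is in fact more explicit than the original while following the same route. The one point worth stating slightly more sharply is the meaning of the outer $\nabla_\mu$: as you note in your last paragraph, the partial derivative with respect to the explicit $\mu$-dependence coincides with the total derivative because the derivatives of the bracketed expression with respect to $u_\mu$, $p_\mu$, $d_\nu u_\mu$, $d_\nu p_\mu$ vanish by \eqref{P.state}, \eqref{eq:dual_solution}, \eqref{eq:primal_sens} and \eqref{eq:dual_sens}, which is exactly what makes the formula useful.
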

\begin{proof}
	See, e.g., \cite{HPUU2009} for the first part. The second one follows from a direct computation.
\end{proof}
\begin{proposition}[Second-order sufficient optimality conditions]\label{prop:second_order}
	Let Assumption~{\rm{\ref{asmpt:differentiability}}} hold true.
	Suppose that $\bar \mu\in \Params$ satisfies the first-order necessary optimality conditions \eqref{eq:optimality_conditions}.
	If $\HHhat_\mu(\bar \mu)$ is positive definite on the \emph{critical cone} $\mathcal C(\bar\mu)$ at $\bar\mu\in\Params$, i.e.,
	if $\nu \cdot(\HHhat_\mu(\bar \mu)\cdot \nu) > 0$ for all $\nu\in\mathcal C(\bar\mu)\setminus\{0\}$, with
	\begin{align*}
	\mathcal C(\bar\mu):= \big\{\nu\in\mathbb{R}^P\,\big|\, \exists \mu\in\Params,\,c_1>0: \nu = c_1(\mu-\bar \mu),\, \nabla_\mu \Jhat(\bar\mu)\cdot \nu = 0  \big\},
	\end{align*}
	then $\bar \mu$ is a strict local minimum of \eqref{Phat}.
\end{proposition}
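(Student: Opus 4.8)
The plan is to argue by contradiction along the classical lines for second-order sufficient conditions, but carried out directly for the reduced functional $\Jhat$. Note first that, under Assumption~\ref{asmpt:differentiability} and the regularity of the solution maps $\mathcal S$ and $\mathcal A$ from Propositions~\ref{prop:solution_dmu_eta} and~\ref{prop:dual_solution_dmu_eta}, the functional $\Jhat$ is twice continuously differentiable on $\Params$ with gradient and hessian given by Propositions~\ref{prop:grad_Jhat} and~\ref{prop:hessian_Jhat}; the assumed local Lipschitz continuity of the second derivatives then yields a second-order Taylor expansion of $\Jhat$ around $\bar\mu$ with a cubic remainder. Now suppose $\bar\mu$ were \emph{not} a strict local minimum of \eqref{Phat}. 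Then there is a sequence $(\mu_n)_n \subset \Params$ with $\mu_n \neq \bar\mu$, $\mu_n \to \bar\mu$ and $\Jhat(\mu_n) \leq \Jhat(\bar\mu)$. Set $t_n := \|\mu_n - \bar\mu\| \to 0$ and $d_n := (\mu_n - \bar\mu)/t_n$, so $\|d_n\| = 1$; after passing to a subsequence (not relabeled) we may assume $d_n \to d$ with $\|d\| = 1$.

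Next I would verify that $d \in \mathcal C(\bar\mu)\setminus\{0\}$. Since $\Params$ is the box $\{\mu : \mu_\mathsf{a} \leq \mu \leq \mu_\mathsf{b}\}$ and $\mu_n = \bar\mu + t_n d_n \in \Params$, componentwise inspection gives $d_{n,i} \geq 0$ whenever $\bar\mu_i = \mu_{\mathsf{a},i}$ and $d_{n,i} \leq 0$ whenever $\bar\mu_i = \mu_{\mathsf{b},i}$; passing to the limit, $d$ lies in the radial cone of feasible directions at $\bar\mu$, which for a box is a closed polyhedral cone. Hence $\bar\mu + s d \in \Params$ for all small $s > 0$, so $d = c_1(\mu - \bar\mu)$ with $\mu := \bar\mu + s d \in \Params$ and $c_1 := 1/s > 0$. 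To see $\nabla_\mu \Jhat(\bar\mu)\cdot d = 0$, combine two estimates: on one hand, applying the first-order condition \eqref{eq:optimality_conditions:mu} (equivalently, stationarity of $\Jhat$ at $\bar\mu$) with $\nu = \mu_n$ gives $t_n\, \nabla_\mu\Jhat(\bar\mu)\cdot d_n \geq 0$, hence $\nabla_\mu\Jhat(\bar\mu)\cdot d \geq 0$; on the other hand, the first-order Taylor expansion together with $\Jhat(\mu_n) \leq \Jhat(\bar\mu)$ yields $\nabla_\mu\Jhat(\bar\mu)\cdot d_n + o(1) \leq 0$, hence $\nabla_\mu\Jhat(\bar\mu)\cdot d \leq 0$. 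Thus $d \in \mathcal C(\bar\mu)\setminus\{0\}$.

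Finally I would exploit the second-order expansion
\[
\Jhat(\mu_n) = \Jhat(\bar\mu) + \nabla_\mu\Jhat(\bar\mu)\cdot(\mu_n-\bar\mu) + \tfrac12\,(\mu_n-\bar\mu)\cdot\big(\HHhat_\mu(\bar\mu)\cdot(\mu_n-\bar\mu)\big) + o(t_n^2).
\]
Inserting this into $\Jhat(\mu_n) - \Jhat(\bar\mu) \leq 0$, discarding the nonnegative linear term $\nabla_\mu\Jhat(\bar\mu)\cdot(\mu_n-\bar\mu) \geq 0$ (again by \eqref{eq:optimality_conditions:mu} since $\mu_n \in \Params$), and dividing by $t_n^2/2 > 0$, we obtain $d_n\cdot(\HHhat_\mu(\bar\mu)\cdot d_n) + o(1) \leq 0$, and letting $n \to \infty$ gives
\[
d\cdot\big(\HHhat_\mu(\bar\mu)\cdot d\big) \leq 0, \qquad d \in \mathcal C(\bar\mu)\setminus\{0\},
\]
which contradicts the assumed positive definiteness of $\HHhat_\mu(\bar\mu)$ on $\mathcal C(\bar\mu)$. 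Hence $\bar\mu$ is a strict local minimum of \eqref{Phat}.

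The step I expect to be the main obstacle is ensuring that the limiting direction $d$ genuinely belongs to the radial critical cone $\mathcal C(\bar\mu)$ as defined (and not merely to its closure or to a tangent cone): this is precisely where the box — i.e., polyhedral — structure of $\Params$ is used, via the fact that the radial cone of feasible directions at $\bar\mu$ is closed, so that limits of feasible directions remain feasible. A secondary, more routine point is the justification of the $o(t_n^2)$ Taylor remainder for $\Jhat$, which follows by the chain rule from Assumption~\ref{asmpt:differentiability} together with the differentiability of $\mathcal S$ and $\mathcal A$; if only $C^2$ regularity of $\Jhat$ is invoked, the expansion holds with a generic $o(t_n^2)$ remainder, which is all that the argument requires.
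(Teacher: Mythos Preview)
Your proof is correct and self-contained; the paper itself does not give a proof but merely cites \cite{CasTr15,NW06}. The argument you present is precisely the classical contradiction argument found in those references (in particular \cite[Theorem~12.6]{NW06}), specialized to the box-constrained reduced problem \eqref{Phat}. Your handling of the one nontrivial point --- that the limit direction $d$ lies in the \emph{radial} cone $\mathcal C(\bar\mu)$ as defined, not merely in its closure --- is exactly right: polyhedrality of $\Params$ ensures that the cone of feasible directions is closed, so limits of feasible directions remain feasible; this is the well-known reason why no gap between second-order necessary and sufficient conditions arises for polyhedral constraints.
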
 
\begin{proof}
	For this result we refer to \cite{CasTr15,NW06}, for instance.
\end{proof}
\begin{remark}
For so-called small residual problems (i.e, $\|\partial_u \J(\bar u,\bar \mu)\|_V$ is small) one can ensure that the second-order sufficient optimality conditions hold. The proof is analogous to \cite[Section 3.3]{Vol2001}.
\end{remark}

\section{High dimensional discretization and model order reduction}
\label{sec:mor}
To discretize the optimization problem (\ref{P}) and the corresponding derivatives of the cost functional 
we use a classical Ritz-Galerkin projection onto a finite, but possibly high dimensional finite element space $V_h \subset V$. 
Based on this FOM we then define a ROM using the reduced basis method with possibly different reduced primal and dual 
state spaces as well as different reduced spaces for the primal and dual sensitivity equations. 
Since the resulting ROM will in general not be equivalent to a Ritz-Galerkin projection of the FOM onto a reduced space $V_\red \subset V_h$,
we follow the approach from \cite{KMOSV20}, to define a non-conforming dual (NCD) corrected ROM.  

\subsection{Full order model} 
\label{sec:problem_fom}
Assuming $V_h \subset V$ to be a finite-dimensional subspace, we define a Ritz-Galerkin projection of (\ref{P}) onto $V_h$ by
considering, for each $\mu \in \Params$, the solution $u_{h, \mu} \in V_h$ of the \emph{discrete primal equation}
\begin{align}
\bformd(u_{h, \mu}, v_h) = \lformd(v_h) &&\text{for all } v_h \in V_h,
\label{eq:state_h}
\end{align}
which gives $\resd^\pr(u_{h, \mu})[v_h] = 0$ for all $v_h \in V_h$, $\mu \in \Params$.
We also define, for each $\mu \in \Params$, the solution $p_{h, \mu} \in V_h$ of the \emph{discrete dual equation}
\begin{align}
\bformd(q_h, p_{h, \mu}) = \partial_u \J(u_{h, \mu}, \mu)[q_h] = \jformd(q_h) + 2 \kformd(q_h, u_{h, \mu}) &&\forall q_h \in V_h,
\label{eq:dual_solution_h}
\end{align}
which results in $\resd^\du(u_{h, \mu}, p_{h, \mu})[q_h] = 0$ for all $q_h \in V_h$, $\mu \in \Params$. Similarly, the \emph{discrete primal sensitivity equations} for solving for $d_{\nu} u_{h, \mu} \in V_h$ as well as
\emph{discrete dual sensitivity equations} for solving for $d_{\nu} p_{h, \mu} \in V_h$ at any direction $\nu \in \Params$
follow directly analogue to Propositions \ref{prop:solution_dmu_eta} and \ref{prop:dual_solution_dmu_eta}.
Furthermore, instead of $\Jhat$ we define the \emph{discrete reduced functional}
\begin{align}
\Jhat_h(\mu) := \J(u_{h, \mu}, \mu) = \mathcal L(u_{h,\mu},\mu,p_h) && \text{for all } p_h\in V_h,
\label{eq:Jhat_h}
\end{align}
where $u_{h, \mu} \in V_h$ is the unique solution of \eqref{eq:state_h}, and we formulate the discrete optimization problem 
\begin{align}
\min_{\mu \in \Params} \Jhat_h(\mu).
\tag{$\hat{\textnormal{P}}_h$}\label{Phat_h}
\end{align}
Further, $\bar\mu_h$ denotes a locally optimal solution to \eqref{Phat_h} satisfying first- and second-order optimality conditions. 
\begin{remark}
	Since $u_{h,\mu}$ and $p_{h,\mu}$ belong to the same space $V_h$, Propositions~{\rm{\ref{prop:first_order_opt_cond}-{\rm\ref{prop:grad_Jhat}},{\rm\ref{prop:hessian_Jhat}}-\ref{prop:second_order}}} from Section~{\rm{\ref{sec:problem_fom}}} hold for the FOM as well, with all quantities replaced by their discrete counterparts.
\end{remark}
Analogously to Proposition \ref{prop:hessian_Jhat} we define a shorthand for the hessian of the discrete reduced functional as $\hat{\HH}_{h, \mu} := \HH_\mu \Jhat_h: \Params \to \R^{P \times P}$.
As usual in the context of RB methods, we eliminate the issue of ``truth'' by assuming that the high dimensional space $V_h$ is accurate enough to approximate the true solution.

\begin{assumption}[This is the ``truth'']
	\label{asmpt:truth}
	We assume that the primal discretization error $\|u_\mu - u_{h, \mu}\|$, the dual error $\|p_\mu - p_{h, \mu}\|$, 
	the primal sensitivity errors $\|d_{\mu_i} u_\mu - d_{\mu_i} u_{h, \mu}\|$ and the dual sensitivity errors $\|d_{\mu_i} p_\mu - d_{\mu_i} p_{h, \mu}\|$
	are negligible for all $\mu \in \Params$, $1 \leq i \leq P$.
\end{assumption}

To define a suitable ROM for the optimality system, we assume that we have computed problem adapted RB spaces $V_\red^\pr, V_\red^\du \subset V_h$, 
the construction of which is detailed in Section~\ref{sec:construct_RB}.
We stress here that $V_\red^\pr$ and $V_\red^\du$ might not coincide,
which implies the use of the NCD-corrected approach for reducing the optimality system \eqref{eq:optimality_conditions}.

\subsection{NCD-corrected reduced order model}
\label{sec:rom}
Given problem adapted RB spaces $V_\red^\pr, V_\red^\du \subset V_h$ of low dimension $n := \dim V_\red^\pr$ and $m := \dim V_\red^\du$ we obtain the reduced versions for the optimality system as follows:
\begin{subequations}
	\label{eq:optimality_conditionsRB}
	\begin{itemize}
		\item RB approximation for \eqref{eq:optimality_conditions:u}: For each $\mu \in \Params$ the primal variable $u_{\red, \mu} \in V_\red^\pr$ 
		of the \emph{RB approximate primal equation} is defined through
		\begin{align}
		\bformd(u_{\red, \mu}, v_\red) = \lformd(v_\red) &\qquad \text{for all } v_\red \in V_\red^\pr.
		\label{eq:state_red}
		\end{align}
		\item RB approximation for \eqref{eq:optimality_conditions:p}: For each $\mu \in \Params$, $u_{\red, \mu} \in V_\red^\pr$ the dual/adjoint variable $p_{\red, \mu} \in V_\red^\du$ satisfies the \emph{RB approximate dual equation} 
		\begin{align}
		\bformd(q_\red, p_{\red, \mu}) = \partial_u \J(u_{\red, \mu}, \mu)[q_\red] = \jformd(q_\red) + 2 \kformd(q_\red, u_{\red, \mu}) &&\forall q_\red \in V_\red^\du.
		\label{eq:dual_solution_red}
		\end{align}
	\end{itemize}
\end{subequations}
Analogously to Proposition~\ref{prop:solution_dmu_eta},
we define the \emph{RB solution map} $\mathcal{S}_\red:\Params \to V_\red^\pr$ by $\mu \mapsto u_{\red, \mu}=:\mathcal{S}_\red(\mu)$ and analogously to
Proposition~\ref{prop:dual_solution_dmu_eta} the \emph{RB dual solution map} $\mathcal{A}_\red:\Params \to V_\red^\du$ by $\mu \mapsto p_{\red, \mu}=:\mathcal{A}_\red(\mu)$,
where $u_{\red, \mu}$ and $p_{\red, \mu}$ denote the primal and dual reduced solutions of \eqref{eq:state_red} and \eqref{eq:dual_solution_red}, respectively.
Note that, in general, \eqref{eq:dual_solution_red} is not the dual equation with respect to the optimization problem \eqref{eq:Jhat_red_corected},
cf.~\cite[Section~1.6.4]{HPUU2009}, which would only be true if $V^\du_\red = V^\pr_\red$.

There exist several ways to approximate \eqref{Phat_h} in a ROM.  The standard way is to simply replace all discretized quantities in the FOM by their respective reduced ones.
However, if the reduced primal and dual RB spaces do not coincide, this approach results in inexact gradient and hessian information of the model.
In \cite{KMOSV20}, it was shown that this also results in a loss of robustness in the optimization method.
Hence, we use a modified approach from \cite{KMOSV20}, i.e.~we define the \emph{NCD-corrected RB reduced functional} by 
\begin{align}
\cJhatn(\mu) := \mathcal{L}(u_{\red,\mu},\mu,p_{\red,\mu}) = \J(u_{\red, \mu}, \mu) + r_\mu^\pr(u_{\red,\mu})[p_{\red,\mu}]
\label{eq:Jhat_red_corected}
\end{align} 
with $u_{\red, \mu} \in V_\red^\pr$ and $p_{\red, \mu} \in V_\red^\du$ being the solutions of \eqref{eq:state_red} and \eqref{eq:dual_solution_red} for $\mu\in\Params$, respectively.
We then consider the \emph{RB reduced optimization problem} of finding a locally optimal solution $\bar \mu_\red$ of
\begin{align}
\min_{\mu \in \Params} \cJhatn(\mu).
\tag{$\hat{\textnormal{P}}_\red$}\label{Phat_\red}
\end{align}

As in Section \ref{sec:first_order_optimality_conditions} we require the gradient and hessian of $\cJhatn$, which
can be computed following \cite[Section~1.6.2]{HPUU2009}. 
\begin{proposition}[Gradient of the NCD-corrected RB reduced functional]
	\label{prop:true_corrected_reduced_gradient_adj}
	The $i$-th component of the true gradient of $\cJhatn$ is given by
	\begin{align*}
	\big(\nabla_\mu \cJhatn(\mu)\big)_i & = \partial_{\mu_i}\J(u_{\red,\mu},\mu) + \partial_{\mu_i}r_\mu^\pr(u_{\red,\mu})[p_{\red,\mu}+w_{\red,\mu}] \\ & \quad - \partial_{\mu_i}r^\du_\mu (u_{\red,\mu},p_{\red,\mu})[z_{\red,\mu}],
	\end{align*}
	where $u_{\red, \mu} \in V_\red^\pr$ and $p_{\red, \mu} \in V_\red^\du$ denote the RB approximate primal and dual solutions of \eqref{eq:state_red} and \eqref{eq:dual_solution_red}, $z_{\red,\mu} \in V_\red^\du$ solves
	\begin{equation}
	\label{A2:z_eq}
	a_\mu(z_{\red,\mu},q) = -r_\mu^\pr(u_{\red,\mu})[q] \quad \forall q\in V^\du_r
	\end{equation}
	and $w_{\red,\mu} \in V_\red^\pr$ solves
	\begin{equation}
	\label{A2:w_eq}
	a_\mu(v,w_{\red,\mu}) = r_\mu^\du(u_{\red,\mu},p_{\red,\mu})[v]-2k_\mu(z_{\red,\mu},v), \quad \forall v\in V^\pr_r.
	\end{equation}
\end{proposition}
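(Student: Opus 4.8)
The plan is to differentiate the identity $\cJhatn(\mu) = \LL(u_{\red,\mu},\mu,p_{\red,\mu})$ from \eqref{eq:Jhat_red_corected} by the chain rule, viewing $u_{\red,\mu}=\mathcal S_\red(\mu)$ and $p_{\red,\mu}=\mathcal A_\red(\mu)$ as $\mu$-dependent, and then to remove the two resulting sensitivity contributions with the help of $z_{\red,\mu}$ and $w_{\red,\mu}$. Since $\LL(u,\mu,p)=\J(u,\mu)+r_\mu^\pr(u)[p]$, the derivative of $\LL$ in its first argument along $\varphi\in V$ equals $\partial_u\J(u,\mu)[\varphi]-a_\mu(\varphi,p)$, and in its third argument along $\psi\in V$ equals $r_\mu^\pr(u)[\psi]$, so the chain rule gives, for $1\le i\le P$,
\begin{align*}
\partial_{\mu_i}\cJhatn(\mu) ={}& \partial_{\mu_i}\J(u_{\red,\mu},\mu) + \partial_{\mu_i}r_\mu^\pr(u_{\red,\mu})[p_{\red,\mu}] \\
&+ \big(\partial_u\J(u_{\red,\mu},\mu)[d_{\mu_i}u_{\red,\mu}] - a_\mu(d_{\mu_i}u_{\red,\mu},p_{\red,\mu})\big) + r_\mu^\pr(u_{\red,\mu})[d_{\mu_i}p_{\red,\mu}].
\end{align*}
The key feature of the non-conforming setting is that neither of the last two groups vanishes: $d_{\mu_i}u_{\red,\mu}\in V_\red^\pr$ is not an admissible test function in the reduced dual equation \eqref{eq:dual_solution_red}, and $d_{\mu_i}p_{\red,\mu}\in V_\red^\du$ is not admissible in the reduced primal equation \eqref{eq:state_red}.

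Next I would record the reduced sensitivity equations obtained by differentiating \eqref{eq:state_red} and \eqref{eq:dual_solution_red} in $\mu_i$, namely $a_\mu(d_{\mu_i}u_{\red,\mu},v_\red)=\partial_{\mu_i}r_\mu^\pr(u_{\red,\mu})[v_\red]$ for all $v_\red\in V_\red^\pr$ and $a_\mu(q_\red,d_{\mu_i}p_{\red,\mu})=\partial_{\mu_i}r_\mu^\du(u_{\red,\mu},p_{\red,\mu})[q_\red]+2k_\mu(q_\red,d_{\mu_i}u_{\red,\mu})$ for all $q_\red\in V_\red^\du$. To eliminate $r_\mu^\pr(u_{\red,\mu})[d_{\mu_i}p_{\red,\mu}]$, I test \eqref{A2:z_eq} with $q=d_{\mu_i}p_{\red,\mu}\in V_\red^\du$, obtaining $r_\mu^\pr(u_{\red,\mu})[d_{\mu_i}p_{\red,\mu}]=-a_\mu(z_{\red,\mu},d_{\mu_i}p_{\red,\mu})$, and then test the dual sensitivity equation with $q_\red=z_{\red,\mu}\in V_\red^\du$ to get $r_\mu^\pr(u_{\red,\mu})[d_{\mu_i}p_{\red,\mu}]=-\partial_{\mu_i}r_\mu^\du(u_{\red,\mu},p_{\red,\mu})[z_{\red,\mu}]-2k_\mu(z_{\red,\mu},d_{\mu_i}u_{\red,\mu})$. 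Using the quadratic form of $\J$ in \eqref{P.argmin} and definition \eqref{eq:dual_residual}, the first bracket in the chain-rule expansion equals $r_\mu^\du(u_{\red,\mu},p_{\red,\mu})[d_{\mu_i}u_{\red,\mu}]$, so at this stage $\partial_{\mu_i}\cJhatn(\mu)$ has been reduced to the explicit $\mu$-terms, the quantity $-\partial_{\mu_i}r_\mu^\du(u_{\red,\mu},p_{\red,\mu})[z_{\red,\mu}]$, and the leftover $r_\mu^\du(u_{\red,\mu},p_{\red,\mu})[d_{\mu_i}u_{\red,\mu}]-2k_\mu(z_{\red,\mu},d_{\mu_i}u_{\red,\mu})$.

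Finally I eliminate this leftover with $w_{\red,\mu}$: testing \eqref{A2:w_eq} with $v=d_{\mu_i}u_{\red,\mu}\in V_\red^\pr$ rewrites it as $a_\mu(d_{\mu_i}u_{\red,\mu},w_{\red,\mu})$, and testing the reduced primal sensitivity equation with $v_\red=w_{\red,\mu}\in V_\red^\pr$ turns this into $\partial_{\mu_i}r_\mu^\pr(u_{\red,\mu})[w_{\red,\mu}]$; combining it with the earlier $p_{\red,\mu}$-term into $\partial_{\mu_i}r_\mu^\pr(u_{\red,\mu})[p_{\red,\mu}+w_{\red,\mu}]$ yields exactly the asserted formula. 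I expect the only real obstacle to be bookkeeping — keeping track of which reduced space each test function lies in, since the cancellations are available only for test functions in $V_\red^\pr$ (for the residual $r_\mu^\pr(u_{\red,\mu})[\cdot]$ and the primal sensitivity equation) and in $V_\red^\du$ (for the reduced dual equation and its sensitivity), and the auxiliary problems \eqref{A2:z_eq}, \eqref{A2:w_eq} are designed precisely so that the test functions one actually needs, $d_{\mu_i}p_{\red,\mu}$ and $d_{\mu_i}u_{\red,\mu}$, are admissible. Differentiability of all maps involved and well-posedness of the sensitivity and auxiliary problems follow from Assumption~\ref{asmpt:differentiability} and the coercivity of $a_\mu$.
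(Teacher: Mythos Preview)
Your proof is correct and follows precisely the adjoint-based strategy the paper has in mind: the paper does not spell out a proof of this proposition but simply states that it ``can be computed following \cite[Section~1.6.2]{HPUU2009}'', and your chain-rule expansion of $\LL(u_{\red,\mu},\mu,p_{\red,\mu})$ together with the elimination of the sensitivity terms via the auxiliary problems \eqref{A2:z_eq} and \eqref{A2:w_eq} is exactly that approach. The bookkeeping regarding admissible test functions in $V_\red^\pr$ versus $V_\red^\du$ is handled correctly throughout.
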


We also define the derivatives of the maps $\mathcal{S}_\red$ and $\mathcal{A}_\red$ in direction $\nu \in \Params$ as the solutions $d_{\nu} u_{\red, \mu} \in V_\red^\pr$ and $d_{\nu} p_{\red, \mu} \in V_\red^\du$ of
\begin{align}
  a_\mu(d_{\nu} u_{\red, \mu}, v_\red) &= \partial_{\mu} r_\mu^\pr(u_{\red, \mu})[v_\red]\cdot\nu &&\text{for all } v_\red \in V_\red^\pr\label{eq:true_primal_reduced_sensitivity}
  \end{align}
  and
  \begin{equation}
  \label{eq:true_dual_reduced_sensitivity}
   \begin{aligned}
  a_\mu(q_\red, d_{\nu} p_{\red, \mu}) &= d_\mu\partial_u \J(u_{\red, \mu}, \mu)[q_\red]\cdot\nu  \\ & \quad  -\partial_\mu a_\mu(q_\red, p_{r, \mu})\cdot\nu 
  && \hspace{2pt}\text{for all } q_\red \in V_\red^\du,  
  \end{aligned}
  \end{equation}
respectively, analogously to Propositions \ref{prop:solution_dmu_eta} and \ref{prop:dual_solution_dmu_eta}, where the last equality holds for quadratic functionals as in \eqref{P.argmin}.
\begin{remark}
	For more accurate reduced derivatives of the solution maps in \eqref{eq:true_primal_reduced_sensitivity} and \eqref{eq:true_dual_reduced_sensitivity}
	one could again commit a variational crime by introducing problem adapted RB spaces for the primal and dual sensitivities w.r.t.~all canonical directions,
	i.e $V_\red^{\pr,d_{\mu_i}}$ and $V_\red^{\du,d_{\mu_i}}$. These spaces would then consist of FOM snapshots of the respective derivatives,
	i.e.~solutions of \eqref{eq:primal_sens} and \eqref{eq:dual_sens}; cf.~{\rm\cite{KMOSV20}}.
	We do not follow this strategy here, since the computational demand for enriching all these spaces scales with the size of the parameter space and quickly becomes unfeasible for large scale applications.
\end{remark}
With the help of the reduced derivatives of the primal and dual solution maps, we can also compute the hessian of the NCD-corrected RB reduced functional; cf.~\cite[Section~1.6.4]{HPUU2009}. 
\begin{proposition}[Hessian of the NCD-corrected RB reduced functional]
  \label{prop:true_corrected_reduced_hessian}
  Given a direction $\nu\in\Params$, the evaluation of the hessian $\cHhatn$ of $\cJhatn$ is
  \[
  \begin{aligned}
  \cHhatn(\mu)\cdot\nu & = \nabla_\mu \left(j_\mu(d_\nu u_{\red,\mu})+2k_\mu(u_{\red,\mu},d_\nu u_{\red,\mu})  - a_\mu(d_\nu u_{\red,\mu},p_{\red,\mu}+w_{\red,\mu}) \right.\\
  & + r^\pr_\mu(u_{\red,\mu})[d_\nu p_{\red,\mu}+d_\nu w_{\red,\mu}] - 2 k_\mu (z_{\red,\mu},d_\nu u_{\mu,\red}) \\
  &  + a_\mu(z_{\red,\mu},d_\nu p_{\red,\mu}) - r^\du_\mu(u_{\red,\mu},p_{\red,\mu})[d_\nu z_{\red,\mu}]   \\
  & \left.+\partial_\mu (\J(u_{\red,\mu},\mu) + r^\pr_\mu(u_{\red,\mu})[p_{\red,\mu}+w_{\red,\mu}]- r_\mu^\du(u_{\red,\mu},p_{\red,\mu})[z_{\red,\mu}])\cdot \nu \right)
  \end{aligned}
  \]
  where $d_\nu u_{\red,\mu}, w_{\red,\mu} \in V^\pr_\red$ and $d_\nu p_{\red,\mu}, z_{\red,\mu} \in V^\du_r$ solve \eqref{eq:true_primal_reduced_sensitivity}, \eqref{A2:w_eq},
  \eqref{eq:true_dual_reduced_sensitivity} and \eqref{A2:z_eq}, respectively. Furthermore, $d_\nu z_{\red,\mu} \in V^\du_r$ solves
  \begin{equation}
  \label{z_eq_sens}
  a_\mu(d_\nu z_{\red,\mu},q) = -\partial_\mu(r_\mu^\pr(u_{\red,\mu})[q] + a_\mu(z_{\red,\mu},q))\cdot \nu + a_\mu(d_\nu u_{\red,\mu},q)
  \end{equation}
  for all $q\in V^\du_r$ and $w_{\red,\mu} \in V^\pr_r$ solves
  \begin{equation}
  \label{w_eq_sens}
  \begin{aligned}
  a_\mu(v,d_\nu w_{\red,\mu}) &= \partial_\mu (r_\mu^\du(u_{\red,\mu},p_{\red,\mu})[v]-2k_\mu(z_{\red,\mu},v)-a_\mu(v,w_{\red,\mu}))\cdot\nu \\
  & +2k_\mu(v,d_\nu u_{\red,\mu}-d_\nu z_{\red,\mu})-a_\mu(v,d_\nu p_{\red,\mu}), \quad \forall\, v\in V^\pr_r.
  \end{aligned}
  \end{equation}
  \end{proposition}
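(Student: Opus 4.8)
The plan is to obtain $\cHhatn(\mu)\cdot\nu$ as the directional derivative, in the direction $\nu$, of the gradient of $\cJhatn$ from Proposition~\ref{prop:true_corrected_reduced_gradient_adj}, in the spirit of \cite[Section~1.6.4]{HPUU2009} and in complete analogy to the derivation of that gradient. Since the reduced primal, dual and auxiliary solutions depend smoothly on $\mu$ under Assumption~\ref{asmpt:differentiability}, $\cJhatn$ is twice continuously differentiable, and, $\nu$ being a fixed vector, $\cHhatn(\mu)\cdot\nu = \nabla_\mu(\nabla_\mu\cJhatn(\mu)\cdot\nu)$. I would therefore proceed in two stages: first establish the reduced sensitivity equations \eqref{eq:true_primal_reduced_sensitivity}, \eqref{eq:true_dual_reduced_sensitivity}, \eqref{z_eq_sens} and \eqref{w_eq_sens} that render the $d_\nu(\cdot)$-quantities appearing in the claim well defined, and then verify that the large bracketed expression in the statement, call it $\Psi(\mu)$, coincides with $\nabla_\mu\cJhatn(\mu)\cdot\nu$; the claim then follows by applying $\nabla_\mu$, and carrying out this outer gradient term by term with the sensitivity equations from the first stage produces the efficiently evaluable formula used in the algorithm.

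For the first stage, the equations \eqref{eq:true_primal_reduced_sensitivity} and \eqref{eq:true_dual_reduced_sensitivity} for $d_\nu u_{\red,\mu}$ and $d_\nu p_{\red,\mu}$ are obtained by differentiating the reduced primal and dual equations \eqref{eq:state_red} and \eqref{eq:dual_solution_red} with respect to $\mu$, exactly as in Propositions~\ref{prop:solution_dmu_eta} and~\ref{prop:dual_solution_dmu_eta}. The genuinely new equations \eqref{z_eq_sens} and \eqref{w_eq_sens} for $d_\nu z_{\red,\mu}$ and $d_\nu w_{\red,\mu}$ follow by differentiating the auxiliary equations \eqref{A2:z_eq} and \eqref{A2:w_eq} with respect to $\mu$ in the direction $\nu$, using that the test functions $q\in V_\red^\du$, $v\in V_\red^\pr$ are $\mu$-independent: from \eqref{eq:primal_residual}, differentiating $r_\mu^\pr(u_{\red,\mu})[q]$ contributes $\partial_\mu r_\mu^\pr(u_{\red,\mu})[q]\cdot\nu - a_\mu(d_\nu u_{\red,\mu},q)$, and from \eqref{eq:dual_residual}, differentiating $r_\mu^\du(u_{\red,\mu},p_{\red,\mu})[v]$ contributes $\partial_\mu r_\mu^\du(u_{\red,\mu},p_{\red,\mu})[v]\cdot\nu + 2k_\mu(v,d_\nu u_{\red,\mu}) - a_\mu(v,d_\nu p_{\red,\mu})$; collecting the explicit-$\mu$ contributions into the $\partial_\mu(\cdots)\cdot\nu$ terms and merging the $k_\mu$-contributions via the symmetry of $k_\mu$ yields precisely \eqref{z_eq_sens} and \eqref{w_eq_sens}.

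For the second stage I would show that the non-$\partial_\mu$ part of $\Psi(\mu)$ vanishes, so that $\Psi$ reduces to its last line. Testing \eqref{A2:w_eq} with $v = d_\nu u_{\red,\mu}\in V_\red^\pr$ and using the symmetry of $k_\mu$ shows that $j_\mu(d_\nu u_{\red,\mu}) + 2k_\mu(u_{\red,\mu},d_\nu u_{\red,\mu}) - a_\mu(d_\nu u_{\red,\mu},p_{\red,\mu}+w_{\red,\mu}) = 2k_\mu(z_{\red,\mu},d_\nu u_{\red,\mu})$, which cancels the term $-2k_\mu(z_{\red,\mu},d_\nu u_{\red,\mu})$; testing \eqref{A2:z_eq} with $q = d_\nu p_{\red,\mu}\in V_\red^\du$ gives $a_\mu(z_{\red,\mu},d_\nu p_{\red,\mu}) = -r_\mu^\pr(u_{\red,\mu})[d_\nu p_{\red,\mu}]$, cancelling the $r_\mu^\pr(u_{\red,\mu})[d_\nu p_{\red,\mu}]$ and $a_\mu(z_{\red,\mu},d_\nu p_{\red,\mu})$ contributions; and since $d_\nu w_{\red,\mu}\in V_\red^\pr$ and $d_\nu z_{\red,\mu}\in V_\red^\du$, the reduced equations \eqref{eq:state_red} and \eqref{eq:dual_solution_red} give $r_\mu^\pr(u_{\red,\mu})[d_\nu w_{\red,\mu}] = 0$ and $r_\mu^\du(u_{\red,\mu},p_{\red,\mu})[d_\nu z_{\red,\mu}] = 0$. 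Hence $\Psi(\mu) = \partial_\mu(\J(u_{\red,\mu},\mu) + r_\mu^\pr(u_{\red,\mu})[p_{\red,\mu}+w_{\red,\mu}] - r_\mu^\du(u_{\red,\mu},p_{\red,\mu})[z_{\red,\mu}])\cdot\nu$, which by Proposition~\ref{prop:true_corrected_reduced_gradient_adj} equals $\nabla_\mu\cJhatn(\mu)\cdot\nu$. Applying $\nabla_\mu$ and recalling $\cHhatn(\mu)\cdot\nu = \nabla_\mu(\nabla_\mu\cJhatn(\mu)\cdot\nu)$ finishes the proof.

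The main obstacle is not any single estimate but the bookkeeping. In contrast to the conforming case of Proposition~\ref{prop:hessian_Jhat}, where primal and dual spaces coincide, here the reduced spaces $V_\red^\pr$ and $V_\red^\du$ may differ, so one must track at every step whether each of the eight quantities $u_{\red,\mu}, p_{\red,\mu}, w_{\red,\mu}, z_{\red,\mu}$ and their $\nu$-sensitivities lives in $V_\red^\pr$ or in $V_\red^\du$; a function may only be used as a test function in a reduced equation whose trial space contains it, and it is exactly this admissibility restriction that governs which residual and bilinear-form pairings drop out in the cancellations above and, equivalently, which sensitivity contributions survive when the outer $\nabla_\mu$ is computed explicitly.
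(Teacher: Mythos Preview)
Your proposal is correct and follows precisely the adjoint/Lagrangian route that the paper indicates by citing \cite[Section~1.6.4]{HPUU2009}; the paper itself does not spell out a proof beyond that reference, and your two-stage argument (derive the four sensitivity equations by differentiating \eqref{eq:state_red}, \eqref{eq:dual_solution_red}, \eqref{A2:z_eq}, \eqref{A2:w_eq}; then verify the cancellations showing $\Psi(\mu)=\nabla_\mu\cJhatn(\mu)\cdot\nu$) is exactly how one fills in those details.

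One point worth making explicit: in the paper's convention the outer $\nabla_\mu$ acts only on the explicit $\mu$-dependence of $a_\mu,l_\mu,j_\mu,k_\mu$, treating $u_{\red,\mu},p_{\red,\mu},w_{\red,\mu},z_{\red,\mu}$ and their $\nu$-sensitivities as frozen (compare Proposition~\ref{prop:hessian_Jhat} and the appendix computation of $(\cHhatn)_{i,l}$). Your identity $\Psi(\mu)=\nabla_\mu\cJhatn(\mu)\cdot\nu$ immediately gives $\cHhatn(\mu)\cdot\nu$ as the \emph{total} $\mu$-gradient of $\Psi$; to match the partial-derivative reading one needs the additional observation that, writing $\Phi(X,\mu):=\J(u,\mu)+r_\mu^\pr(u)[p+w]-r_\mu^\du(u,p)[z]$ with $X=(u,p,w,z)$, your four cancellations are precisely the stationarity conditions $\partial_X\Phi=0$ on $V_\red^\pr\times V_\red^\du\times V_\red^\pr\times V_\red^\du$, so that $\Psi=d_\nu\Phi=\partial_X\Phi[d_\nu X]+\partial_\mu\Phi\cdot\nu$ and hence, by symmetry of second derivatives, $\nabla_\mu^{\text{partial}}\Psi=\partial_\mu\partial_X\Phi[d_\nu X]+\partial_\mu^2\Phi\cdot\nu=d_\nu(\partial_\mu\Phi)=\cHhatn(\mu)\cdot\nu$. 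This is the standard adjoint mechanism and is implicit in your closing remark about ``carrying out the outer gradient term by term with the sensitivity equations''; stating it once would remove any ambiguity.
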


There exist multiple possibilities for deducing a reduced hessian.
As a straight forward hessian, it is also feasible to consider the FOM hessian from Proposition \ref{prop:hessian_Jhat}
and reducing it by replacing all FOM quantities by their respective reduced counterpart. 
While this approach may be a better approximation of the FOM hessian,
it is not the true hessian of the NCD-corrected functional which would result in a quasi-Newton type method.
In order to prevent an overload of the work at hand, we omit a further discussion of this approach.
However, we emphasize that also for this approach a posteriori error analysis is available.
We further remark that the computation of the true hessian $\cHhatn(\mu)$ can also be realized without the use of auxiliary functions $z_{\red, \mu}$ and $w_{\red, \mu}$ and their derivatives, respectively.
However, this results in having to compute second order derivatives of $u_{\red, \mu}$ and $p_{\red, \mu}$ which aggravates the computations and makes the hessian inefficiently callable from
an optimization point of view because the second direction can not be pulled out. Thus, we also do not follow this approach.

\subsection{A posteriori error analysis} 

\label{sec:a_post_error_estimates}
For controlling the accuracy of the reduced model, we require a posteriori error estimates of all reduced quantities. 
Assumption \ref{asmpt:parameter_separable} is the key for the efficient computation of reduced quantities because it enables to assemble FOM matrices offline.
In this section, we re-state all estimates that we need for the error aware TR-RB method, and shortly mention an a posteriori result for the hessian of the NCD-corrected
RB reduced functional. We also present a bound for the distance to the true solution of the optimization problem.
For any functional $l \in V_h'$ or bilinear form $a: V_h \times V_h \to \R$, we denote their respective dual or operator norms $\|l\|$ and $\|a\|$ by the continuity constants $\cont{l}$ and $\cont{a}$.
The same consideration applies for the norm $\|\cdot\|$ in $V_h'$ of the residuals.
For $\mu \in \Params$, we denote the coercivity constant of $\bformd$ w.r.t.~the $V_h$-norm by $\underline{\bformd} > 0$. 

For $v_h \in V_h$, we define the residuals of the equation in Proposition~\ref{prop:solution_dmu_eta} and Proposition~\ref{prop:dual_solution_dmu_eta} for the canonical directions by
\begin{align}
\resd^{\pr,d_{\mu_i}}(&u_{h, \mu}, d_{\mu_i} u_{h, \mu})[v_h] := \partial_{\mu_i} \resd^\pr(u_{h, \mu})[v_h] - \bformd(d_{\mu_i} u_{h, \mu}, v_h),
\label{sens_res_pr}\\
\resd^{\du,d_{\mu_i}}(&u_{h, \mu}, p_{h, \mu}, d_{\mu_i} u_{h, \mu}, d_{\mu_i} p_{h, \mu})[v_h] \nonumber\\
&:= \partial_{\mu_i} \resd^\du(u_{h, \mu}, p_{h, \mu})[v_h] + 2\kformd(v_h, d_{\mu_i} u_{h, \mu}) - \bformd(v_h, d_{\mu_i} p_{h, \mu}).
\label{sens_res_du}
\end{align}

We summarize known error estimates from the literature and refer to \cite{KMOSV20} for a detailed discussion and proofs.
 
\begin{proposition}[Upper error bound for the reduced quantities]	\label{prop:error_reduced_quantities}
	For $\mu \in \Params$, let $u_{h, \mu}, p_{h, \mu} \in V_h$ be solutions of \eqref{eq:state_h} and \eqref{eq:dual_solution_h} and let $u_{\red, \mu} \in V_\red^\pr$ be 
	a solution of \eqref{eq:state_red}. Furthermore, for $1 \leq i \leq P$, let $d_{\mu_i}u_{h, \mu}, d_{\mu_i}p_{h, \mu} \in V_h$ be
	the solutions of the discrete versions of \eqref{eq:primal_sens} and \eqref{eq:dual_sens} and
	let $d_{\mu_i}u_{\red, \mu} \in V_\red^{\pr,d_{\mu_i}}\!\!$ and $d_{\mu_i}p_{\red, \mu} \in V_\red^{\pr,d_{\mu_i}}\!\!$ be the solutions of \eqref{eq:true_primal_reduced_sensitivity} and \eqref{eq:true_dual_reduced_sensitivity}.  Then it holds
	\begin{enumerate}[(i)]
		\item
		$\|u_{h, \mu} - u_{\red, \mu}\| \leq \Delta_\pr(\mu) := \underline{\bformd}^{-1}\, \|\resd^\pr(u_{\red, \mu})\|$,
		\item 	$\|p_{h, \mu} - p_{\red, \mu}\| \leq \Delta_\du(\mu) := \underline{\bformd}^{-1}\big(2 \cont{\kformd}\;\Delta_\pr(\mu) + \|\resd^\du(u_{\red, \mu}, p_{\red, \mu})\|\Big)$,
		\item  $|\Jhat_h(\mu) - \cJhatn(\mu)| \leq \Delta_{\cJhatn}(\mu)
		:=  \Delta_\pr(\mu) \|\resd^\du(u_{\red, \mu}, p_{\red,\mu})\| + \Delta_\pr(\mu)^2 \cont{\kformd}$,
	\item $	\|d_{\mu_i}u_{h, \mu} - d_{\mu_i}u_{\red, \mu}\| \leq \Delta_{d_{\mu_i}\pr}(\mu)$,
	\item $\|d_{\mu_i}p_{h, \mu} - d_{\mu_i}p_{\red, \mu}\| \leq \Delta_{ d_{\mu_i}\du}(\mu)$,
	\end{enumerate}
	where
	\begin{align*}
	\Delta_{d_{\mu_i}\pr}(\mu) &:= \underline{\bformd}^{-1}\Big(\cont{d_{\mu_i} \bformd} \Delta_\pr(\mu) + \|\resd^{\pr,d_{\mu_i}}(u_{\red, \mu}, d_{\mu_i}u_{\red, \mu})\| \Big),\\
	\Delta_{ d_{\mu_i}\du}(\mu) &:= \underline{\bformd}^{-1}\Big(
	2 \cont{d_{\mu_i} \kformd} \; \Delta_\pr(\mu) +  \cont{d_{\mu_i} \bformd} \; \Delta_\du(\mu) + 2 \cont{\kformd}  \; \Delta_{d_{\mu_i}pr}(\mu) \\
	&\qquad\qquad\quad+ \| \resd^{\du,d_{\mu_i}}(u_{\red, \mu}, p_{\red, \mu}, d_{\mu_i}u_{\red, \mu}, d_{\mu_i}p_{\red, \mu}) \| \Big).
	\end{align*}
\end{proposition}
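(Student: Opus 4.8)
The plan is to recognize that all five bounds are instances of the classical residual-based a posteriori framework (they are restated from \cite{KMOSV20}), so the strategy is uniform: for each reduced quantity I would form the error equation by subtracting the reduced equation from its FOM counterpart, test it with the error itself --- which by construction lies in $V_h$, where the FOM equation is exact --- and then invoke coercivity of $\bformd$ on the left and Cauchy--Schwarz together with the relevant continuity constants on the right.

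Concretely, for (i) I would set $e^\pr := u_{h,\mu}-u_{\red,\mu}\in V_h$ and test \eqref{eq:state_h} with $e^\pr$, obtaining
\[
\underline{\bformd}\,\|e^\pr\|^2 \;\le\; \bformd(e^\pr,e^\pr) \;=\; \lformd(e^\pr)-\bformd(u_{\red,\mu},e^\pr) \;=\; \resd^\pr(u_{\red,\mu})[e^\pr]\;\le\;\|\resd^\pr(u_{\red,\mu})\|\,\|e^\pr\|,
\]
and divide. For (ii), the same manipulation on \eqref{eq:dual_solution_h} for $e^\du := p_{h,\mu}-p_{\red,\mu}\in V_h$ leaves one extra term because the dual right-hand side depends on the primal variable: after adding and subtracting $2\kformd(e^\du,u_{\red,\mu})$ one arrives at $\underline{\bformd}\,\|e^\du\|^2 \le \resd^\du(u_{\red,\mu},p_{\red,\mu})[e^\du] + 2\kformd(e^\du,e^\pr)$, and the last term is controlled by $2\,\cont{\kformd}\,\Delta_\pr(\mu)\,\|e^\du\|$ using (i). Dividing yields (ii).

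For (iii) I would switch to the goal-oriented (Lagrangian) viewpoint: $\Jhat_h(\mu) = \LL(u_{h,\mu},\mu,p_{\red,\mu})$ --- which holds by \eqref{eq:Jhat_h} because the FOM primal residual $\resd^\pr(u_{h,\mu})$ vanishes on all of $V_h$ and $p_{\red,\mu}\in V^\du_\red\subset V_h$ --- while $\cJhatn(\mu)=\LL(u_{\red,\mu},\mu,p_{\red,\mu})$ by \eqref{eq:Jhat_red_corected}. Since $\J$ is quadratic in $u$ as in \eqref{P.argmin} and $\resd^\pr(\cdot)[p]$ is affine in $u$, the map $u\mapsto\LL(u,\mu,p_{\red,\mu})$ is quadratic, so its Taylor expansion around $u_{\red,\mu}$ terminates at second order and is exact: the linear term is $\partial_u\LL(u_{\red,\mu},\mu,p_{\red,\mu})[e^\pr]=\resd^\du(u_{\red,\mu},p_{\red,\mu})[e^\pr]$ by the definition \eqref{eq:dual_residual} of the dual residual, and the quadratic remainder is exactly $\kformd(e^\pr,e^\pr)$. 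Estimating both by continuity and inserting (i) gives $\Delta_{\cJhatn}(\mu)=\Delta_\pr(\mu)\,\|\resd^\du(u_{\red,\mu},p_{\red,\mu})\|+\Delta_\pr(\mu)^2\,\cont{\kformd}$.

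Finally, for (iv) and (v) I would apply the same recipe to the discrete versions of the sensitivity equations \eqref{eq:primal_sens} and \eqref{eq:dual_sens}, testing with $e := d_{\mu_i}u_{h,\mu}-d_{\mu_i}u_{\red,\mu}$ respectively $e := d_{\mu_i}p_{h,\mu}-d_{\mu_i}p_{\red,\mu}$. Subtracting the reduced sensitivity equations \eqref{eq:true_primal_reduced_sensitivity}, \eqref{eq:true_dual_reduced_sensitivity} and reorganizing, one recognizes the sensitivity residuals \eqref{sens_res_pr}, \eqref{sens_res_du} evaluated at the reduced quantities, plus correction terms built from $(\partial_{\mu_i}\bformd)(e^\pr,e)$, $(\partial_{\mu_i}\bformd)(e^\du,e)$, $\kformd(\cdot,e)$ and $(\partial_{\mu_i}\kformd)(\cdot,e)$; each correction is bounded by its continuity constant times one of the already-established errors $\Delta_\pr(\mu)$, $\Delta_\du(\mu)$, $\Delta_{d_{\mu_i}\pr}(\mu)$, and dividing by $\|e\|$ produces the stated $\Delta_{d_{\mu_i}\pr}$ and $\Delta_{d_{\mu_i}\du}$. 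I expect the only delicate point to be the bookkeeping in (v): the four error contributions must be chained through (i), (ii) and (iv) in the correct order, and one must keep track of which residual is evaluated at FOM versus reduced arguments --- but no new idea beyond the coercivity/continuity estimate used everywhere else is required.
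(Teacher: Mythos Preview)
Your proposal is correct and follows exactly the standard coercivity/residual/Cauchy--Schwarz argument that the paper itself does not reproduce but defers to \cite{KMOSV20}; in particular your treatment of (iii) via the exact quadratic Taylor expansion of $u\mapsto\LL(u,\mu,p_{\red,\mu})$ around $u_{\red,\mu}$ is precisely the mechanism behind the NCD-corrected estimate. There is nothing to add beyond the bookkeeping you already flagged in (v).
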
	

We also provide an a posteriori error result for the hessian of the NCD-corrected functional.
We emphasize that (just as the sensitivity estimates $\Delta_{d_{\mu_i}\pr}(\mu)$ and $\Delta_{ d_{\mu_i}\du}(\mu)$) this error estimator is not a part of our TR-RB method.
The proof and a detailed definition is postponed to the appendix.

\begin{proposition}[Upper bound on the model reduction error of the hessian of the reduced output]
	\label{prop:hessian_Jhat_error_NCD} 
	For the hessian $\HHhat_{h,\mu}(\mu)$ of $\Jhat_h(\mu)$ and the true hessian 
	$\cHhatn(\mu)$ of the NCD-corrected functional from Proposition {\rm\ref{prop:true_corrected_reduced_hessian}},
	there exists an a posteriori error bound
	\begin{align*}
	\big|\HHhat_{h,\mu}(\mu) - \cHhatn(\mu)&\big| \leq \Delta_{{\HH}}(\mu) := \Big\| \big(\Delta_{\HH_{i,l}}(\mu)\big)_{i,l} \Big\|_2
	\end{align*}
	which is dependent on the estimators from Proposition \ref{prop:error_reduced_quantities}, except $\Delta_{\cJhatn}(\mu)$.
\end{proposition}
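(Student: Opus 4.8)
The plan is to exploit the fact that the formula for $\cHhatn(\mu)$ in Proposition~\ref{prop:true_corrected_reduced_hessian} collapses to the discrete hessian $\HHhat_{h,\mu}(\mu)$ of Proposition~\ref{prop:hessian_Jhat} as soon as all reduced primal and dual quantities are replaced by their FOM counterparts. Indeed, if one formally takes $V_\red^\pr = V_\red^\du = V_h$, then $z_{\red,\mu}$ solves $\bformd(z_{h,\mu}, q) = -\resd^\pr(u_{h,\mu})[q]$ for all $q \in V_h$; since $u_{h,\mu}$ solves the discrete primal equation the right-hand side vanishes and hence $z_{h,\mu} = 0$, which by \eqref{A2:w_eq} and the discrete dual equation forces $w_{h,\mu} = 0$; differentiating these identities in $\nu$ also yields $d_\nu z_{h,\mu} = d_\nu w_{h,\mu} = 0$. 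Consequently every NCD-correction term in the expression of Proposition~\ref{prop:true_corrected_reduced_hessian} drops out and what remains is exactly $\HHhat_{h,\mu}(\mu)\cdot\nu$, written with $u_{h,\mu}, p_{h,\mu}, d_\nu u_{h,\mu}, d_\nu p_{h,\mu}$ in place of the reduced functions. Thus, choosing $\nu = e_l$ and testing the resulting vector against $e_i$, one can write each entry $\big(\HHhat_{h,\mu}(\mu) - \cHhatn(\mu)\big)_{i,l}$ as the difference of one and the same finite sum of bilinear and trilinear expressions in the primal, dual, sensitivity and auxiliary functions, evaluated at the FOM arguments and at the reduced arguments respectively.

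Next I would process this difference term by term via the usual ``one factor at a time'' telescoping: in a product such as $\theta^k_m(\mu)\, k_m(u_{h,\mu}, d_{\mu_l} u_{h,\mu}) - \theta^k_m(\mu)\, k_m(u_{\red,\mu}, d_{\mu_l} u_{\red,\mu})$ I would add and subtract the mixed term $\theta^k_m(\mu)\, k_m(u_{\red,\mu}, d_{\mu_l} u_{h,\mu})$, bound each resulting piece by the continuity constant of the relevant (possibly parameter-differentiated) form times the norm of exactly one FOM/ROM difference, and invoke the estimates of Proposition~\ref{prop:error_reduced_quantities}, i.e. $\Delta_\pr(\mu)$, $\Delta_\du(\mu)$, $\Delta_{d_{\mu_i}\pr}(\mu)$, $\Delta_{d_{\mu_i}\du}(\mu)$, for those norms, while replacing the remaining FOM factors (such as $\|d_{\mu_l} u_{h,\mu}\|$) by the computable bound $\|d_{\mu_l} u_{\red,\mu}\| + \Delta_{d_{\mu_l}\pr}(\mu)$. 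The outer $\nabla_\mu$ only acts on the explicit parameter dependence through the coefficient functions $\theta^\bullet_m$, so after expanding it each summand is again of the above product type with $\theta^\bullet_m$ replaced by one of its first partial derivatives; by Assumption~\ref{asmpt:differentiability} these are continuous, hence bounded on the compact set $\Params$, and contribute only fixed constants.

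The last ingredient is to control the reduced auxiliary quantities, which occur (alone, with no FOM partner) in $\cHhatn(\mu)$. From \eqref{A2:z_eq} and coercivity one obtains $\|z_{\red,\mu}\| \leq \underline{\bformd}^{-1}\,\|\resd^\pr(u_{\red,\mu})\| = \Delta_\pr(\mu)$; inserting this into \eqref{A2:w_eq} gives $\|w_{\red,\mu}\| \leq \underline{\bformd}^{-1}\big(\|\resd^\du(u_{\red,\mu},p_{\red,\mu})\| + 2\cont{\kformd}\,\Delta_\pr(\mu)\big)$; and bounding \eqref{z_eq_sens} and \eqref{w_eq_sens} by coercivity once more estimates $\|d_{\mu_i} z_{\red,\mu}\|$ and $\|d_{\mu_i} w_{\red,\mu}\|$ in terms of these quantities together with $\|d_{\mu_i} u_{\red,\mu}\|$, $\|d_{\mu_i} p_{\red,\mu}\|$ and the residuals of the sensitivity equations \eqref{sens_res_pr}, \eqref{sens_res_du}. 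Collecting all contributions produces a computable entry-wise bound $\Delta_{\HH_{i,l}}(\mu)$ that depends only on the estimators of Proposition~\ref{prop:error_reduced_quantities} -- in particular not on $\Delta_{\cJhatn}(\mu)$ -- and since the spectral norm of a matrix is dominated by its Frobenius norm, $\big|\HHhat_{h,\mu}(\mu) - \cHhatn(\mu)\big| \leq \big\|(\Delta_{\HH_{i,l}}(\mu))_{i,l}\big\|_2 =: \Delta_{\HH}(\mu)$.

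The main difficulty here is organisational rather than conceptual: the expression for $\cHhatn(\mu)$ has many terms and the auxiliary sensitivities $d_\nu z_{\red,\mu}, d_\nu w_{\red,\mu}$ are themselves defined through the lower-order quantities, so the estimation has to proceed along a carefully ordered cascade (first the residual-based primal/dual bounds, then $z_{\red,\mu}, w_{\red,\mu}$, then $d_\nu u_{\red,\mu}, d_\nu p_{\red,\mu}$, then $d_\nu z_{\red,\mu}, d_\nu w_{\red,\mu}$, and only then the hessian difference) to keep the bookkeeping under control; this is precisely why the explicit definition of $\Delta_{\HH_{i,l}}(\mu)$ is relegated to the appendix.
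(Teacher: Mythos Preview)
Your proposal is correct and follows essentially the same route as the paper's appendix proof: write out the $(i,l)$-entries of both hessians, observe that the NCD auxiliary terms have no FOM counterpart (equivalently, $z_{h,\mu}=w_{h,\mu}=0$), telescope each bilinear/trilinear piece to isolate a single FOM--ROM difference, bound by continuity constants together with $\Delta_\pr,\Delta_\du,\Delta_{d_{\mu_l}\pr},\Delta_{d_{\mu_l}\du}$, and control $\|z_{\red,\mu}\|,\|w_{\red,\mu}\|,\|d_{\mu_l}z_{\red,\mu}\|,\|d_{\mu_l}w_{\red,\mu}\|$ via coercivity in \eqref{A2:z_eq}, \eqref{A2:w_eq}, \eqref{z_eq_sens}, \eqref{w_eq_sens}. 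One minor slip: invoking ``spectral norm $\leq$ Frobenius norm'' would only yield $\|(\Delta_{\HH_{i,l}})\|_F$ on the right; the stated bound with $\|\cdot\|_2$ follows instead from the fact that an entrywise nonnegative majorant dominates in spectral norm, i.e.\ $|A_{i,l}|\le B_{i,l}$ with $B_{i,l}\ge 0$ implies $\|A\|_2\le\|B\|_2$.
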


Following ideas from \cite{dihl15,KTV13}, we derive an error estimation for the optimal parameter consisting of the gradient and hessian of the FOM cost functional. This estimator relies on the following second-order condition for a strict local minima $\bar\mu_h$ of $\Jhat_h$, i.e.
\begin{align}
\label{coerc-cond}
\nu \cdot ( \HHhat_{h,\mu}(\bar \mu_h) \cdot \nu) \geq \lambda_\text{min}\left\|\nu\right\|_2^2 && \text{for all } \nu\in\mathcal{C}(\bar\mu_h)\setminus\left\{0\right\},
\end{align} 
where $\lambda_\text{min}$ is the smallest eigenvalue of $\HHhat_{h,\mu}(\bar\mu_h)$, since the parameter space is finite-dimensional. Note that \eqref{coerc-cond} is equivalent to the second-order sufficient optimality condition from Proposition~\ref{prop:second_order}. If \eqref{coerc-cond} holds true, we have that for any $\tilde{\lambda}$ such that $0<\tilde{\lambda}<\lambda_\text{min}$ there exists a radius $r(\tilde{\lambda})>0$ such that for all $\mu\in \mathcal{B}(\bar \mu_h, r(\tilde{\lambda}))$, the closed ball of radius $r(\tilde{\lambda})$ centered in $\bar\mu_h$, the following property holds:
\begin{align*}
\nu \cdot ( \HHhat_{h,\mu}(\mu) \cdot \nu) \geq \tilde{\lambda}\left\|\nu\right\|_2^2 && \text{for all } \nu\in\mathcal{C}(\bar\mu_h)\setminus\left\{0\right\}.
\end{align*} 

\begin{proposition}[Upper bound for optimal parameters with the full order model] \label{Prop:argmin}
Let Assumption~{\rm\ref{asmpt:truth}} be satisfied. Moreover, let $\bar\mu_h$ and $\bar \mu_\red$ be strict local minima for the optimization problems \eqref{Phat_h} and \eqref{Phat_\red}, respectively. If $\bar\mu_\red \in \mathcal{B}(\bar \mu_h, r(\lambda_\text{min}/2))$, then it holds 
\begin{equation}
\label{apo-est-parameters}
\| \bar \mu_h - \bar \mu_\red \|_2 \leq \Delta_\mu(\bar \mu_\red) := \frac{2}{\lambda_\text{min}} \left\| \zeta \right\|_2,
\end{equation}
where $\zeta= (\zeta_i)\in\mathbb{R}^P$ with
\[
\zeta_i:= \left\{ \begin{array}{ll} -\min(0,(\nabla\Jhat_h(\bar\mu_\red))_i) & \text{if } \bar\mu_{\red,i}=(\mu_\mathsf{a})_i \\
-\max(0,(\nabla\Jhat_h(\bar\mu_\red))_i) & \text{if } \bar\mu_{\red,i}=(\mu_\mathsf{b})_i   \\
-(\nabla\Jhat_h(\bar\mu_\red))_i & \text{otherwise}
\end{array} \right.
\]
for $i=1,\ldots,P$.	
\end{proposition}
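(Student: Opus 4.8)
The plan is to exploit the second-order coercivity condition \eqref{coerc-cond} together with the variational inequality \eqref{eq:optimality_conditions:mu} satisfied by the true optimum $\bar\mu_h$, in the spirit of \cite{dihl15,KTV13}. First I would record the first-order optimality condition for $\bar\mu_h$ in the form $\nabla\Jhat_h(\bar\mu_h)\cdot(\nu-\bar\mu_h)\ge 0$ for all $\nu\in\Params$, and observe that, because $\Params$ is a box, this is equivalent to the componentwise sign conditions: $(\nabla\Jhat_h(\bar\mu_h))_i\ge 0$ if $\bar\mu_{h,i}=(\mu_\mathsf{a})_i$, $(\nabla\Jhat_h(\bar\mu_h))_i\le 0$ if $\bar\mu_{h,i}=(\mu_\mathsf{b})_i$, and $(\nabla\Jhat_h(\bar\mu_h))_i=0$ otherwise. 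The auxiliary vector $\zeta$ is precisely the analogous ``projected gradient residual'' evaluated at $\bar\mu_\red$: it measures the failure of $\bar\mu_\red$ to satisfy the \emph{FOM} first-order conditions, and $\zeta=0$ would mean $\bar\mu_\red$ is itself FOM-stationary.

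Next I would set $d:=\bar\mu_\red-\bar\mu_h$ and apply the mean value theorem (or Taylor expansion with integral remainder) to $\nabla\Jhat_h$ along the segment $[\bar\mu_h,\bar\mu_\red]$, which lies in $\mathcal{B}(\bar\mu_h,r(\lambda_\text{min}/2))$ by the hypothesis $\bar\mu_\red\in\mathcal{B}(\bar\mu_h,r(\lambda_\text{min}/2))$ and convexity of the ball. This gives $\nabla\Jhat_h(\bar\mu_\red)-\nabla\Jhat_h(\bar\mu_h)=M d$ where $M=\int_0^1\HHhat_{h,\mu}(\bar\mu_h+td)\,dt$. Testing with $d$ and using the strengthened coercivity $\nu\cdot(\HHhat_{h,\mu}(\mu)\cdot\nu)\ge(\lambda_\text{min}/2)\|\nu\|_2^2$ valid on the ball — here one must check that $d$ lies in (or can be reduced to) the critical cone $\mathcal{C}(\bar\mu_h)$, which holds because $d=c_1(\bar\mu_\red-\bar\mu_h)$ with $\bar\mu_\red\in\Params$, and the components where the gradient is nonzero are exactly the active ones where $d$ has the correct sign so that the active-constraint terms only help — I would obtain $(\lambda_\text{min}/2)\|d\|_2^2\le d\cdot(Md)=d\cdot\big(\nabla\Jhat_h(\bar\mu_\red)-\nabla\Jhat_h(\bar\mu_h)\big)$.

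The final step is to bound the right-hand side by $\|d\|_2\,\|\zeta\|_2$. Here I would split the sum over components $i$ according to which face of the box $\bar\mu_{\red,i}$ sits on (or none), and in each case combine the sign of $d_i=\bar\mu_{\red,i}-\bar\mu_{h,i}$ (forced by the geometry of the box and the fact that $\bar\mu_h\in\Params$) with the corresponding sign condition on $(\nabla\Jhat_h(\bar\mu_h))_i$ to show $d_i\big((\nabla\Jhat_h(\bar\mu_\red))_i-(\nabla\Jhat_h(\bar\mu_h))_i\big)\le d_i\,\zeta_i\le |d_i|\,|\zeta_i|$; summing and applying Cauchy–Schwarz yields $d\cdot\big(\nabla\Jhat_h(\bar\mu_\red)-\nabla\Jhat_h(\bar\mu_h)\big)\le\|d\|_2\|\zeta\|_2$. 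Dividing through by $(\lambda_\text{min}/2)\|d\|_2$ gives \eqref{apo-est-parameters}, with the trivial case $d=0$ handled separately.

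The main obstacle I anticipate is the bookkeeping in this last step: one must verify, face by face, that replacing $(\nabla\Jhat_h(\bar\mu_\red))_i$ by $-\zeta_i$ is a valid upper bound for the product $d_i\big((\nabla\Jhat_h(\bar\mu_\red))_i-(\nabla\Jhat_h(\bar\mu_h))_i\big)$, using both the activity pattern of $\bar\mu_h$ and that of $\bar\mu_\red$, which need not coincide. A secondary technical point is justifying that the direction $d$ may legitimately be used in the coercivity estimate on the critical cone rather than a genuinely feasible variation — one either argues directly that $d\in\mathcal{C}(\bar\mu_h)$ modulo the inactive components, or invokes that on the ball $\HHhat_{h,\mu}$ is positive definite on the relevant subspace with constant $\lambda_\text{min}/2$; this is where the choice $r(\lambda_\text{min}/2)$ in the hypothesis is used.
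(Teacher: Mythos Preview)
Your approach is essentially the one the paper has in mind: the paper's own proof simply invokes \cite[Theorem~3.4]{KTV13}, and your sketch is precisely a reconstruction of that argument (second-order coercivity on a neighbourhood, Taylor/mean-value expansion of $\nabla\Jhat_h$, and comparison of the two first-order systems). Two small clarifications will tighten it.

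First, your worry about the critical cone is dissolved by the paper's convention: it defines $\lambda_{\min}$ as the smallest eigenvalue of the \emph{full} Hessian $\HHhat_{h,\mu}(\bar\mu_h)$, so the coercivity $\nu\cdot(\HHhat_{h,\mu}(\mu)\cdot\nu)\ge(\lambda_{\min}/2)\|\nu\|_2^2$ actually holds for every $\nu\in\mathbb{R}^P$ on the ball, and you may test with $d=\bar\mu_\red-\bar\mu_h$ without restricting to $\mathcal{C}(\bar\mu_h)$.

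Second, the cleanest route through your ``main obstacle'' is not a componentwise inequality mixing the activity patterns of $\bar\mu_h$ and $\bar\mu_\red$, but rather to observe that $\zeta$ is chosen so that $\bar\mu_\red$ satisfies the \emph{perturbed} variational inequality $(\nabla\Jhat_h(\bar\mu_\red)+\zeta)\cdot(\nu-\bar\mu_\red)\ge 0$ for all $\nu\in\Params$ (check each of the three cases in the definition of $\zeta_i$). Choosing $\nu=\bar\mu_h$ there and $\nu=\bar\mu_\red$ in the unperturbed inequality for $\bar\mu_h$, then adding, gives directly
\[
d\cdot\big(\nabla\Jhat_h(\bar\mu_\red)-\nabla\Jhat_h(\bar\mu_h)\big)\le -\,\zeta\cdot d\le\|\zeta\|_2\|d\|_2,
\]
which is exactly what you need; this avoids the delicate sign bookkeeping you flagged.
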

\begin{proof}
Note that Assumption~\ref{asmpt:truth} implies that the distance between $\bar\mu_h$ of \eqref{Phat_h} and a strict local minimum $\bar\mu$ of $\Jhat$ satisfying \eqref{coerc-cond} is negligible, thus we can follow the proof of \cite[Theorem~3.4]{KTV13}.
\end{proof}
\begin{remark}
	\label{rem:mu_est_fom}
(1) Proposition~{\rm\ref{Prop:argmin}} requires the strong assumption that the FOM and RB models are accurate enough to have the parameters $\bar\mu_h$ and $\bar\mu_\red$ sufficiently close to a local minimum $\bar\mu$. 
	In {\rm\cite{dihl15}}, a sufficient condition based on the FOM gradient and hessian is given to guarantee this in case $\bar\mu_h,\bar\mu_\red\in \text{int }\Params$. \\
(2) Due to Proposition~{\rm\ref{Prop:argmin}}, we can estimate the distance to the optimal parameter $\bar\mu_h$ without explicitly computing it.
	Note that the computation of $\zeta$ is not costly for Algorithm~{\rm\ref{Alg:TR-RBmethod}}, since the FOM adjoint solution is available.
	The computation of $\lambda_{\text{min}}$ would require the evaluation of the FOM hessian, which is a costly procedure instead. This can be spead up with a cheap estimation of the eigenvalue.
	In {\rm\cite[Proposition~6]{dihl15}}, the authors utilize the smallest eigenvalue of the reduced-order hessian under suitable conditions.
	In our numerical tests, these conditions were never true, implying the inapplicability of the mentioned cheap estimate in our case.
	For the sake of completeness, let us mention that another technique is to compute $\lambda_{\text{min}}$ in advance on a grid in $\Params\subset\mathbb{R}^P$, when $P$ is sufficiently small.
	This approach can be even performed in parallel, since each eigenvalue computation is independent; cf. \cite[Section~6.4.1]{Trenz2017}.\\
(3) Due to the above-mentioned computational cost, we use estimate \eqref{apo-est-parameters} only as post-processing tool:
	once the TR-RB algorithm (cf. Section~{\rm\ref{sec:TRRB_and_adaptiveenrichment}}) has converged, we check if its solution is close enough to $\bar\mu_h$.
	If not, we decrease the stopping tolerance $\tau_{\text{\rm{FOC}}}$ (cf. Algorithm~{\rm\ref{Alg:TR-RBmethod}}) and continue with the algorithm.
\end{remark}

\section{The improved TR-RB Method}
\label{sec:TRRB_and_adaptiveenrichment}
Trust-region methods iteratively compute a first-order critical point of problem \eqref{P}.
For each outer iteration $k\geq 0$ of the TR method, we consider a model function $m^{(k)}$
as a cheap local approximation of the quadratic cost functional $\J$ in the so-called trust-region, which has radius $\delta^{(k)}$. We are therefore interested in solving the following constrained optimization sub-problem
\begin{equation}
\label{TRsubprob}
\begin{aligned}
\min_{s\in \mathbb{R}^P} m^{(k)}(s) \, \text{ subject to } & \|s\|_2 \leq \delta^{(k)},\, \widetilde{\mu}:= \mu^{(k)}+s \in\Params \\ & \text{ and } r_{\tilde{\mu}}^\pr(u_{\tilde{\mu}})[v]= 0 \, \text{ for all }  v\in V.
\end{aligned}
\end{equation}
Under suitable assumptions, problem \eqref{TRsubprob} admits a unique solution $\bar s^{(k)}$, which is used to compute the next outer TR iterate $\mu^{(k+1)} = \mu^{(k)} + \bar s^{(k)}$. 

\subsection{The projected Newton based TR-RB Method with optional enrichment}
\label{sec:TR}
Trust-region methods combined with MOR techniques have been extensively studied in, e.g.,  \cite{AFS00,BCB07,KMOSV20,QGVW2017}. Among these methods, we are interested in TR-RB algorithms. As discussed in \cite{KMOSV20}, it is advantageous to choose the NCD-corrected RB reduced functional as the model function,
i.e.~$m^{(k)}(\cdot)= \cJhatn^{(k)}(\mu^{(k)}+\cdot)$ for $k\geq 0$, where the super-index $(k)$ indicates that we use different RB spaces $V_\red^{*, (k)}$ in each iteration. 
We initialize the RB space with the starting parameter $u_{\mu^{(0)}}$, i.e.~$V^{\pr,(0)}_\red = \big\{u_{h,\mu^{(0)}}\big\}$ and $V^{\du,(0)}_\red = \big\{p_{h,\mu^{(0)}}\big\}$.
Like in \cite{KMOSV20}, we consider bilateral parameter constraints but employ a projected Newton method to solve \eqref{TRsubprob}, which has a faster local convergence compared to the projected BFGS, used in \cite{KMOSV20,QGVW2017}. We first state the TR-RB method suggested in \cite{KMOSV20,QGVW2017}, then we remark the further improvements introduced in addition to \cite{KMOSV20,QGVW2017}. The RB version of problem \eqref{TRsubprob} is 
\begin{equation}
\label{TRRBsubprob}
\min_{\widetilde{\mu}\in\Params} \cJhatn^{(k)}(\widetilde{\mu}) \quad \text{ s.t. } \quad  \frac{\Delta_{\Jhat}(\widetilde{\mu})}{\cJhatn^{(k)}(\widetilde{\mu})}\leq \varrho^{(k)},
\end{equation}
where $\widetilde{\mu}:= \mu^{(k)}+s$, the equality constraint $r_\mu^\pr(u_{\widetilde{\mu}})[v]= 0$ is hidden in the definition of $\cJhatn$
and the inequality constraints are concealed in the request $\widetilde{\mu}\in \Params$.
Due to the presence of bilateral constraints on the parameters, we introduce the projection operator $\Proj_\Params: \mathbb{R}^p\rightarrow \Params$ defined as
\begin{align*}
(\Proj_\Params(\mu))_i:= \left\{ \begin{array}{ll}
(\mu_\mathsf{a})_i & \text{if } \mu_i\leq (\mu_\mathsf{a})_i, \\
(\mu_\mathsf{b})_i & \text{if } \mu_i\geq (\mu_\mathsf{b})_i, \\
\mu_i & \text{otherwise}
\end{array} \right.  && \text{for } i=1,\ldots,P.
\end{align*}
The operator $\Proj_\Params$ is Lipschitz continuous with Lipschitz constant one; cf.~\cite{Kel99}. The additional TR constraint, instead, is treated with a backtracking technique; cf.~\cite{QGVW2017}. For solving \eqref{TRRBsubprob} at iteration $k$, the projected Newton method uses the approximated generalized Cauchy (AGC) point $\mu^{(k)}_\text{AGC}$ (cf. Definition~\ref{Def:AGC}) as warm start and generates a sequence $\{\mu^{(k,\ell)}\}_{\ell=1}^L$, where $L$ is the last Newton iteration.
In what follows, $\mu^{(k,1)}:= \mu^{(k)}_\text{AGC}$ and the TR iterate $\mu^{(k+1)}:=\mu^{(k,L)}$.
Throughout the paper the index $k$ refers to the current outer TR iteration, $\ell$ refers instead to the inner Newton iteration.
Note that $L$ may be different for each iteration $k$. To simplify the notation, we omit this dependence unless it is strictly necessary to specify it. We define
\begin{equation}
\label{eq:General_Opt_Step_point}
\mu^{(k,\ell)}(j):= \Proj_\Params(\mu^{(k,\ell)} + \kappa^j d^{(k,\ell)}) \in\Params,
\end{equation}
where $\kappa\in(0,1)$ and $d^{(k,\ell)}$ is the chosen descent direction at the iteration $(k,\ell)$. 
In our case, we make the standard choice
\begin{align*}
d^{(k,\ell)}=-(\mathcal R^{(k)}_\red(\mu^{(k,\ell)}))^{-1}\nabla_\mu \cJhatn^{(k)}(\mu^{(k,\ell)}) && \text{ for all } k,\ell\in \mathbb{N},\, \ell\geq 1,
\end{align*}
where 
\begin{align*}
\mathcal R^{(k)}_\red(\mu) = \left\{ \begin{array}{ll}
\delta_{ij} & \text{if } i\in \mathcal{A}^\varepsilon(\mu) \text{ or } j\in \mathcal{A}^\varepsilon(\mu)\\
(\cHhatn(\mu))_{i,j} & \text{otherwise},
\end{array} \right. && \text{for } \mu\in\Params.
\end{align*}
The function $\delta_{ij}$ indicates the Kronecker delta and the set $\mathcal{A}^\varepsilon$ is the $\varepsilon$-active set for the parameter constraints, i.e.
\[
\mathcal{A}^\varepsilon(\mu) = \left\{i\in\{1,\ldots,P\}\big| (\mu_\mathsf{b})_i-\mu_i \leq \varepsilon \text{ or } \mu_i-(\mu_\mathsf{a})_i\leq \varepsilon \right\}.
\]
For further details on the projected Newton method, the choice of $\varepsilon$ and its effect on convergence of the method, we refer to \cite[Section 5.5]{Kel99}.
Note that $\cHhatn(\mu)$ (and thus $\mathcal R^{(k)}_\red(\mu)$) might not be positive definite for every $\mu\in\Params$.
Therefore we use a truncated Conjugate Gradient (CG) method to compute $d^{(k,\ell)}$, where the CG terminates when a negative curvature condition criterium is triggered.
In such a way, we ensure that $d^{(k,\ell)}$ (resulting from the possible premature termination of the CG) is still a descent direction.
The truncated CG is explained in \cite[Algorithm 7.1]{NW06}. Moreover, we enforce an Armijo-type condition
\begin{subequations}
\label{Arm_and_TRcond}
\begin{equation}
\label{Armijo}\cJhatn^{(k)}(\mu^{(k,\ell)}(j)) - \cJhatn^{(k)}(\mu^{(k,\ell)}) \leq  -\frac{\kappa_{\mathsf{arm}}}{\kappa^j} \| \mu^{(k,\ell)}(j)-\mu^{(k,\ell)}\|^2_2,
\end{equation}
with $\kappa_{\mathsf{arm}}=10^{-4}$ and the additional TR constraint on $\cJhatn^{(k)}$ 
\begin{equation}
\label{TR_radius_condition} q^{(k)}(\mu^{(k,\ell)}(j)):= \frac{\Delta_{\Jhat}(\mu^{(k,\ell)}(j))}{\cJhatn^{(k)}(\mu^{(k,\ell)}(j))} \leq \delta^{(k)}
\end{equation}
\end{subequations}
by selecting $\mu^{(k,\ell+1)} = \mu^{(k,\ell)}(j^{(k,\ell)})$ for $\ell\geq 1$,
where $j^{(k,\ell)}<\infty$ is the smallest index for which \eqref{Arm_and_TRcond} holds.
From \cite{KMOSV20,QGVW2017}, we recall that the optimization sub-problem will terminate if
\begin{subequations}\label{Termination_crit_subproblem}
\begin{equation}
\label{FOC_subproblem}
\big\|\mu^{(k,\ell)}-\Proj_\Params(\mu^{(k,\ell)}-\nabla_\mu \Jhat_\red^{(k)}(\mu^{(k,\ell)}))\big\|_2\leq \tau_\text{\rm{sub}}
\end{equation}
or
\begin{equation}
\label{Cut_of_TR}
\beta_2\delta^{(k)} \leq \frac{\Delta_{\Jhat_\red^{(k)}}(\mu)}{\Jhat_\red^{(k)}(\mu)} \leq \delta^{(k)},
\end{equation}
\end{subequations}
\noindent where $\tau_\text{\rm{sub}}\in(0,1)$ is a predefined tolerance and $\beta_2\in(0,1)$, generally close to one. With condition \eqref{Cut_of_TR}, we prevent the sub-problem to spend too much time close to the boundary of the trust-region, because the model is poor in approximation; cf.~\cite{QGVW2017}. We also report the definition of AGC point for the constrained case.
\begin{definition}[AGC point for simple bounds]
	\label{Def:AGC}
	At the iteration $k$, we define the AGC point as
	\[
	\mu_\text{AGC}^{(k)}:= \mu^{(k,0)}(j^{(k)}_c)=  \Proj_\mathcal{P}(\mu^{(k,0)} + \kappa^{j^{(k)}_c} d^{(k,0)}),
	\]
	where $\mu^{(k,0)}:= \mu^{(k)}$, $d^{(k,0)}:= -\nabla_\mu \cJhatn^{(k)}(\mu^{(k,0)})$ and $j^{(k)}_c$ is the smallest non-negative integer $j$ for which $\mu^{(k,0)}(j)$ satisfies \eqref{Arm_and_TRcond} for $\ell=0$.
\end{definition}
Analogously to \cite{KMOSV20}, as an improvement over \cite{QGVW2017}, we also use a condition to enlarge the TR radius adaptively, which can significantly speed up the TR-RB method. To be more precise, we check whether the sufficient reduction predicted by the model function $\cJhatn^{(k)}$ is realized by the objective function, i.e.~
\begin{equation}
\label{TR_act_decrease}
\varrho^{(k)}:= \frac{\Jhat_h(\mu^{(k)})-\Jhat_h(\mu^{(k+1)})}{\cJhatn^{(k)}(\mu^{(k)})-\cJhatn^{(k)}(\mu^{(k+1)})}  \geq  \eta_\varrho
\end{equation}
for a tolerance $\eta_\varrho \in [3/4,1)$. Note that $\Jhat_h$ is available, after the
enrichment of the RB space \cite{KMOSV20}. In addition, since the dual solution of \eqref{eq:dual_solution} is included as snapshot, also the FOM gradient $\nabla_\mu \Jhat_h(\mu^{(k+1)})$ is available at this stage. Thus, we use it for computing the first-order critical condition for the outer TR method and hence to terminate the TR-RB algorithm. 
Notice that the choice of the (hidden) sub-problem solver differs from the one in \cite{KMOSV20,QGVW2017}, which requires the computation of the AGC point in advance, since it is not carried out naturally by the projected Newton method. Although this issue seems disadvantageous with respect to the projected BFGS method, where this computation is normally included in the process (cf.~\cite{KMOSV20,QGVW2017}), we remark that the search of the AGC point costs only one projected gradient optimization step and it is used as warm start for the projected Newton method. Therefore, the initial cost is justified by the subsequent advantage of the faster local quadratic convergence of the projected Newton method. It constitutes an improvement with respect to the projected BFGS method, in particular when the optimum is close to the boundary of the parameter set; cf.~\cite{Kel99,NW06}. 

Finally, we introduce the possibility of skipping to enrich the model if suitable conditions are satisfied. These conditions can be also used to accept the point $\mu^{(k+1)}$, since they directly imply the error-aware sufficient decrease condition \eqref{Suff_decrease_condition} for the convergence of the method; cf.~\cite{YM2013} and Section~\ref{sec:TR_convergence_analysis}. At first, we define
\begin{equation}
\label{eq:FOC_def}
g_h(\mu)  := \|\mu-\Proj_\Params(\mu-\nabla_\mu\Jhat_h(\mu))\|_2
\end{equation}
and analogously 
\[
g^{(k)}_\red(\mu) := \|\mu-\Proj_\Params(\mu-\nabla_\mu\cJhatn^{(k)}(\mu))\|_2
\]
for all $\mu\in\Params$. Then the sufficient condition for skipping the enrichment at iteration $k$ reads as follows:
\begin{equation}
\label{eq:skip_enrichment_condition}
\begin{aligned}
&&\textsf{Skip\_enrichment\_flag}(k) := \left(q^{(k)}(\mu^{(k+1)})\leq \beta_3\delta^{(k+1)}\right)\texttt{ and } \hspace{13mm} \\ &&  \left(\frac{\left|g_h(\mu^{(k+1)})-g_\red^{(k)}(\mu^{(k+1)})\right|}{g_\red^{(k)}(\mu^{(k+1)})}\leq \tau_g\right) \texttt{ and } \\ && \left(\frac{\|\nabla_\mu \Jhat_h(\mu^{(k+1)})-\nabla_\mu \Jhat^{(k)}_\red(\mu^{(k+1)})\|_2}{\|\nabla_\mu \Jhat_h(\mu^{(k+1)})\|_2 } \leq \min\{\tau_\text{\rm{grad}},\beta_3\delta^{(k+1)}\}\right)
\end{aligned}
\end{equation} 
for given $\tau_g>0$ and $\tau_\text{\rm{grad}},\beta_3\in(0,1)$. The first part of \eqref{eq:skip_enrichment_condition} indicates how much the current RB model is trustworthy in the next iteration $k+1$, the second condition is to ensure the convergence of the algorithm (cf.~Theorem~\ref{Thm:convergence_of_TR}) and the third one is to measure the RB accuracy in reconstructing the FOM gradient of $\Jhat_h$. Note that these conditions require FOM quantities. In \cite{KMOSV20}, they are accessible exactly because of the enrichment, therefore it appears contradictory to request them and then skip a basis update. Here the focus is in fact not to avoid particular FOM solves a-priori, but to exploit them in order to keep the dimension of the RB space small.
This is of particular importance when the TR-RB method takes many iterations (as seen in some examples in \cite{KMOSV20}), where an unconditional enrichment in each iteration would lead to overfitted and too large RB spaces, slowing down the computation in the long run.
\begin{algorithm2e}
	Initialize the ROM at $\mu^{(0)}$, set $k=0$ and \textsf{Loop\_flag}$=$\textsf{True}\; 
	\While{
		{\normalfont\textsf{Loop\_flag}}}{
		 Compute the AGC point $\mu^{(k)}_\text{AGC}$\;
		 Compute $\mu^{(k+1)}$ as solution of \eqref{TRRBsubprob} with stopping criteria \eqref{Termination_crit_subproblem}\label{TRRB-optstep}\;
		\uIf{\label{Suff_condition_TRRB}$\cJhatn^{(k)}(\mu^{(k+1)})+\Delta_{\Jhat_\red^{(k)}}(\mu^{(k+1)})<\cJhatn^{(k)}(\mu^{(k)}_\text{\rm{AGC}})$} {
			 Accept $\mu^{(k+1)}$, set $\delta^{(k+1)}=\delta^{(k)}$, compute $\varrho^{(k)}$ and $g_h(\mu^{(k+1)})$\label{AcceptingIterateSufficientCondition}\;
				\eIf { $g_h(\mu^{(k+1)}) \leq \tau_{\text{\rm{FOC}}}$ } {
					Set \textsf{Loop\_flag}$=$\textsf{False}\;
				} 
				{
					\If {$\varrho^{(k)}\geq\eta_\varrho$}  {
						Enlarge the TR radius $\delta^{(k+1)} = \beta_1^{-1}\delta^{(k)}$\;	
					}
					\If {{\normalfont\textbf{not}} {\normalfont\textsf{Skip\_enrichment\_flag}$(k)$}}
					{
						Update the RB model at $\mu^{(k+1)}$ 
						\label{UpdateRBModelSufficientCondition}\;
					}
				}
		}
		\uElseIf {\label{Nec_condition_TRRB}$\cJhatn^{(k)}(\mu^{(k+1)})-\Delta_{\Jhat_\red^{(k)}}(\mu^{(k+1)})> \cJhatn^{(k)}(\mu^{(k)}_\text{\rm{AGC}})$} {
			\If { $\beta_1\delta^{(k)} \leq \delta_{\text{\rm{min}}}$ \textbf{ or } \normalfont\textsf{Skip\_enrichment\_flag}$(k-1)$ \label{forced_enrichment}} {
				Update the RB model at $\mu^{(k+1)}$\; 
			}
			Reject $\mu^{(k+1)}$, shrink the radius $\delta^{(k+1)} = \beta_1\delta^{(k)}$ and go to \ref{TRRB-optstep}\; 
		}
		\Else {
			Compute $\Jhat_h(\mu^{(k+1)})$, $g_h(\mu^{(k+1)})$,
			$\varrho^{(k)}$
			  and set $\delta^{(k+1)} = \beta_1^{-1}\delta^{(k)}$\; 
			\eIf { $g_h(\mu^{(k+1)})\leq \tau_{\text{\rm{FOC}}}$ } {
				Set \textsf{Loop\_flag}$=$\textsf{False}\;
			} {
			\uIf { \label{AcceptingIterateNoUpdateByExactComputation_condition}{\normalfont\textsf{Skip\_enrichment\_flag}$(k)$} \textbf{ and } $\varrho^{(k)}\geq \eta_\varrho$}
			{ Accept $\mu^{(k+1)}$\label{AcceptingIterateNoUpdateByExactComputation}\;}
			\uElseIf {$\Jhat_h(\mu^{(k+1)}) \leq \cJhatn^{(k)}(\mu^{(k)}_\text{\rm{AGC}})$} {
				Accept $\mu^{(k+1)}$ and update the RB model
				\label{AcceptingIterateAndUpdateByExactComputation}\;	
				\If {$\varrho^{(k)}<\eta_\varrho$} {
				Set $\delta^{(k+1)}=\delta^{(k)}$\;
				}		
			}
			\Else{
				\If { $\beta_1\delta^{(k)} \leq \delta_{\text{\rm{min}}}$ \textbf{ or } \normalfont\textsf{Skip\_enrichment\_flag}$(k-1)$ \label{forced_enrichment_2}} {
					Update the RB model at $\mu^{(k+1)}$\; 
				}
				Reject $\mu^{(k+1)}$, set $\delta^{(k+1)} = \beta_1\delta^{(k)}$ and go to \ref{TRRB-optstep}\;		
			}
		}
		}
		Set $k=k+1$\;		
	}
  \caption{\footnotesize{TR-RB algorithm}}
	\label{Alg:TR-RBmethod}
\end{algorithm2e}

\subsection{Convergence result for the improved method}
\label{sec:TR_convergence_analysis}
In this section, we improve the convergence analysis done in \cite{KMOSV20}, first stating required assumptions from \cite{KMOSV20}. Condition \eqref{TR_radius_condition} imposes the following assumption to guarantee the well-posedness of the TR-RB algorithm.
\begin{assumption}
\label{asmpt:bound_J}
The cost functional $\J(u,\mu)$ is strictly positive for all $u\in V$ and all parameters $\mu\in\Params$. 
\end{assumption}
As also remarked in \cite{KMOSV20}, this assumption is not too restrictive.
Moreover, as pointed out in \cite{KMOSV20,QGVW2017,YM2013}, it is necessary that an error-aware sufficient decrease condition,
\begin{align}
\label{Suff_decrease_condition}
\cJhatn^{(k+1)}(\mu^{(k+1)})\leq \cJhatn^{(k)}(\mu_\text{\rm{AGC}}^{(k)}) && \text{ for all } k\in\mathbb{N},
\end{align}
is fulfilled at each iteration $k$ of the TR-RB algorithm. Cheaply computable sufficient and necessary conditions for \eqref{Suff_decrease_condition} in Algorithm~\ref{Alg:TR-RBmethod} (Step~\ref{Suff_condition_TRRB} and Step~\ref{Nec_condition_TRRB},
respectively) are considered to guarantee \eqref{Suff_decrease_condition}. The TR-RB algorithm rejects, then, any computed point which does not satisfy \eqref{Suff_decrease_condition}.
Algorithm~\ref{Alg:TR-RBmethod} may be trapped in an infinite loop, where every computed point is rejected and the TR radius is shrunk all time. This situation will not lead to convergence. We point out that this never happened in our numerical tests. On one hand, we consider two safe guards: the first is to force an update of the RB model when the TR radius is below a predefined threshold $0<\delta_{\text{\rm{min}}}\ll 1$ and a safety termination criteria, which is triggered when the TR radius is smaller than the double machine precision $\tau_{\rm{\text{mac}}}$. On the other hand, for showing convergence of Algorithm~\ref{Alg:TR-RBmethod}, we assume that this can not happen.
\begin{assumption}
\label{asmpt:rejection}
For each $k\geq 0$, there exists a radius $\delta^{(k)}>\tau_{\rm{\text{mac}}}>0$ for which there is a solution of
\eqref{TRRBsubprob} satisfying $\eqref{Suff_decrease_condition}$.
\end{assumption}
Another issue that might appear due to skipping a RB basis update is that at iteration $k-1$ the optimization subproblem terminates for \eqref{Cut_of_TR}, the point is accepted, the enrichment is skipped and the radius is enlarged to $\delta^{(k)}=\beta_1^{-1}\delta^{(k-1)}$, but then at iteration $k$ the point $\mu^{(k+1)}$ is rejected, implying to shrink the radius to the old value $\delta^{(k-1)}$. If the model is not updated also at this step, we are solving again the same subproblem at the next iteration starting at a point which was already triggering \eqref{Cut_of_TR}, therefore our step would be to compute only the AGC point. Although the method will converge anyway, this situation might repeat several times before we escape this ``problematic'' region, resulting in a waste of computational time, which contrasts all the time gained by the possibility of not enriching. Therefore, we impose an enrichment of the RB model, when the radius is shrunk at iteration $k$ and we skipped the basis update at iteration $k-1$; cf.~Step~\ref{forced_enrichment} and Step~\ref{forced_enrichment_2} of Algorithm~\ref{Alg:TR-RBmethod}. 
Note that to improve the convergence results, we required an additional assumption with respect to \cite{KMOSV20} (namely locally Lipschitz-continuous second derivatives in Assumption~\ref{asmpt:differentiability}), which we also require from the ROM.
\begin{assumption}
\label{asmpt:Lip_cont}
The ROM gradient $\nabla_\mu \Jhat_\red^{(k)}$ is uniformly Lipschitz- \\continuous, i.e. there exists a constant $C_L>0$ independent of $k$ such that
\[
\|\nabla_\mu \Jhat_\red^{(k)}(\mu)-\nabla_\mu\Jhat_\red^{(k)}(\nu)\|_2\leq C_L\|\mu-\nu\|_2
\]
holds for all $\mu,\nu\in\Params$ and all $k\in\mathbb{N}$. Similarly, the ROM second derivatives of $\Jhat_\red^{(k)}$ are locally Lipschitz-continuous.
\end{assumption}
This assumption restricts the set of cost functionals, nevertheless it guarantees a locally faster convergence behavior for this class. Algorithm~\ref{Alg:TR-RBmethod} is anyway still applicable to the general class of quadratic functionals -- and also converges in this case. We remark that Assumption~\ref{asmpt:Lip_cont} is needed for proving convergence of the method in an infinite-dimensional perspective. In the particular case of the RB model function, it is possible to show that this is satisfied. The proof follows from the fact that the RB model will exactly approximate the cost functional $\Jhat_h$ after a finite number of updates. As a direct consequence of Assumption~{\rm\ref{asmpt:Lip_cont}}, we have the following result:
\begin{corollary}
\label{cor:bound_grad}
Let Assumption~{\rm\ref{asmpt:Lip_cont}} be satisfied. Then there exists a constant $C>0$ such that for any $k\in\mathbb{N}$ it holds
\[
\|\nabla_\mu \Jhat_\red^{(k)}(\mu^{(k)})\|_2\leq C.
\]
\end{corollary}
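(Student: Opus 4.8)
The plan is to combine the compactness of the parameter set with the observation that, although the reduced space $V_\red^{*,(k)}$ changes from one outer iteration to the next, the reduced gradient \emph{evaluated at the initial parameter $\mu^{(0)}$} does not. This supplies the one uniform-in-$k$ anchor that Lipschitz continuity on its own cannot provide.

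First I would record the geometric facts. Since $\Params$ is a bounded box, $D:=\diam(\Params)<\infty$, and every outer iterate satisfies $\mu^{(k)}\in\Params$, because $\mu^{(k+1)}$ is always obtained through the projection $\Proj_\Params$ (cf. Definition~\ref{Def:AGC} and \eqref{eq:General_Opt_Step_point}); hence $\|\mu^{(k)}-\mu^{(0)}\|_2\le D$ for all $k$. Next I would argue that $\nabla_\mu\Jhat_\red^{(k)}(\mu^{(0)})$ is independent of $k$. The RB spaces are initialized with $V_\red^{\pr,(0)}=\{u_{h,\mu^{(0)}}\}$ and $V_\red^{\du,(0)}=\{p_{h,\mu^{(0)}}\}$ and are only ever enlarged by Algorithm~\ref{Alg:TR-RBmethod}, so $u_{h,\mu^{(0)}}\in V_\red^{\pr,(k)}$ and $p_{h,\mu^{(0)}}\in V_\red^{\du,(k)}$ for every $k$. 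Because $u_{h,\mu^{(0)}}$ satisfies the discrete primal equation against all of $V_h$, it in particular satisfies the RB approximate primal equation \eqref{eq:state_red} at $\mu=\mu^{(0)}$ over $V_\red^{\pr,(k)}$; by uniqueness of the reduced solution (coercivity of $a_{\mu^{(0)}}(\cdot,\cdot)$) this forces $u_{\red,\mu^{(0)}}=u_{h,\mu^{(0)}}$, and the analogous argument gives $p_{\red,\mu^{(0)}}=p_{h,\mu^{(0)}}$. Consequently $r_{\mu^{(0)}}^\pr(u_{\red,\mu^{(0)}})$ and $r_{\mu^{(0)}}^\du(u_{\red,\mu^{(0)}},p_{\red,\mu^{(0)}})$ vanish on $V_h$, so the auxiliary functions $z_{\red,\mu^{(0)}},w_{\red,\mu^{(0)}}$ of Proposition~\ref{prop:true_corrected_reduced_gradient_adj} are zero and the gradient formula there collapses to $\nabla_\mu\Jhat_\red^{(k)}(\mu^{(0)})=\nabla_\mu\Jhat_h(\mu^{(0)})$. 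Thus $C_0:=\|\nabla_\mu\Jhat_h(\mu^{(0)})\|_2$ is finite and does not depend on $k$.

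The conclusion then follows by the triangle inequality together with Assumption~\ref{asmpt:Lip_cont}: for every $k$,
\begin{align*}
\|\nabla_\mu\Jhat_\red^{(k)}(\mu^{(k)})\|_2
&\le \|\nabla_\mu\Jhat_\red^{(k)}(\mu^{(0)})\|_2 + \|\nabla_\mu\Jhat_\red^{(k)}(\mu^{(k)})-\nabla_\mu\Jhat_\red^{(k)}(\mu^{(0)})\|_2 \\
&\le C_0 + C_L\,\|\mu^{(k)}-\mu^{(0)}\|_2 \le C_0 + C_L D =: C,
\end{align*}
which is the asserted bound with a constant $C$ independent of $k$.

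The only substantive point — and hence the main obstacle if one tries to prove the statement from Assumption~\ref{asmpt:Lip_cont} alone — is the uniformity in $k$: a $C_L$-Lipschitz function on a compact set of diameter $D$ has oscillation at most $C_L D$, but bounding its supremum still requires controlling its value at one point uniformly over $k$. The resolution is exactly the second step above, namely that every RB space permanently retains the snapshots at $\mu^{(0)}$, which pins $\nabla_\mu\Jhat_\red^{(k)}(\mu^{(0)})$ to the fixed FOM value $\nabla_\mu\Jhat_h(\mu^{(0)})$ and thereby makes the triangle-inequality estimate uniform. Everything else is routine.
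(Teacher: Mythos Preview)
Your proof is correct and, in fact, more careful than the paper's treatment: the paper does not give a proof at all, simply declaring the corollary to be ``a direct consequence of Assumption~\ref{asmpt:Lip_cont}''. You rightly observe that uniform Lipschitz continuity with a $k$-independent constant $C_L$ controls only the \emph{oscillation} of $\nabla_\mu\Jhat_\red^{(k)}$ over $\Params$, not its absolute size, and that an additional $k$-uniform anchor is required. Your choice of anchor---the initial parameter $\mu^{(0)}$, whose primal and dual FOM snapshots are permanently contained in every $V_\red^{\pr,(k)}$ and $V_\red^{\du,(k)}$---is exactly right, and your verification that Galerkin reproduction forces $u_{\red,\mu^{(0)}}=u_{h,\mu^{(0)}}$, $p_{\red,\mu^{(0)}}=p_{h,\mu^{(0)}}$, hence $z_{\red,\mu^{(0)}}=w_{\red,\mu^{(0)}}=0$ and $\nabla_\mu\Jhat_\red^{(k)}(\mu^{(0)})=\nabla_\mu\Jhat_h(\mu^{(0)})$, is clean and complete.

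One could also argue, in the spirit of the paper's remark preceding the corollary, that only finitely many distinct RB models can occur (since after sufficiently many enrichments the ROM reproduces the FOM), so that one is taking a maximum over finitely many continuous functions on a compact set. Your argument is preferable, however, because it does not rely on this finiteness and remains valid even when enrichment is skipped arbitrarily often, which is precisely the regime Algorithm~\ref{Alg:TR-RBmethod} is designed to allow.
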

Furthermore, the following property of the projection operator $\Proj_\Params$ holds:
\begin{lemma} \label{lemma:PropertyOfPParams}
	Let $\mu\in\Params$ and $d \in \R^P$ be arbitrary. Then it holds
	\begin{equation} 
	\label{eq:PropertyOfPParams}
	\left\Vert \mu - \Proj_\Params \left( \mu - t d \right) \right\Vert_2 \geq t \left\Vert \mu - \Proj_\Params \left( \mu - d \right) \right\Vert_2
	\end{equation}
	for all $t \in [0,1]$.
\end{lemma}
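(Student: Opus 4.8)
The plan is to exploit the product (box) structure of $\Params$ to reduce the statement to a one-dimensional inequality, and then dispatch that scalar inequality with a short concavity argument.

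\emph{Reduction to one dimension.} Since $\Params = \{\mu\in\R^P : \mu_\mathsf{a} \le \mu \le \mu_\mathsf{b}\}$ is a Cartesian product of intervals, the operator $\Proj_\Params$ acts componentwise: writing $\pi_i(x) := \max\{(\mu_\mathsf{a})_i,\, \min\{(\mu_\mathsf{b})_i, x\}\}$ for the scalar projection onto the $i$-th interval, one has $(\Proj_\Params(y))_i = \pi_i(y_i)$ for all $y\in\R^P$. Hence
\[
\left\Vert \mu - \Proj_\Params(\mu - td) \right\Vert_2^2 = \sum_{i=1}^{P} \big|\mu_i - \pi_i(\mu_i - td_i)\big|^2,
\]
and likewise for $t=1$. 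Therefore it suffices to prove, for each fixed index $i$, the scalar inequality $\big|\mu_i - \pi_i(\mu_i - td_i)\big| \ge t\,\big|\mu_i - \pi_i(\mu_i - d_i)\big|$ for $t\in[0,1]$; squaring, summing over $i$, and taking square roots then yields \eqref{eq:PropertyOfPParams}.

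\emph{The scalar inequality.} Fix an index and drop it from the notation: let $[a,b]\ni\mu$, $d\in\R$, and $\pi = \Proj_{[a,b]}$. If $d=0$ both sides vanish. The case $d<0$ reduces to the case $d>0$ via the elementary identity $\Proj_{[-b,-a]}(-x) = -\Proj_{[a,b]}(x)$, applied with $\mu\mapsto -\mu$, $d\mapsto -d$, $[a,b]\mapsto[-b,-a]$, which leaves both sides of the inequality unchanged. So assume $d>0$. Then $\mu - td \le \mu \le b$ for every $t\in[0,1]$, hence $\pi(\mu - td) = \max\{a,\, \mu - td\}$, and consequently
\[
f(t) := \mu - \pi(\mu - td) = \min\{\mu - a,\, td\} \ge 0 = \big|\mu - \pi(\mu - td)\big|.
\]
The map $f$ is the minimum of two affine functions of $t$, hence concave on $[0,1]$, and $f(0) = \min\{\mu - a, 0\} = 0$ because $\mu\ge a$. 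Concavity together with $f(0)=0$ gives, for $t\in[0,1]$,
\[
f(t) = f\big(t\cdot 1 + (1-t)\cdot 0\big) \ge t\,f(1) + (1-t)\,f(0) = t\,f(1),
\]
and $f(1) = \min\{\mu - a,\, d\} = \mu - \pi(\mu - d) = \big|\mu - \pi(\mu - d)\big|$ (again since $\mu - d \le \mu \le b$). This is exactly the claimed scalar inequality.

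\emph{Main obstacle.} There is no real obstacle; the only point demanding care is checking that, under the hypotheses $t\in[0,1]$ and $\mu\in\Params$, the shifted point $\mu - td$ stays on the side of the box where the piecewise projection formula collapses to a single branch ($\max\{a,\cdot\}$ when $d>0$) — which is precisely where $t\le 1$ is used. A coordinate-free alternative would invoke the classical fact that $t\mapsto t^{-1}\left\Vert \mu - \Proj_\Params(\mu - td) \right\Vert_2$ is nonincreasing (Calamai--Mor\'e), but the componentwise argument above is shorter and self-contained.
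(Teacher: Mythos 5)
Your proof is correct, and at the scalar level it takes a genuinely different route from the paper. Both arguments share the same first step: since $\Params$ is a box, $\Proj_\Params$ acts componentwise, so it suffices to prove the inequality $\left| \mu_i - \left(\Proj_\Params(\mu - td)\right)_i \right| \geq t \left| \mu_i - \left(\Proj_\Params(\mu - d)\right)_i \right|$ for each coordinate. The paper then proceeds by an explicit case distinction on where the two projections land — both clipped at a bound (Case 1), both in the open interval (Case 2a), and the mixed case (Case 2b), which is handled via the auxiliary step length $\tilde{t}$ at which the bound is first hit. You instead normalize the sign (reducing $d<0$ to $d>0$ by reflection, with $d=0$ trivial), observe that for $d>0$ and $t\in[0,1]$ the projection collapses to a single branch so that $f(t) := \mu - \pi(\mu - td) = \min\{\mu - a,\, td\}$, and conclude from concavity of $f$ and $f(0)=0$ that $f(t)\geq t f(1)$. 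This subsumes all three of the paper's cases in one stroke and makes transparent where $t\leq 1$ is used; the paper's version, by contrast, is more elementary in that it never appeals to convexity/concavity and exhibits explicitly which constraint becomes active in each case. One cosmetic slip: the chain ``$\min\{\mu-a, td\} \geq 0 = |\mu - \pi(\mu - td)|$'' should read that $f(t)\geq 0$ and \emph{hence} $f(t) = |\mu - \pi(\mu - td)|$; as written it asserts the absolute value is zero, which is clearly not what you mean and does not affect the argument. Your closing remark that the statement also follows from the classical monotonicity of $t \mapsto t^{-1}\|\mu - \Proj_\Params(\mu - td)\|_2$ (Calamai--Mor\'e) is accurate and would be a third, citation-based route, which the paper avoids in favor of a self-contained proof.
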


\begin{proof}
	Let $\mu \in \Params$, $d \in \R^P$ and $t \in [0,1]$ be arbitrary. We prove the statement by showing that 
	\begin{equation} \label{eq:PropertyOfPParams:Componentwise}
	\left| \mu_i - \left( \Proj_\Params \left( \mu - t d \right) \right)_i \right| \geq t \left|  \mu_i - \left( \Proj_\Params \left( \mu - d \right) \right)_i \right|
	\end{equation}
	holds for all components $i= 1,\ldots,P$. Let $i \in \{1,\ldots,P\}$ be arbitrary. \\ \emph{Case (1)}: $\left(  \Proj_\Params \left( \mu - t d \right) \right)_i \in \{ (\mu_\mathsf{a})_i,(\mu_\mathsf{b})_i \}$. \\
	It clearly holds $\left(  \Proj_\Params \left( \mu - t d \right) \right)_i = \left(  \Proj_\Params \left( \mu - d \right) \right)_i$, so that
		\[
		\left| \mu_i - \left(  \Proj_\Params \left( \mu - t d \right) \right)_i \right| = \left| \mu - \left(  \Proj_\Params \left( \mu - d \right) \right)_i \right| \geq t \left| \mu - \left(  \Proj_\Params \left( \mu - d \right) \right)_i \right|.
		\]
	\emph{Case (2a)}: $\left(  \Proj_\Params \left( \mu - t d \right) \right)_i \in ((\mu_\mathsf{a})_i,(\mu_\mathsf{b})_i)$ and $\left(  \Proj_\Params \left( \mu - d \right) \right)_i \in ((\mu_\mathsf{a})_i,(\mu_\mathsf{b})_i)$. We can conclude
		\[
		\left| \mu_i - \left(  \Proj_\Params \left( \mu - t d \right) \right)_i \right| = t \left| d_i \right| = t \left| \mu_i - \left(  \Proj_\Params \left( \mu - d \right) \right)_i \right|,
		\]
		which is what we have to show. \\
		\emph{Case (2b)}: $\left(  \Proj_\Params \left( \mu - t d \right) \right)_i \in ((\mu_\mathsf{a})_i,(\mu_\mathsf{b})_i)$ and 
		$\left(  \Proj_\Params \left( \mu - d \right) \right)_i \kern-0.1em\in\kern-0.1em \{ (\mu_\mathsf{a})_i,(\mu_\mathsf{b})_i \}$.
		We define $\tilde{t} := \frac{\mu_i -(\mu_{\mathsf{a},\mathsf{b}})_i}{d_i}$.
		Note that $t<\tilde{t}\leq 1$. Then it holds 
		\begin{align*}
		\left| \mu_i - \left( \Proj_\Params \left( \mu - t d \right) \right)_i \right| & = t \left| d_i \right| = \frac{t}{\tilde{t}} \left| \mu_i - \left( \Proj_\Params \left( \mu - \tilde{t} d \right) \right)_i \right| \\
		& = \frac{t}{\tilde{t}} \left| \mu_i - \left( \Proj_\Params \left( \mu - d \right) \right)_i \right| \geq t \left| \mu_i - \left( \Proj_\Params \left( \mu - d \right) \right)_i \right|.
		\end{align*} 
	Thus, in all cases, for any component $i \in \{1,\ldots,P\}$ the inequality \eqref{eq:PropertyOfPParams:Componentwise} holds.
	Now it can be immediately concluded that \eqref{eq:PropertyOfPParams} holds as well.
\end{proof} 
The next lemma is needed to show convergence of the algorithm.
\begin{lemma}
  \label{lemma:BothRBErrorsSmaller2}
	For every iterate $\mu^{(k)}$ ($k \in \N$) of Algorithm~\ref{Alg:TR-RBmethod}, it holds
	\begin{equation}
	q^{(k)}(\mu^{(k)}) \leq \beta_3\delta^{(k)} \quad \text{ and } \quad \frac{\left| g_\red^{(k)}(\mu^{(k)}) - g_h(\mu^{(k)}) \right|}{g_\red^{(k)}(\mu^{(k)})} \leq \tau_g \label{eq:BothRBErrorsSmaller2}
	\end{equation}
\end{lemma}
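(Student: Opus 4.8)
The plan is to argue by induction on the outer iteration index $k$, distinguishing according to which branch of Algorithm~\ref{Alg:TR-RBmethod} produced the iterate $\mu^{(k)}$, and exploiting that whenever the RB model is updated at $\mu^{(k)}$ the error quantities involved vanish exactly (by Assumption~\ref{asmpt:truth} and the fact that enrichment makes $\cJhatn^{(k)}$ and $\Jhat_h$, together with their gradients, coincide at the enrichment point). The base case $k=0$ is immediate: the ROM is initialized at $\mu^{(0)}$, so $u_{\red,\mu^{(0)}} = u_{h,\mu^{(0)}}$ and $p_{\red,\mu^{(0)}} = p_{h,\mu^{(0)}}$, whence $\Delta_{\Jhat_\red^{(0)}}(\mu^{(0)}) = 0$ (giving $q^{(0)}(\mu^{(0)}) = 0 \le \beta_3\delta^{(0)}$) and $\nabla_\mu\cJhatn^{(0)}(\mu^{(0)}) = \nabla_\mu\Jhat_h(\mu^{(0)})$ (giving $g_\red^{(0)}(\mu^{(0)}) = g_h(\mu^{(0)})$, so the second quotient is $0 \le \tau_g$ — with the usual convention if $g_\red^{(0)}(\mu^{(0)})=0$).

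For the induction step, I would assume the claim at index $k$ (or rather use it to reason about the transition to $k+1$) and examine how $\mu^{(k+1)}$ was accepted. There are essentially three ways $\mu^{(k+1)}$ becomes the new iterate and the RB space is \emph{not} re-enriched at it: (a) it is accepted in Step~\ref{AcceptingIterateSufficientCondition} and \textsf{Skip\_enrichment\_flag}$(k)$ is true, so the model stays $\cJhatn^{(k+1)} = \cJhatn^{(k)}$ at the relevant point; (b) it is accepted in Step~\ref{AcceptingIterateNoUpdateByExactComputation} via \textsf{Skip\_enrichment\_flag}$(k)$. In either of these cases the three inequalities defining \textsf{Skip\_enrichment\_flag}$(k)$ in \eqref{eq:skip_enrichment_condition} hold at $\mu^{(k+1)}$: the first gives $q^{(k)}(\mu^{(k+1)}) \le \beta_3\delta^{(k+1)}$, which since the model is unchanged is exactly $q^{(k+1)}(\mu^{(k+1)}) \le \beta_3\delta^{(k+1)}$, the first claim; the second gives the bound $\tau_g$ on the relative gradient-criticality discrepancy, which is exactly the second claim (again because $g_\red^{(k+1)} = g_\red^{(k)}$ and $\Jhat_\red^{(k+1)} = \Jhat_\red^{(k)}$ at that point). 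In the remaining case the RB model \emph{is} updated at $\mu^{(k+1)}$ (Steps~\ref{UpdateRBModelSufficientCondition}, \ref{AcceptingIterateAndUpdateByExactComputation}, or the forced enrichments), and then $\cJhatn^{(k+1)}(\mu^{(k+1)}) = \Jhat_h(\mu^{(k+1)})$ with $\Delta_{\Jhat_\red^{(k+1)}}(\mu^{(k+1)}) = 0$ and $\nabla_\mu\cJhatn^{(k+1)}(\mu^{(k+1)}) = \nabla_\mu\Jhat_h(\mu^{(k+1)})$, so both quantities in \eqref{eq:BothRBErrorsSmaller2} are $0$ and the inequalities hold trivially (using $\delta^{(k+1)}>0$, $\beta_3>0$, $\tau_g>0$).

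The main obstacle — and the point requiring care — is a bookkeeping issue rather than an analytic one: one must check that \emph{every} path through the while-loop that ends with $k$ incremented and $\mu^{(k+1)}$ as the next iterate falls into one of the two categories above (enrichment at $\mu^{(k+1)}$, or skip-flag verified at $\mu^{(k+1)}$). In particular the rejection branches (Step~\ref{Nec_condition_TRRB} and the final \texttt{else}) loop back to Step~\ref{TRRB-optstep} without incrementing $k$, so they do not produce a new iterate; and one should confirm that the ``exact computation'' middle branch accepts $\mu^{(k+1)}$ only either through the skip-flag (Step~\ref{AcceptingIterateNoUpdateByExactComputation}) or with a model update (Step~\ref{AcceptingIterateAndUpdateByExactComputation}). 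Finally, one must observe that after a forced enrichment at $\mu^{(k+1)}$ in Step~\ref{forced_enrichment} or \ref{forced_enrichment_2} the point is \emph{rejected} and we re-enter the loop, so the genuinely new iterate is produced only later, and the model has been updated in the meantime, keeping the invariant intact. Once this case analysis is laid out, each case closes in one line using the definitions in \eqref{eq:skip_enrichment_condition}, Assumption~\ref{asmpt:truth}, and the exactness of the enriched model.
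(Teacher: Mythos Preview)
Your proposal is correct and follows essentially the same route as the paper: induction on $k$, with the base case using exactness of the enriched model at $\mu^{(0)}$, and the step handled by a case distinction on how $\mu^{(k+1)}$ was accepted---either the model is updated (both quantities vanish) or \textsf{Skip\_enrichment\_flag}$(k)$ holds and its first two clauses give precisely \eqref{eq:BothRBErrorsSmaller2} since $\cJhatn^{(k+1)}=\cJhatn^{(k)}$. Your additional bookkeeping about the rejection/forced-enrichment branches not producing a new iterate is correct and slightly more explicit than the paper, which simply enumerates the three accepting lines.
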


\begin{proof}
	We show this statement by induction over $k \in \mathbb{N}$. For $k = 0$ we trivially have 
	\[
	q^{(0)}(\mu^{(0)}) = 0 \quad \text{ and } \quad \frac{\left| g_\red^{(0)}(\mu^{(0)}) - g_h(\mu^{(0)}) \right|}{g_\red^{(0)}(\mu^{(0)})} = 0,
	\]
	since the RB model was constructed at $\mu^{(0)}$. \\
	Now assume that \eqref{eq:BothRBErrorsSmaller2} is satisfied for all $1 \leq l \leq k$ for some $k \in \N$ and let $\mu^{(k+1)}$ be the new accepted iterate. 
	\begin{enumerate}
		\item $\mu^{(k+1)}$ is accepted in line~\ref{AcceptingIterateSufficientCondition}: \\
		Then the RB model is updated in line~\ref{UpdateRBModelSufficientCondition}, if
		\[
		\begin{aligned}
		& q^{(k)}(\mu^{(k+1)}) > \beta_3\delta^{(k+1)} \quad \text{ or } \quad \frac{\left| g_\red^{(k)}(\mu^{(k+1)}) - g_h(\mu^{(k+1)}) \right|}{g_\red^{(k)}(\mu^{(k+1)})} > \tau_g \\ & \text{ or } \left(\frac{\|\nabla_\mu \Jhat_h(\mu^{(k+1)})-\nabla_\mu \Jhat^{(k)}_\red(\mu^{(k+1)})\|_2}{\|\nabla_\mu \Jhat_h(\mu^{(k+1)})\|_2 }> \min\{\tau_\text{\rm{grad}},\beta_3\delta^{(k+1)}\}\right).
		\end{aligned}
		\]
		So, on one hand, if the RB model is not updated in line~\ref{UpdateRBModelSufficientCondition}, this implies that 
		\[
		q^{(k+1)}(\mu^{(k+1)}) = q^{(k)}(\mu^{(k+1)}) \leq \beta_3\delta^{(k+1)}
		\] 
		and
		\[
		\frac{\left| g_\red^{(k+1)}(\mu^{(k+1)}) - g_h(\mu^{(k+1)}) \right|}{g_\red^{(k+1)}(\mu^{(k+1)})} = \frac{\left| g_\red^{(k)}(\mu^{(k+1)}) - g_h(\mu^{(k+1)}) \right|}{g_\red^{(k)}(\mu^{(k+1)})} \leq \tau_g
		\]
		hold. On the other hand, if the RB model is updated, we have
		\[
		q^{(k+1)}(\mu^{(k+1)}) = 0 \quad \text{ and } \quad \frac{\left| g_\red^{(k+1)}(\mu^{(k+1)}) - g_h(\mu^{(k+1)}) \right|}{g_\red^{(k+1)}(\mu^{(k+1)})} = 0,
		\]
		so that the claim follows in both cases.
		\item $\mu^{(k+1)}$ is accepted in line~\ref{AcceptingIterateNoUpdateByExactComputation}: \\
		In this case the RB model is not updated. Thus, we can directly conclude from the previous if-condition in line~\ref{AcceptingIterateNoUpdateByExactComputation_condition} and the enlarged TR radius
		\[
		q^{(k+1)}(\mu^{(k+1)}) = q^{(k)}(\mu^{(k+1)}) \leq \beta_3 \beta_1^{-1} \delta^{(k)} = \beta_3\delta^{(k+1)}
		\] 
		and
		\[
		\frac{\left| g_\red^{(k+1)}(\mu^{(k+1)}) - g_h(\mu^{(k+1)}) \right|}{g_\red^{(k+1)}(\mu^{(k+1)})} = \frac{\left| g_\red^{(k)}(\mu^{(k+1)}) - g_h(\mu^{(k+1)}) \right|}{g_\red^{(k)}(\mu^{(k+1)})} \leq \tau_g.
		\]
		Hence, the claim holds also in this case.
		\item $\mu^{(k+1)}$ is accepted in line~\ref{AcceptingIterateAndUpdateByExactComputation}: \\
		Then the RB model is updated at $\mu^{(k+1)}$, so that we have 
		\[
		q^{(k+1)}(\mu^{(k+1)}) = 0 \quad \text{ and } \quad \frac{\left| g_\red^{(k+1)}(\mu^{(k+1)}) - g_h(\mu^{(k+1)}) \right|}{g_\red^{(k+1)}(\mu^{(k+1)})} = 0.
		\]
		Thus, the claim holds trivially.
		
	\end{enumerate}
	In total, we have shown the claim for every possible case, which concludes the proof.
\end{proof}
We continue the convergence analysis by showing a result about the AGC point $\mu_\text{\rm{AGC}}^{(k)}$. We recall the following results from \cite[Corollary 5.4.4]{Kel99}:
\begin{lemma}
\label{lemma:kelley}
For all $j,k\in\mathbb{N}$ and $\kappa\in(0,1)$, we have
\begin{align*}
\|\mu^{(k)}-\Proj_\Params(\mu^{(k)}-&\kappa^j\nabla_\mu\Jhat^{(k)}_\red(\mu^{(k)}))\|^2_2 \\ &\leq \kappa^j \nabla_\mu\Jhat^{(k)}_\red(\mu^{(k)})\cdot(\mu^{(k)}-\Proj_\Params(\mu^{(k)}-\kappa^j\nabla_\mu\Jhat^{(k)}_\red(\mu^{(k)})))
\end{align*}
\end{lemma}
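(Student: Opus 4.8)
The plan is to deduce Lemma~\ref{lemma:kelley} from the variational (obtuse-angle) characterization of the metric projection onto the closed convex set $\Params\subset\R^P$, so that the statement reduces to a one-line manipulation; alternatively one may simply quote \cite[Corollary~5.4.4]{Kel99}, from which it is taken. To fix notation I would abbreviate $g:=\nabla_\mu\Jhat^{(k)}_\red(\mu^{(k)})\in\R^P$ and $\lambda:=\kappa^j>0$, and recall that for every $z\in\R^P$ the projection $\Proj_\Params(z)$ is the unique point of $\Params$ satisfying
\[
\bigl(z-\Proj_\Params(z)\bigr)\cdot\bigl(w-\Proj_\Params(z)\bigr)\le 0\qquad\text{for all }w\in\Params.
\]

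The key step is to apply this inequality with the particular choices $z:=\mu^{(k)}-\lambda g$ and $w:=\mu^{(k)}$; the latter is admissible because every iterate $\mu^{(k)}$ produced by Algorithm~\ref{Alg:TR-RBmethod} lies in $\Params$ (it is obtained through $\Proj_\Params$). Writing $P_\lambda:=\Proj_\Params(\mu^{(k)}-\lambda g)$, this gives
\[
\bigl(\mu^{(k)}-P_\lambda-\lambda g\bigr)\cdot\bigl(\mu^{(k)}-P_\lambda\bigr)\le 0 ,
\]
and expanding the inner product yields $\|\mu^{(k)}-P_\lambda\|_2^2\le \lambda\,g\cdot(\mu^{(k)}-P_\lambda)$, which is precisely the asserted estimate once $\lambda=\kappa^j$ is substituted back and $P_\lambda$ is written out.

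I do not expect a genuine obstacle here; it is an entirely standard fact about projections. The only points deserving a word of care are that $\Params$ is closed and convex --- which it is, being an axis-parallel box --- so that the projection is well defined, single-valued, and the variational inequality above applies, and that the base point $\mu^{(k)}$ belongs to $\Params$, which is guaranteed by the construction of the iterates together with $\Proj_\Params$ mapping into $\Params$. Note also that the argument uses only $\lambda>0$, so the hypotheses $\kappa\in(0,1)$ and $j\in\N$ enter merely to ensure $\kappa^j>0$; the inequality in fact holds for any nonnegative step length, and the specific form of $g$ as a reduced gradient is irrelevant to the proof.
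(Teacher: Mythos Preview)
Your argument is correct: the obtuse-angle characterization of $\Proj_\Params$ applied with $z=\mu^{(k)}-\kappa^j g$ and $w=\mu^{(k)}\in\Params$ immediately gives the claimed inequality after expanding the inner product. The paper itself does not supply a proof at all---it merely recalls the statement from \cite[Corollary~5.4.4]{Kel99}---so your proposal in fact goes slightly beyond the paper by writing out the standard one-line projection argument, while also noting the citation route the paper takes.
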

To proceed, we assume that the error indicator $q^{(k)}$ in the TR condition \eqref{TR_radius_condition} is uniformly continuous.
\begin{assumption}
\label{asmpt:unif_cont_q}
The function $q^{(k)}:\Params\to \mathbb{R}$ defined in \eqref{TR_radius_condition} is uniformly continuous in $\Params$ uniformly in $k$, i.e.
\[
\forall \varepsilon>0: \exists\eta = \eta(\varepsilon)>0: \forall\mu,\nu\in\Params \quad \|\mu-\nu\|_2<\eta \Rightarrow \left|q^{(k)}(\mu)-q^{(k)}(\nu)\right|<\varepsilon.
\]
\end{assumption}
Also this last assumption is needed for proving convergence of the method in an infinite-dimensional perspective. In the particular case of the RB model, since $\Params$ is compact, one can apply the Heine-Cantor theorem \cite{Rudin1976} to show that $q^{(k)}$ is uniformly continuous for each $k\in\mathbb{N}$. Then the independence from $k$ follows from the fact that the RB model approximation is exact (after a sufficient number of enrichments) and $q^{(k)}=q^{(k+1)}$ when the enrichment is not performed. Finally, the next result gives a lower and upper bound for the line-search of the AGC point. This is important, because it shows that at each iteration $k$ the ACG point can be computed in a finite number of line-search steps.
\begin{theorem}
\label{thm:linesearch-agc}
Let Assumptions~{\rm\ref{asmpt:parameter_separable}}-{\rm\ref{asmpt:rejection}} be satisfied and let $\mu^{(k)}(j):= \Proj_\Params(\mu^{(k)}-\kappa^j\nabla_\mu\Jhat^{(k)}_\red(\mu^{(k)}))$ for $j\in\mathbb{N}$. Then we have that $\mu^{(k)}(j)$ satisfies \eqref{Arm_and_TRcond} for all
\begin{equation}
\label{j_low_bound}
j \geq \log_\kappa\left(\min\left\{\frac{2(1-\kappa_{\mathsf{arm}})}{C_L},\frac{\eta((1-\beta_3)\tau_{\rm{\text{mac}}})}{C}\right\}\right)
\end{equation}
where $\kappa\in(0,1)$ is the backtracking constant introduced in \eqref{eq:General_Opt_Step_point} and $C_L$, $C$ and $\eta$ are introduced in Assumption~{\rm\ref{asmpt:Lip_cont}}, Corollary~{\rm\ref{cor:bound_grad}} and Assumption~{\rm\ref{asmpt:unif_cont_q}}, respectively. Furthermore, for the step-length of the AGC points, it holds
\begin{equation}
\label{j_up_bound}
j^{(k)}_c \leq \log_\kappa\left(\min\left\{\frac{2(1-\kappa_{\mathsf{arm}})\kappa}{C_L},\frac{\eta((1-\beta_3)\tau_{\rm{\text{mac}}})\kappa}{C}\right\}\right).
\end{equation}
\end{theorem}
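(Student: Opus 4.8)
The plan is to show that the two conditions making up \eqref{Arm_and_TRcond} at $\ell = 0$ — the Armijo decrease \eqref{Armijo} and the trust-region bound \eqref{TR_radius_condition} — are each satisfied by $\mu^{(k)}(j)=\Proj_\Params(\mu^{(k)}-\kappa^j\nabla_\mu\cJhatn^{(k)}(\mu^{(k)}))$ once $\kappa^j$ is small enough, keeping track of the explicit thresholds. Throughout, write $g:=\nabla_\mu\cJhatn^{(k)}(\mu^{(k)})$ and $s(j):=\mu^{(k)}(j)-\mu^{(k)}$. Two elementary facts are used repeatedly: since $\mu^{(k)}\in\Params$ and $\Proj_\Params$ is Lipschitz with constant one, $\|s(j)\|_2\le\kappa^j\|g\|_2\le\kappa^j C$ by Corollary~\ref{cor:bound_grad}; and, applying Lemma~\ref{lemma:kelley} with step length $\kappa^j$, $\|s(j)\|_2^2\le-\kappa^j\,g\cdot s(j)$, i.e.\ $g\cdot s(j)\le-\kappa^{-j}\|s(j)\|_2^2$.

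For \eqref{Armijo} I would invoke the descent lemma on the convex set $\Params$, which is available because $\nabla_\mu\cJhatn^{(k)}$ is uniformly Lipschitz with constant $C_L$ (Assumption~\ref{asmpt:Lip_cont}): $\cJhatn^{(k)}(\mu^{(k)}(j))\le\cJhatn^{(k)}(\mu^{(k)})+g\cdot s(j)+\tfrac{C_L}{2}\|s(j)\|_2^2$. Combining with $g\cdot s(j)\le-\kappa^{-j}\|s(j)\|_2^2$ gives $\cJhatn^{(k)}(\mu^{(k)}(j))-\cJhatn^{(k)}(\mu^{(k)})\le-(\kappa^{-j}-\tfrac{C_L}{2})\|s(j)\|_2^2$, and the right-hand side is $\le-\kappa_{\mathsf{arm}}\kappa^{-j}\|s(j)\|_2^2$ exactly when $(1-\kappa_{\mathsf{arm}})\kappa^{-j}\ge C_L/2$, i.e.\ when $\kappa^j\le\frac{2(1-\kappa_{\mathsf{arm}})}{C_L}$.

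For \eqref{TR_radius_condition} the starting point is $q^{(k)}(\mu^{(k)})\le\beta_3\delta^{(k)}$ from Lemma~\ref{lemma:BothRBErrorsSmaller2}, combined with the uniform continuity of $q^{(k)}$ (Assumption~\ref{asmpt:unif_cont_q}): choosing $\varepsilon:=(1-\beta_3)\tau_{\text{mac}}$, whenever $\|s(j)\|_2<\eta(\varepsilon)$ one gets $q^{(k)}(\mu^{(k)}(j))<\beta_3\delta^{(k)}+(1-\beta_3)\tau_{\text{mac}}\le\delta^{(k)}$, where the last step uses $\tau_{\text{mac}}<\delta^{(k)}$ from Assumption~\ref{asmpt:rejection}. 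By the first elementary fact this is guaranteed as soon as $\kappa^j C\le\eta((1-\beta_3)\tau_{\text{mac}})$, i.e.\ $\kappa^j\le\frac{\eta((1-\beta_3)\tau_{\text{mac}})}{C}$. Since $\kappa\in(0,1)$, the map $\log_\kappa$ is decreasing, so the relation $\kappa^j\le\min\{a,b\}$ is equivalent to $j\ge\log_\kappa(\min\{a,b\})=\max\{\log_\kappa a,\log_\kappa b\}$; applying this with $a=\frac{2(1-\kappa_{\mathsf{arm}})}{C_L}$ and $b=\frac{\eta((1-\beta_3)\tau_{\text{mac}})}{C}$ proves \eqref{j_low_bound}.

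For \eqref{j_up_bound}, recall that $j^{(k)}_c$ is by Definition~\ref{Def:AGC} the smallest non-negative integer for which \eqref{Arm_and_TRcond} holds at $\ell=0$. Since we have just shown that this holds for every integer $j$ with $\kappa^j\le\min\{a,b\}$, we obtain $j^{(k)}_c\le\log_\kappa(\min\{a,b\})+1=\log_\kappa(\kappa\min\{a,b\})=\log_\kappa(\min\{a\kappa,b\kappa\})$, which is exactly \eqref{j_up_bound}. I expect the whole argument to be routine modulo bookkeeping; the only points requiring care are keeping the inequality direction straight when dividing by $\log\kappa<0$ (so that $\min$ inside $\log_\kappa$ turns into $\max$ outside) and the integer-rounding that produces the extra factor $\kappa$ in \eqref{j_up_bound}, while the strict-versus-nonstrict subtlety in the uniform-continuity step is harmless because $\tau_{\text{mac}}<\delta^{(k)}$ is strict.
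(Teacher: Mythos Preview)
Your proposal is correct and follows essentially the same route as the paper: the paper derives the descent inequality via the integral mean-value representation and Lipschitz bound (your ``descent lemma''), then combines it with Lemma~\ref{lemma:kelley} exactly as you do to obtain the Armijo threshold, and handles \eqref{TR_radius_condition} via Lemma~\ref{lemma:BothRBErrorsSmaller2}, Assumption~\ref{asmpt:unif_cont_q} with $\varepsilon=(1-\beta_3)\tau_{\text{mac}}$, and Assumption~\ref{asmpt:rejection} just as you outline. Your explicit remarks on the monotonicity of $\log_\kappa$, the integer rounding producing the extra factor $\kappa$ in \eqref{j_up_bound}, and the strict-versus-nonstrict patch are welcome clarifications the paper leaves implicit.
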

\begin{proof}
We need to prove only \eqref{j_low_bound}, since \eqref{j_up_bound} is a direct consequence of it. Let $j$ satisfying \eqref{j_low_bound} be arbitrary and consider $y:= \mu^{(k)}-\mu^{(k)}(j)$, then it holds
\begin{equation}
\label{1step_AGC_thm}
\begin{aligned}
\Jhat_\red^{(k)}(\mu^{(k)})-&\Jhat_\red^{(k)}(\mu^{(k)}(j)) = -\int_0^1 \frac{\mathrm d}{\mathrm d s} \Jhat_\red^{(k)}(\mu^{(k)}-sy)\,\mathrm{d} s \\ & = \int_0^1  \nabla_\mu\Jhat^{(k)}_\red(\mu^{(k)}-sy)\cdot y \,\mathrm{d} s \\
& =  \nabla_\mu\Jhat(\mu^{(k)})\cdot y +\int_0^1\left(\nabla_\mu\Jhat^{(k)}_\red(\mu^{(k)}-sy)-\nabla_\mu\Jhat(\mu^{(k)})\right)\cdot y \,\mathrm{d} s
\end{aligned}
\end{equation}
Now, the integral term can be estimated exploiting the Lipschitz continuity of $\nabla_\mu \Jhat_\red^{(k)}$ (cf.~Assumption~{\rm\ref{asmpt:Lip_cont}}) as follows:
\begin{equation}
\label{int_estimate_Lip}
\begin{split}
\bigg| \int_0^1 \Big(\nabla_\mu\Jhat^{(k)}_\red(\mu^{(k)}-&sy)-\nabla_\mu\Jhat(\mu^{(k)})\Big)\cdot y \,\mathrm{d} s \bigg| \\&\leq C_L\int_0^1 \|sy\|_2\|y\|_2\,\mathrm{d}s = \frac{C_L}{2}\|\mu^{(k)}(j)-\mu^{(k)}\|^2_2
\end{split}
\end{equation}
Multiplying \eqref{1step_AGC_thm} by $\kappa^j$ and using \eqref{int_estimate_Lip} together with Lemma~\ref{lemma:kelley}, we obtain
\[
\begin{aligned}
\kappa^j\Big(\Jhat_\red^{(k)}(\mu^{(k)})&-\Jhat_\red^{(k)}(\mu^{(k)}(j))\Big) \\ & \geq \kappa^j\nabla_\mu\Jhat_\red^{(k)}(\mu^{(k)})\cdot(\mu^{(k)}-\mu^{(k)}(j))- \frac{C_L\kappa^j}{2}\|\mu^{(k)}(j)-\mu^{(k)}\|^2_2 \\
& \geq \left(1-\frac{C_L\kappa^j}{2}\right)\|\mu^{(k)}(j)-\mu^{(k)}\|^2_2.
\end{aligned}
\]
Thus, we have
\[
\Jhat_\red^{(k)}(\mu^{(k)}(j))-\Jhat_\red^{(k)}(\mu^{(k)})\leq -\frac{1}{\kappa^j}\left(1-\frac{C_L\kappa^j}{2}\right)\|\mu^{(k)}(j)-\mu^{(k)}\|^2_2.
\]
Since $j$ satisfies \eqref{j_low_bound}, we have that $\kappa^j\leq \frac{2(1-\kappa_{\mathsf{arm}})}{C_L}$. Therefore, the Armijo-type condition \eqref{Armijo} is satisfied. It remains to show that \eqref{TR_radius_condition} holds as well. Note that
\[
\|\mu^{(k)}(j)-\mu^{(k)}\|_2\leq \|\kappa^j\nabla_\mu\Jhat_\red^{(k)}(\mu^{(k)})\|_2\leq \eta((1-\beta_3)\tau_{\rm{\text{mac}}})
\]
by the choice of $j$ and Corollary~\ref{cor:bound_grad}. Now, Assumption~\ref{asmpt:rejection}, Lemma~\ref{lemma:BothRBErrorsSmaller2} and Assumption~\ref{asmpt:unif_cont_q} imply that 
\[
\begin{aligned}
q^{(k)}(\mu^{(k)}(j))&\leq|q^{(k)}(\mu^{(k)}(j))-q^{(k)}(\mu^{(k)})|+q^{(k)}(\mu^{(k)}) \\ & < (1-\beta_3)\tau_{\rm{\text{mac}}} +\beta_3\delta^{(k)}\leq \delta^{(k)},
\end{aligned}\] 
which completes the proof.
\end{proof}
In the next step we show that Algorithm~\ref{Alg:TR-RBmethod} ensures that the error-aware sufficient decrease condition \eqref{Suff_decrease_condition} is satisfied for every successful iteration.
\begin{lemma}
	 \label{lemma:EASDCFulfilled}
	Let the iterate $\mu^{(k+1)}$ be accepted by Algorithm~{\rm\ref{Alg:TR-RBmethod}}. Then the error-aware sufficient decrease condition \eqref{Suff_decrease_condition} is satisfied.
\end{lemma}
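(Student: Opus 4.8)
The plan is to split over the places in Algorithm~\ref{Alg:TR-RBmethod} at which $\mu^{(k+1)}$ can be accepted. There are essentially three: Step~\ref{AcceptingIterateSufficientCondition} (acceptance via the cheap sufficient condition of Step~\ref{Suff_condition_TRRB}), Step~\ref{AcceptingIterateNoUpdateByExactComputation} (acceptance via \textsf{Skip\_enrichment\_flag}$(k)$ together with $\varrho^{(k)}\geq\eta_\varrho$), and Step~\ref{AcceptingIterateAndUpdateByExactComputation} (acceptance via the exact check $\Jhat_h(\mu^{(k+1)})\leq\cJhatn^{(k)}(\mu^{(k)}_{\textnormal{AGC}})$); any branch that instead re-solves the sub-problem has rejected $\mu^{(k+1)}$, and the two termination branches triggered by $g_h(\mu^{(k+1)})\leq\tau_{\textnormal{FOC}}$ leave the model unchanged and are handled exactly as the second case below.

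Before the case distinction I would record two facts. \emph{(i) Monotonicity of the sub-problem solver.} The inner iteration producing $\mu^{(k+1)}$ is warm-started at $\mu^{(k,1)}=\mu^{(k)}_{\textnormal{AGC}}$ and each inner step sets $\mu^{(k,\ell+1)}=\mu^{(k,\ell)}(j^{(k,\ell)})$ with $j^{(k,\ell)}$ chosen so that the Armijo condition \eqref{Armijo} holds; as its right-hand side is non-positive, $\cJhatn^{(k)}$ is non-increasing along $\ell=1,\dots,L$, whence
\[
\cJhatn^{(k)}(\mu^{(k+1)}) = \cJhatn^{(k)}(\mu^{(k,L)}) \leq \cJhatn^{(k)}(\mu^{(k,1)}) = \cJhatn^{(k)}(\mu^{(k)}_{\textnormal{AGC}}).
\]
\emph{(ii) Exactness after an enrichment.} If the RB model is updated at $\mu^{(k+1)}$, then $u_{h,\mu^{(k+1)}}\in V_\red^{\pr,(k+1)}$ and $p_{h,\mu^{(k+1)}}\in V_\red^{\du,(k+1)}$, so the reduced primal and dual solutions at $\mu^{(k+1)}$ coincide with the FOM ones, $\Delta_\pr(\mu^{(k+1)})=0$, hence $\Delta_{\Jhat_\red^{(k+1)}}(\mu^{(k+1)})=0$ by Proposition~\ref{prop:error_reduced_quantities}\,(iii), and therefore $\cJhatn^{(k+1)}(\mu^{(k+1)})=\Jhat_h(\mu^{(k+1)})$; if the model is not updated, trivially $\cJhatn^{(k+1)}=\cJhatn^{(k)}$.

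With (i) and (ii) the three cases are immediate. If $\mu^{(k+1)}$ is accepted in Step~\ref{AcceptingIterateNoUpdateByExactComputation} the model is not updated, so \eqref{Suff_decrease_condition} is exactly (i). If it is accepted in Step~\ref{AcceptingIterateAndUpdateByExactComputation} the model is updated and the branching condition is $\Jhat_h(\mu^{(k+1)})\leq\cJhatn^{(k)}(\mu^{(k)}_{\textnormal{AGC}})$, so \eqref{Suff_decrease_condition} follows from (ii). If it is accepted in Step~\ref{AcceptingIterateSufficientCondition}, the condition of Step~\ref{Suff_condition_TRRB} gives $\cJhatn^{(k)}(\mu^{(k+1)})+\Delta_{\Jhat_\red^{(k)}}(\mu^{(k+1)})<\cJhatn^{(k)}(\mu^{(k)}_{\textnormal{AGC}})$; if the model is not updated this bounds $\cJhatn^{(k+1)}(\mu^{(k+1)})=\cJhatn^{(k)}(\mu^{(k+1)})$ directly, and if it is updated one uses $\cJhatn^{(k+1)}(\mu^{(k+1)})=\Jhat_h(\mu^{(k+1)})\leq\cJhatn^{(k)}(\mu^{(k+1)})+\Delta_{\Jhat_\red^{(k)}}(\mu^{(k+1)})$ from Proposition~\ref{prop:error_reduced_quantities}\,(iii) applied to the model at iteration $k$. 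In every case \eqref{Suff_decrease_condition} holds.

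I expect no genuine analytic obstacle here: the only quantitative input is Proposition~\ref{prop:error_reduced_quantities}\,(iii), everything else being the monotonicity (i) and the exactness (ii). The one point demanding care is purely organisational --- enumerating the acceptance branches of Algorithm~\ref{Alg:TR-RBmethod} without omission and, at each of them, keeping track of whether an enrichment has occurred, since this is precisely what decides whether $\cJhatn^{(k+1)}(\mu^{(k+1)})$ equals $\cJhatn^{(k)}(\mu^{(k+1)})$ or $\Jhat_h(\mu^{(k+1)})$.
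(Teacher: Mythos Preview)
Your proof is correct and follows essentially the same approach as the paper: a case split over the acceptance branches of Algorithm~\ref{Alg:TR-RBmethod}, using the descent property of the sub-problem solver from the AGC warm start together with the sufficient condition of Step~\ref{Suff_condition_TRRB} and the a~posteriori bound from Proposition~\ref{prop:error_reduced_quantities}\,(iii). The only difference is cosmetic: the paper dispatches all the ``model updated'' cases by citing \cite[Section~4.1]{QGVW2017}, whereas you make the argument self-contained by recording the exactness property~(ii) after enrichment and using it explicitly in Steps~\ref{AcceptingIterateSufficientCondition} and~\ref{AcceptingIterateAndUpdateByExactComputation}.
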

\begin{proof}
When the RB model is updated, we can proceed as in \cite[Section~4.1]{QGVW2017}. If the model is not enriched we have to distinguish two cases:
\begin{enumerate}
	\item $\mu^{(k+1)}$ is accepted in line~\ref{AcceptingIterateSufficientCondition}
	and \textsf{Skip\_enrichment\_flag} is true, we have
	\[
	\cJhatn^{(k+1)}(\mu^{(k+1)}) =\cJhatn^{(k)}(\mu^{(k+1)})\kern-0.1em \leq\kern-0.1em \cJhatn^{(k)}(\mu^{(k+1)}) + \Delta_{\cJhatn^{(k)}}(\mu^{(k+1)}) \kern-0.1em < \kern-0.1em \cJhatn^{(k)}(\mu_\text{\rm{AGC}}^{(k)}).
	\]
	\item $\mu^{(k+1)}$ is accepted in line~\ref{AcceptingIterateNoUpdateByExactComputation}: \\
	Note that it always holds $\cJhatn^{(k)}(\mu^{(k+1)}) \leq \cJhatn^{(k)}(\mu_\text{\rm{AGC}}^{(k)})$, since the truncated CG projected Newton method for solving the TR subproblem \eqref{TRRBsubprob} is a descent method and the AGC point $\mu_\text{\rm{AGC}}^{(k)}$ is used as a warm start. Since the RB model is not updated, this implies
	\[
	\cJhatn^{(k+1)}(\mu^{(k+1)}) = \cJhatn^{(k)}(\mu^{(k+1)}) \leq \cJhatn^{(k)}(\mu_\text{\rm{AGC}}^{(k)}),
	\]
	so that \eqref{Suff_decrease_condition} is satisfied.
\end{enumerate}
Thus, whenever $\mu^{(k+1)}$ is accepted, regardless updating the RB model or not, the error-aware sufficient decrease condition \eqref{Suff_decrease_condition} is satisfied.
\end{proof}
We are now able prove our improved (w.r.t.~\cite{KMOSV20}) convergence results also taking into consideration the possibility of skipping RB model updates.
\begin{theorem}
\label{Thm:convergence_of_TR}
	Let the hypotheses of Theorem~{\rm{\ref{thm:linesearch-agc}}} be satisfied.
	Then every accumulation point $\bar\mu$ of the sequence $\{\mu^{(k)}\}_{k\in\mathbb{N}}\subset \Params$ generated by Algorithm~{\rm\ref{Alg:TR-RBmethod}}
	is an approximate first-order critical point for $\Jhat_h$, i.e., it holds
	\begin{equation}
	\label{First-order_critical_condition}
	\|\bar \mu-\Proj_\Params(\bar \mu-\nabla_\mu \Jhat_h(\bar \mu))\|_2 = 0.
	\end{equation}
\end{theorem}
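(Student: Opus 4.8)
The plan is to follow the by now standard error-aware trust-region scheme (cf.~\cite{QGVW2017,KMOSV20,YM2013}), the only genuinely new point being that the argument must survive every branch of Algorithm~\ref{Alg:TR-RBmethod}, in particular the branches where the RB update is skipped. Before anything else, Assumption~\ref{asmpt:rejection} ensures that at each outer index only finitely many candidates are rejected, so that $\{\mu^{(k)}\}_{k\in\mathbb{N}}$ is a genuine infinite sequence of accepted iterates; all estimates below refer to it. The core of the proof is a $k$-uniform, summable lower bound for the model decrease per accepted step in terms of the reduced criticality measure $g_\red^{(k)}(\mu^{(k)})$.

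For the decrease at the approximated generalized Cauchy point, recall that by Definition~\ref{Def:AGC} one has $\mu_{\textnormal{AGC}}^{(k)}=\Proj_\Params\bigl(\mu^{(k)}-\kappa^{j_c^{(k)}}\nabla_\mu\cJhatn^{(k)}(\mu^{(k)})\bigr)$ with $\mu_{\textnormal{AGC}}^{(k)}$ fulfilling the Armijo inequality \eqref{Armijo} for $\ell=0$ and $j=j_c^{(k)}$, i.e.\ $\cJhatn^{(k)}(\mu^{(k)})-\cJhatn^{(k)}(\mu_{\textnormal{AGC}}^{(k)})\geq\kappa_{\mathsf{arm}}\kappa^{-j_c^{(k)}}\|\mu_{\textnormal{AGC}}^{(k)}-\mu^{(k)}\|_2^2$. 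I would then invoke Lemma~\ref{lemma:PropertyOfPParams} with $t=\kappa^{j_c^{(k)}}\in[0,1]$ and $d=\nabla_\mu\cJhatn^{(k)}(\mu^{(k)})$ to get $\|\mu_{\textnormal{AGC}}^{(k)}-\mu^{(k)}\|_2\geq\kappa^{j_c^{(k)}}g_\red^{(k)}(\mu^{(k)})$, and the uniform upper bound \eqref{j_up_bound} of Theorem~\ref{thm:linesearch-agc} to obtain a $k$-independent $\kappa_0\in(0,1]$ with $\kappa^{j_c^{(k)}}\geq\kappa_0$. Chaining the three inequalities yields $\cJhatn^{(k)}(\mu^{(k)})-\cJhatn^{(k)}(\mu_{\textnormal{AGC}}^{(k)})\geq\kappa_{\mathsf{arm}}\kappa_0\bigl(g_\red^{(k)}(\mu^{(k)})\bigr)^2$, and in particular $\cJhatn^{(k)}(\mu_{\textnormal{AGC}}^{(k)})\leq\cJhatn^{(k)}(\mu^{(k)})$.

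Next I would telescope. Since $\mu^{(k+1)}$ is accepted, Lemma~\ref{lemma:EASDCFulfilled} yields the error-aware sufficient decrease \eqref{Suff_decrease_condition}, and combining it with the previous estimate gives $\cJhatn^{(k+1)}(\mu^{(k+1)})\leq\cJhatn^{(k)}(\mu^{(k)})-\kappa_{\mathsf{arm}}\kappa_0\bigl(g_\red^{(k)}(\mu^{(k)})\bigr)^2$ for all $k$. Thus $k\mapsto\cJhatn^{(k)}(\mu^{(k)})$ is non-increasing, and it is bounded below by $0$: immediately after an RB update at $\mu^{(k)}$ one has $\cJhatn^{(k)}(\mu^{(k)})=\Jhat_h(\mu^{(k)})>0$ by Assumption~\ref{asmpt:bound_J}, whereas in the skip-enrichment branches $\cJhatn^{(k)}$ equals the (unchanged) model of the previous step and the trust-region admissibility of the accepted iterate ($q^{(k-1)}(\mu^{(k)})\leq\delta^{(k-1)}$, cf.~\eqref{TR_radius_condition}) together with the a posteriori identity $\bigl|\Jhat_h(\mu^{(k)})-\cJhatn^{(k-1)}(\mu^{(k)})\bigr|=\Delta_{\cJhatn^{(k-1)}}(\mu^{(k)})=q^{(k-1)}(\mu^{(k)})\,\cJhatn^{(k-1)}(\mu^{(k)})$ gives $\cJhatn^{(k)}(\mu^{(k)})\geq(1+\delta^{(k-1)})^{-1}\Jhat_h(\mu^{(k)})>0$. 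Hence the telescoping series converges and $g_\red^{(k)}(\mu^{(k)})\to0$. Lemma~\ref{lemma:BothRBErrorsSmaller2} then gives $g_h(\mu^{(k)})\leq(1+\tau_g)g_\red^{(k)}(\mu^{(k)})\to0$, and since $g_h$ is continuous on $\Params$ (continuity of $\nabla_\mu\Jhat_h$ by Assumption~\ref{asmpt:differentiability} and Lipschitz continuity of $\Proj_\Params$), every accumulation point $\bar\mu$ of $\{\mu^{(k)}\}$ satisfies $g_h(\bar\mu)=0$, which is \eqref{First-order_critical_condition}.

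The part I expect to be most delicate is the combination yielding the AGC decrease bound together with the lower bound on the model sequence: one has to merge the Armijo decrease, the projection inequality of Lemma~\ref{lemma:PropertyOfPParams}, and the uniform line-search exponent of Theorem~\ref{thm:linesearch-agc} into a single $k$-uniform estimate, while carefully threading the trust-region and a posteriori constraints through every branch of Algorithm~\ref{Alg:TR-RBmethod} -- especially those in which $\cJhatn^{(k)}$ need no longer coincide with $\Jhat_h$ at $\mu^{(k)}$ -- so that the model values stay bounded away from $-\infty$. Everything else is routine monotone-sequence and continuity bookkeeping.
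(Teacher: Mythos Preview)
Your proof is correct and follows essentially the same route as the paper: Armijo decrease at the AGC point, the projection inequality of Lemma~\ref{lemma:PropertyOfPParams}, the uniform step-length bound of Theorem~\ref{thm:linesearch-agc}, telescoping via Lemma~\ref{lemma:EASDCFulfilled}, and the passage from $g_\red^{(k)}$ to $g_h$ through Lemma~\ref{lemma:BothRBErrorsSmaller2}. The only cosmetic difference is that you combine Lemma~\ref{lemma:PropertyOfPParams} and the step-length bound \emph{before} telescoping, whereas the paper telescopes first and applies them afterwards; the resulting estimates are equivalent. One small slip: the a~posteriori relation you write as $\bigl|\Jhat_h(\mu^{(k)})-\cJhatn^{(k-1)}(\mu^{(k)})\bigr|=\Delta_{\cJhatn^{(k-1)}}(\mu^{(k)})$ is only an inequality ($\leq$, cf.~Proposition~\ref{prop:error_reduced_quantities}(iii)); your subsequent derivation still goes through with $\leq$. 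In fact your treatment of the lower bound for $\cJhatn^{(k)}(\mu^{(k)})$ in the skip-enrichment branches is more explicit than the paper's, which simply invokes Assumption~\ref{asmpt:bound_J} without further comment.
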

\begin{proof}
Let $k\in\mathbb{N}$ be arbitrary. From Definition~\ref{Def:AGC}, \eqref{Armijo} and  \eqref{Suff_decrease_condition} due to Lemma \ref{lemma:EASDCFulfilled}, we have
\[
\begin{aligned}
\Jhat_\red^{(k)}(\mu^{(k)})-\Jhat_\red^{(k+1)}(\mu^{(k+1)}) & \geq \Jhat_\red^{(k)}(\mu^{(k)})-\Jhat_\red^{(k)}(\mu^{(k)}_\text{AGC}) \\
& \geq \frac{\kappa_{\mathsf{arm}}}{\kappa^{j^{(k)}_c}}\|\mu^{(k)}-\mu^{(k)}_\text{AGC}\|^2_2 \\
& = \frac{\kappa_{\mathsf{arm}}}{\kappa^{j^{(k)}_c}}\|\mu^{(k)}-\Proj_\Params(\mu^{(k)}-\kappa^{j^{(k)}_c}\nabla_\mu\Jhat_\red^{(k)}(\mu^{(k)}))\|^2_2
\end{aligned}
\]
By summing both sides of the previous inequality from $k=0$ to $K$, we obtain
\[
\begin{aligned}
\Jhat_\red^{(0)}(\mu^{(0)})-\Jhat_\red^{(K+1)}(\mu^{(K+1)}) \geq \\ \sum_{k=0}^K \frac{\kappa_{\mathsf{arm}}}{\kappa^{j^{(k)}_c}}\|\mu^{(k)}-&\Proj_\Params(\mu^{(k)}-\kappa^{j^{(k)}_c}\nabla_\mu\Jhat_\red^{(k)}(\mu^{(k)}))\|^2_2 \geq 0.
\end{aligned}
\]
For $K\to +\infty$ the term on the left-hand side is bounded from above, due to Assumption~\ref{asmpt:bound_J}. Thus
\[
\lim_{k\to\infty} \frac{\kappa_{\mathsf{arm}}}{\kappa^{j^{(k)}_c}}\|\mu^{(k)}-\Proj_\Params(\mu^{(k)}-\kappa^{j^{(k)}_c}\nabla_\mu\Jhat_\red^{(k)}(\mu^{(k)}))\|^2_2 = 0.
\]
From Theorem~\ref{thm:linesearch-agc}, we have that $\kappa^{j^{(k)}_c}\geq \min\left\{\frac{2(1-\kappa_{\mathsf{arm}})}{C_L},\frac{\eta((1-\beta_3)\tau_{\rm{\text{mac}}})}{C}\right\}:= \widetilde{\kappa}$ for all $k\in\mathbb{N}$. Furthermore, we also have that $\kappa^{j^{(k)}_c}\leq 1$ for all $k\in\mathbb{N}$, because $\kappa\in(0,1)$. Hence,
\begin{align*}
\lim_{k\to\infty} \kappa_{\mathsf{arm}}\|\mu^{(k)}-\Proj_\Params(\mu^{(k)}-\widetilde{\kappa}\nabla_\mu\Jhat_\red^{(k)}(\mu^{(k)}))\|^2_2 & \leq \\ \lim_{k\to\infty} \frac{\kappa_{\mathsf{arm}}}{\kappa^{j^{(k)}_c}}\|\mu^{(k)}-\Proj_\Params(\mu^{(k)}-\kappa^{j^{(k)}_c}&\nabla_\mu\Jhat_\red^{(k)}(\mu^{(k)}))\|^2_2 = 0
\end{align*}
which clearly implies
\[
\lim_{k\to\infty} \|\mu^{(k)}-\Proj_\Params(\mu^{(k)}-\widetilde{\kappa}\nabla_\mu\Jhat_\red^{(k)}(\mu^{(k)}))\|_2 = 0.
\]
Lemma~\ref{lemma:PropertyOfPParams} shows that
\begin{align*}
\left\| \mu^{(k)} - \Proj_\Params \left( \mu^{(k)} - \widetilde{\kappa} \nabla_\mu \cJhatn^{(k)}(\mu^{(k)}) \right) \right\|_2 \geq \hspace{3em}\\ \widetilde{\kappa} \left\| \mu^{(k)} - \Proj_\Params \left( \mu^{(k)} - \nabla_\mu \cJhatn^{(k)}(\mu^{(k)}) \right) \right\|_2
\end{align*}
holds for all $k \in \N$. Thus, we can conclude
\begin{align}
\label{lim_k_to_zero}
& \lim_{k \to \infty} \left\| \mu^{(k)} - \Proj_\Params \left( \mu^{(k)} - \nabla_\mu \cJhatn^{(k)}(\mu^{(k)}) \right) \right\|_2 = 0.
\end{align} 
By Lemma~\ref{lemma:BothRBErrorsSmaller2}
 we have
\begin{align*}
& \left| \left\| \mu^{(k)} - \Proj_\Params \left( \mu^{(k)} - \nabla_\mu \cJhatn^{(k)}(\mu^{(k)}) \right) \right\|_2 - \left\| \mu^{(k)} - \Proj_\Params \left( \mu^{(k)} - \nabla_\mu \Jhat_h(\mu^{(k)}) \right) \right\|_2 \right| \\
& \qquad \leq \left\| \mu^{(k)} - \Proj_\Params \left( \mu^{(k)} - \nabla_\mu \cJhatn^{(k)}(\mu^{(k)}) \right) \right\|_2 \tau_g \to 0 \quad \text{as } k \to \infty,
\end{align*}
which, together with \eqref{lim_k_to_zero}, implies 
\begin{align}
\label{lim_k_to_zero_part2}
\lim_{k \to \infty}  \left\| \mu^{(k)} - \Proj_\Params \left( \mu^{(k)} - \nabla_\mu \Jhat_h(\mu^{(k)}) \right) \right\|_2 = 0.
\end{align}
Now let $\bar{\mu}$ be an accumulation point of the sequence $\{\mu^{(k)}\}_{k\in\mathbb{N}}\subset \Params$, i.e., it holds 
\[
\mu^{(k_i)} \to \bar{\mu} \quad \text{ as } i \to \infty
\]
for some subsequence $\left\{\mu^{(k_i)}\right\}_{i \in \mathbb{N}}$. Using \eqref{lim_k_to_zero_part2}, we have by the continuity of the gradient $\nabla_\mu \Jhat_h$ and of the projection operator $\Proj_\Params$
\begin{align*}
\left\| \bar\mu - \Proj_\Params \left( \bar\mu - \nabla_\mu \Jhat_h(\bar\mu) \right) \right\|_2 \kern-0.1em=\kern-0.1em 
\lim_{i \to \infty} \left\| \mu^{(k_i)} - \Proj_\Params \left( \mu^{(k_i)} - \nabla_\mu \Jhat_h(\mu^{(k_i)}) \right) \right\|_2 = 0,
\end{align*}
which concludes the proof.
\end{proof}
\begin{remark}
	\label{rmk:improved_conv_results}
(1) The biggest improvement in comparison to the convergence proof from {\rm\cite{KMOSV20}} consists in the fact that we proved that any accumulation point of the sequence $\left\{\mu^{(k)}\right\}$ is an actual critical point for $\Jhat_h$ and not an approximated one up to the tolerance $\tau_\text{\rm{sub}}$. \\
(2) Note that there is no direct use of the RB model properties in Theorem~{\rm\ref{Thm:convergence_of_TR}}, as it is in {\rm\cite{KMOSV20}}. This opens the possibility of considering different model functions, similarly to {\rm\cite{YM2013}} for unconstrained parameter sets, provided that the requested (and shown) properties hold. \\
(3) In contrast to {\rm\cite{KMOSV20}}, we make use of the error-aware sufficient decrease condition {\rm\eqref{Suff_decrease_condition}} in the proof of convergence and not only to guarantee that the accumulation point is not a local maximum of $\Jhat_h$. As also remarked in {\rm\cite{KMOSV20}}, $\bar\mu$ can still be a saddle point as well as a local minimum. In the numerical experiments, to verify that the computed point $\bar \mu$ is actually a local minimum, we check the second-order sufficient optimality conditions (cf.~Proposition~{\rm\ref{prop:second_order}}) as soon as the algorithm terminates. \\
(4) Algorithm~\ref{Alg:TR-RBmethod} is also an improvement with respect to {\rm\cite[Algorithm~1]{KMOSV20}} by allowing to skip updates of the RB model. This prevents the dimension of the RB space to grow excessively and, thus, helps to overcome the dimension of the FOM model. This feature is particularly relevant in applications which require many iterations, such as PDE-constrained multiobjective optimization by scalarization methods {\rm\cite{BBV2017,Ehrgott2005,IUV2017}}. In there many optimization problems have to be solved iteratively, so that it is crucial for an efficient algorithm to keep the dimension of the RB space reasonably small. 
\end{remark}

\subsection{Construction of RB spaces}
\label{sec:construct_RB} 
For the construction of the required RB spaces $V^{\pr}_{\red}$, $V^{\du}_{\red}$ the numerical experiments in  \cite{KMOSV20} have shown that \emph{Lagrangian RB spaces} are more favorable compared to \emph{aggregated RB spaces} (i.e.~non separated primal and dual spaces), since the use of the used NCD-corrected cost functional fully overcomes the approximation error that comes with separating the RB spaces.
Thus, using aggregated RB spaces unnecessarily increases the size of the RB spaces which is particularly problematic when the number of iterations of Algorithm~\ref{Alg:TR-RBmethod} is large.
For this reason, we only focus on enrichment approaches where the NCD-corrected functional is required.
We state the Lagrangian RB spaces and introduce an additional enrichment strategy which is of particular interest for a projected Newton method.
After solving a sub-problem of the TR-algorithm \ref{Alg:TR-RBmethod}, we (optionally) enrich with the primal and dual
solutions $u_{h, \mu^{(k)}}, p_{h, \mu^{(k)}} \in V_h$, where $\mu^{(k)}\in\Params$ is the current iterate.
In addition, we potentially also have access to their respective sensitivities w.r.t a direction $\eta$,
i.e.~$d_{\eta} u_{h, \mu}, d_{\eta} p_{h, \mu} \in V_h$. 
For the projected Newton method, we also require directional derivatives of the primal and dual solutions.
However, it can not be guaranteed that a reduced solution $d_{\eta} u_{\red, \mu}$ is a good approximation of $d_{\eta} u_{h, \mu}$; cf.~\cite{KMOSV20}.
On the other hand, computing snapshots $d_{\mu_i} u_{h, \mu}$ component-wise (e.g.~for full Taylor RB spaces) results in a prohibitively large computational effort for a high dimension of the parameter space. Instead, we suggest to only add snapshots of a wisely-chosen direction $\eta$.
As discussed earlier, it is cheap to compute the gradient $\nabla_\mu \Jhat_h(\mu)$ if $u_{h, \mu}, p_{h, \mu} \in V_h$ are already available.
On top of that, we know that we need the directional sensitivities of $u_{h, \mu}$ and $p_{h, \mu}$ in the direction $\eta=\nabla_\mu \cJhatn(\mu)$ in order to proceed with the next sub-problem of the TR-RB algorithm.
Thus, we propose the following two enrichment strategies.  
\begin{enumerate}[(a)]
		\item \emph{Lagrangian RB spaces}: \label{enrich:lag} Following \cite{KMOSV20},
		we simply add each FOM solution to the corresponding RB space, i.e.~for a
		given $\mu \in \Params$, we enrich by
		$
		V^{\pr,k}_{\red} = V^{\pr,k-1}_{\red} \cup \{u_{h,\mu}\}, 
		V^{\du,k}_{\red} = V^{\du,k-1}_{\red} \cup \{p_{h,\mu}\}.
		$
	   \item \emph{Directional Taylor RB space} \label{enrich:dir_tay}
	   We compute a direction $\eta:=\nabla_\mu \cJhatn(\mu)$ from $u_{h, \mu}$ and $p_{h, \mu}$ and include the directional derivatives to the respective RB space, i.e.
		$
		V^{\pr,k}_{\red} = V^{\pr,k-1}_{\red} \cup \{u_{h,\mu}\} \cup \{d_{\eta} u_{h,\mu}\}, 
		V^{\du,k}_{\red} = V^{\du,k-1}_{\red} \cup \{p_{h,\mu}\} \cup \{d_{\eta} p_{h,\mu}\}.
		$	   
	\end{enumerate}
	For the sake of brevity, in Section~\ref{sec:num_experiments}, we present only the results for strategy \ref{enrich:dir_tay}. At \cite{code_2} 
	one can find the results also for the strategy \ref{enrich:lag}.

\section{Numerical experiments}
\label{sec:num_experiments} 
In this section, we show the numerical performance of the improved Algorithm~\ref{Alg:TR-RBmethod} in comparison to the one described in \cite{KMOSV20}. We further study how the a posteriori error estimate for the optimal parameter approximation can be used as a post-processing tool. The simulations have been performed with a \texttt{python} implementation, using pyMOR \cite{milk2016pymor} for the \texttt{numpy/scipy}-based discretization and the RB part. The code for this section can be found on \cite{code_2}, where we also provide \texttt{jupyter}-notebooks\footnote{See \url{https://github.com/TiKeil/Proj-Newton-NCD-corrected-TR-RB-for-pde-opt}.} with the results (and explanations) of the numerical tests presented below. 

\subsection{Computational details}
\label{sec:computational_details}
We define the discrete space $V_h$ as a piecewise linear FE space on a triangular mesh $\tau_h$, with a fine enough grid to fulfill Assumption \ref{asmpt:truth}. From a MOR point of view, the choice of the inner product $(\cdot \,,\cdot)_h$ has a large impact on the coercivity and continuity constants used in the error estimates in Section \ref{sec:a_post_error_estimates}. For this reason we define the inner product to be spanned by the energy product with respect to a fixed reference parameter $\check{\mu}$, i.e.~$(\cdot, \cdot)_h := a_{\check{\mu}}(\cdot \,,\cdot)$. For this product, it is easy to compute a lower bound for the coercivity constant of the bilinear form $\underline{\bformd}$ for all parameters in $\Params$. In particular, we can make use of the min-theta approach (see  \cite[Proposition 2.35]{HAA2017}). The continuity constants $\cont{\bformd}$, $\cont{\partial_{\mu_i} \bformd}$, $\cont{\kformd}$, $\cont{\partial_{\mu_i} \kformd}$,
 $\cont{\partial_{\mu_i} \jformd}$ and $\cont{\partial_{\mu_i} \lformd}$ can be computed using a max-theta approach
 (see \cite{HAA2017}).

 Another important technical detail is the preassembly of all high dimensional parts of the model in Section \ref{sec:rom} and the corresponding error estimators in Section~\ref{sec:a_post_error_estimates}, which can be carried out as usual with RB methods (see, e.g., \cite{HAA2017,HRS2016,QMN2016}).
For all experiments, we use an initial TR radius of $\delta_0 = 0.1$, a TR shrinking factor $\beta_1 = 0.5$, an Armijo step-length $\kappa = 0.5$, a safeguard for the TR boundary of $\beta_2 = 0.95$, a tolerance for enlarging the TR radius of
$\eta_\varrho = 0.75$, a stopping tolerance for the TR sub-problems of $\tau_{\text{\rm{sub}}} = 10^{-8}$, a maximum number of TR iteration
$K=60$, a maximum number of sub-problem iteration $K_{\text{\rm{sub}}} = 400$, a maximum number of Armijo iteration of $50$, $\varepsilon=10^{-8}$ for the $\varepsilon$-active set and optional enrichment parameters $\tau_g= \tau_{\text{\rm{FOC}}}/\tau_{\text{\rm{sub}}}$, $\beta_3=0.5$ and $\tau_\text{\rm{grad}}=0.01$. We also point out that the stopping tolerance for the FOC condition $\tau_{\text{\rm{FOC}}}$ is specified in each experiment. 
We compare four Algorithms:
\newline\noindent \textbf{FOM TR-Newton-CG \cite{NW06}}: following \cite[Algorithm~7.2]{NW06}, this method considers a standard FOM quadratic approximation for $\Jhat_h$ as model function and includes the computation of the Cauchy point as well as a way to handle the box constraints of $\Params$, following \cite[Section~16.7]{NW06}.
\newline\noindent \textbf{BFGS NCD TR-RB (UE) \cite{KMOSV20}}: this is the NCD-corrected method with BFGS sub-problem solver and Lagrange RBs following \cite[Algorithm~1]{KMOSV20} with unconditional enrichment (UE) of the RB spaces, where no reduced hessian or sensitivities of the primal and dual solutions are required.
\newline\noindent \textbf{Newton NCD TR-RB (UE)}: this is the NCD-corrected method from \cite{KMOSV20} with directional Taylor RB spaces, a projected Newton method for the TR sub-problems and unconditional enrichment (UE) of the RB spaces.
\newline\noindent \textbf{Newton NCD TR-RB (OE) [Alg.~\ref{Alg:TR-RBmethod}]}: this is the NCD-corrected method with directional Taylor RB spaces, a projected Newton method for the TR sub-problems and optional enrichment (OE) of the RB spaces from Algorithm~\ref{Alg:TR-RBmethod}.

\subsection{Model problem: Quadratic objective functional with elliptic PDE constraints}
\label{sec:model_problem}
 For our experiments we set the objective functional to be a weighted $L^2$-misfit on a domain of interest $D \subseteq \Omega$ and a weighted Tikhonov term, i.e.
 \begin{align*}
 \mathcal{J}(v, \mu) = \frac{\sigma_d}{2} \int_{D}^{} (v - u^{\text{d}})^2 + \frac{1}{2} \sum^{M}_{i=1} \sigma_i (\mu_i-\mu^{\text{d}}_i)^2 + 1,
 \end{align*} 
 with a desired state $u^{\text{d}}$ and desired parameter $\mu^{\text{d}}$.
 With respect to the formulation in \eqref{P.argmin}, we have
 \begin{align*}
 \Theta(\mu) &= \frac{\sigma_d}{2} \sum^{M}_{i=1} \sigma_i (\mu_i-\mu^{\text{d}}_i)^2 + \frac{\sigma_d}{2} \int_{D}^{} u^{\text{d}} u^{\text{d}}\\
 j_{\mu}(u) &= -\sigma_d \int_{D}^{} u^{\text{d}}u, \qquad \text{and} \qquad k_{\mu}(u,u) = \frac{\sigma_d}{2} \int_{D}^{} u^2
 \end{align*}
 Note that the formulation of $\mathcal{J}(v, \mu)$ is a very general choice. It is applicable to design optimization, optimal control as well as to inverse problems.
We remark that the constant term $1$ is added to fulfill Assumption~\ref{asmpt:bound_J} and does not influence the position of the local minima. As PDE-constraint, we consider the weak formulation of the parameterized equation
 \begin{equation} \label{eq:prot_state}
 \begin{split}
 -  \nabla \cdot \left( \kappa_{\mu}  \nabla u_{\mu} \right) &= f_{\mu} \hS{58} \text{in } \Omega, \\
 c_\mu ( \kappa_{\mu}  \nabla u_{\mu} \cdot n) &= (u_{\text{out}} - u_{\mu}) \hS{20} \text{on } \partial \Omega.
 \end{split}
 \end{equation}
 with parametric diffusion coefficient $\kappa_{\mu}$ and source $f_{\mu}$, outside temperature $u_{\text{out}}$ and robin function $c_\mu$. Accordingly, we have
 \begin{align*}
 a_{\mu}(v,w) &= \int_{\Omega}^{} \kappa_\mu  \nabla v \cdot  \nabla w \,\mathrm{d}x + \int_{\partial \Omega}^{} \frac{1}{c_\mu} v w \,\mathrm{d}S \\
 l_\mu(v) &= \int_{\Omega}^{} f_\mu v \,\mathrm{d}x +  \int_{\partial \Omega}^{} \frac{1}{c_\mu} u_{\text{out}} v \,\mathrm{d}S.
 \end{align*}
 and, in addition, we set the parameter box constraints
$
 \mu_i \in [\mu_i^{\text{min}}, \mu_i^{\text{max}}].
$

A possible application is the stationary distribution of heat in a building. Inspired by the blueprint of a building with windows, heaters, doors and walls, we consider a parameterized diffusion problem which is displayed in Figure \ref{ex1:blueprint}. We picked a certain domain of interest $D$ and we enumerated all windows, walls, doors and heaters separately. 
 \begin{figure}
 	\begin{subfigure}[b]{0.60\textwidth}
 		\includegraphics[width=\textwidth]{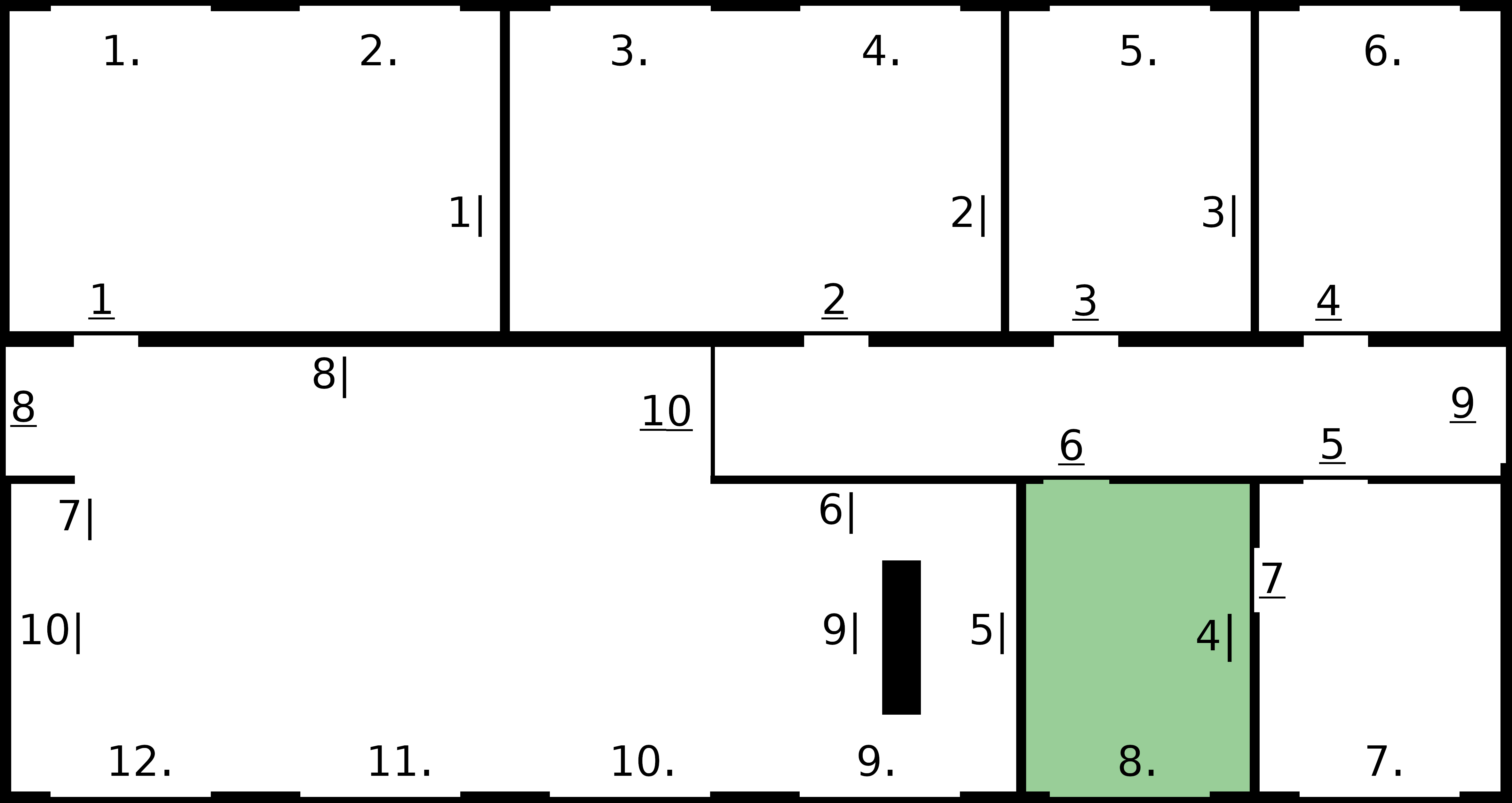}
 	\end{subfigure}
 	\centering
	\captionsetup{width=\textwidth}
 	\caption{\footnotesize{%
      Parameterization based on a blueprint of a building floor, see \cite{KMOSV20} for details. Numbers indicate potential parameters, where $1.$ is a window or a heater, $\underbar{1}$ are doors, and $1|$ are walls. The green region illustrates the domain of interest $D$.
}}
 	\label{ex1:blueprint}
 \end{figure}
 For simplicity we omit a realistic modeling of temperature and restrict ourselves to academic numbers of the diffusion and
 heat source quantities.
 We set the computational domain to $\Omega := [0,2] \times [0,1] \subset \mathbb{R}^2$ and we model boundary conditions by
 incorporating all walls and windows that touch the boundary of the blueprint to the robin function $c_\mu$. All other diffusion components enter the diffusion coefficient $\kappa_\mu$ whereas the heaters work as a source term on the right hand side $f_\mu$.
 Moreover, we assume an outside temperature of $u_{\text{out}}=5$.
 For our discretization we choose a mesh size $h= \sqrt{2}/200$ which resolves all features from the given picture and results
 in $\dim V_h = 80601$ degrees of freedom.\\
 We consider two scenarios.
 \begin{itemize}
   \item \emph{Experiment 1:} optimize 12 Parameters (3 walls, 2 doors, 7 heaters) to reach the target $u^\text{d}(x) = 18\rchi_{D}(x)$ and $\mu^d\equiv0$ using also the a posteriori error estimate for the optimal parameter (Proposition~\ref{apo-est-parameters}) as post-processing (cf. Section~\ref{num_test:12params}),
   \item \emph{Experiment 2:} optimize 28 Parameters (8 walls, 8 doors, 12 heaters) with target $u^\text{d}= \mathcal{S}_h(\mu^\text{d})$ where $\mu^\text{d}\in\Params$ is given
 	 (cf. Section~\ref{num_test:28params}). 
 \end{itemize}
Both experiments are computed with 10 different random samples for the starting parameter $\mu^{(0)}$.
For the sake of brevity, we omit the details on the data for the experiment and refer to \cite{code_2} on how to reproduce them.
\subsection{Experiment 1: A posteriori error estimate for optimal parameter}
\label{num_test:12params}
This experiment shows the usability of the a posteriori error estimate \eqref{apo-est-parameters} and shows the limitations of the projected BFGS method.
We focus, at first, on the behavior of the methods for a given starting parameter $\mu^{(0)}$.
In Fig.~\ref{Fig:EXC12:mu_error} the error at each iteration $k$ is reported for Algorithm~\ref{Alg:TR-RBmethod} and for the TR-RB method from \cite{KMOSV20}.
We omit the FOM TR-Newton-CG, due to its larger computational time.
We compute the solution with a tolerance $\tau_{\text{\rm{FOC}}}= 5\times 10^{-4}$ (Fig.~\ref{Fig:EXC12:mu_error}.A).
When the methods reach this tolerance, we evaluate the a posteriori error estimate and if this is greater
than the value $\tau_{\mu}= 10^{-4}$,
we decrease the tolerance $\tau_{\text{\rm{FOC}}}$ by two orders of magnitude and repeat the procedure
until the a posteriori estimate is below the desired tolerance $\tau_{\mu}$ (Fig.~\ref{Fig:EXC12:mu_error}.A).
In Fig.~\ref{Fig:EXC12:mu_error}.B, we do not use the a posteriori estimate and directly compute the solution with a tolerance $\tau_{\text{\rm{FOC}}}= 10^{-7}$.
\begin{figure}
  \footnotesize
\begin{tikzpicture}
\definecolor{color0}{rgb}{0.65,0,0.15}
\definecolor{color1}{rgb}{0.84,0.19,0.15}
\definecolor{color2}{rgb}{0.96,0.43,0.26}
\definecolor{color3}{rgb}{0.99,0.68,0.38}
\definecolor{color4}{rgb}{1,0.88,0.56}
\definecolor{color5}{rgb}{0.67,0.85,0.91}
\begin{axis}[
  name=left,
  anchor=west,
  scale=0.75,
log basis y={10},
tick align=outside,
tick pos=left,
x grid style={white!69.0196078431373!black},
xlabel={time in seconds [s]},
xmajorgrids,
xmin=0, xmax=265,
xtick style={color=black},
y grid style={white!69.0196078431373!black},
ymajorgrids,
ymin=2e-07, ymax=200,
ymode=log,
ylabel={\(\displaystyle \| \overline{\mu}_h-\mu^{(k)} \|_2\)},
yticklabels={,,},
ytick pos=right,
yticklabel pos=left,
ytick style={color=black}
]
\addplot [semithick, color0, mark=triangle*, mark size=3, mark options={solid, fill opacity=0.5}]
table {%
10.7283747196198 71.4005025909481
16.8894658088684 70.0913980253683
23.6503021717072 68.6311266604651
30.5194776058197 68.6184250989761
38.0436983108521 65.0536209016139
46.4489510059357 32.1153217100059
55.2568151950836 27.0201266807269
64.8881371021271 15.1115679924662
80.2292532920837 2.69350898011492
107.479714155197 0.0120659897452426
};
\addplot [semithick, color1, mark=*, mark size=3, mark options={solid, fill opacity=0.5}]
table {%
10.7799642086029 71.4005025909481
22.0994827747345 65.1324982951888
34.5064990520477 53.9934432453033
48.954788684845 31.6428672462588
63.0492160320282 0.22019511096089
};
\addplot [semithick, color2, mark=square*, mark size=3, mark options={solid, fill opacity=0.5}]
table {%
11.0857253074646 71.4005025909481
22.2940537929535 65.1324982951888
35.2455587387085 51.7977039852713
48.9639213085175 23.3668082033239
53.176020860672 0.226218016009454
};
\addplot [thick, dotted, color0, mark=triangle, mark size=3, mark options={solid, semithick}]
table {%
10.7593002319336 71.4005025909481
16.7505412101746 70.0913980253683
23.3089096546173 68.6311266604651
29.9739465713501 68.6184250989761
37.3876357078552 65.0536209016139
45.9340124130249 32.1153217100059
54.7701907157898 27.0201266807269
64.6972739696503 15.1115679924662
79.9411873817444 2.69350898011492
106.865009307861 0.0120659897452426
};
\addplot [thick, dashed, black, mark=square, mark size=0, mark options={solid, semithick}]
table {%
	106.865009307861 0.0120659897452426
	158.72991752624478 0.0120659897452426
};
\addplot [thick, dotted, color0, mark=triangle, mark size=3, mark options={solid, semithick}]
table {%
158.72991752624478 0.0120659897452426
174.107760667801 0.000461260770184682
184.492898464203 0.000456269534405666
195.489494085312 0.000453628915537476
207.455162286758 0.000451930571291389
220.113807439804 0.000438574164874584
234.120990991592 0.000437426210881451
251.963258266449 0.000211268220083034
266.428707361221 0.000211268166754213
};
\addplot [thick, dotted, color1, mark=o, mark size=3, mark options={solid, semithick}]
table {%
10.7847511768341 71.4005025909481
22.4626441001892 65.1324982951888
34.9769263267517 53.9934432453033
49.4036056995392 31.6428672462588
63.5665023326874 0.22019511096089
};
\addplot [thick, dashed,, black, mark=o, mark size=0, mark options={solid, semithick}]
table {%
	63.5665023326874 0.22019511096089
	117.5665023326874 0.22019511096089
};
\addplot [thick, dotted, color0, mark=o, mark size=3, mark options={solid, semithick}]
table {%
117.5665023326874 0.22019511096089
129.270685434341 0.00317499944918528
};
\addplot [thick, dashed,, black, mark=o, mark size=0, mark options={solid, semithick}]
table {%
129.270685434341 0.00317499944918528
178.270685434341 0.00317499944918528
};
\addplot [thick, dotted, color0, mark=o, mark size=3, mark options={solid, semithick}]
table {%
178.270685434341 0.00317499944918528
195.078726291656 2.1194544828339e-05
};
\addplot [thick, dashed,, black, mark=o, mark size=0, mark options={solid, semithick}]
table {%
	195.078726291656 2.1194544828339e-05
	246.078726291656 2.1194544828339e-05
};
\addplot [thick, dotted, color0, mark=o, mark size=3, mark options={solid, semithick}]
table {%
	246.078726291656 2.1194544828339e-05
};
\addplot [thick, dotted, color2, mark=square, mark size=3, mark options={solid, semithick}]
table {%
10.7282385826111 71.4005025909481
22.0372779369354 65.1324982951888
34.8145577907562 51.7977039852713
48.7089738845825 23.3668082033239
53.176020860672 0.226218016009454 
};
\addplot [thick, dashed,, black, mark=square, mark size=0, mark options={solid, semithick}]
table {%
	53.176020860672 0.226218016009454
	105.33605003356934 0.226218016009454
};
\addplot [thick, dotted, color2, mark=square, mark size=3, mark options={solid, semithick}]
table {%
	105.33605003356934 0.226218016009454 
	116.6974101066592 0.00138082875085338 
};
\addplot [thick, dashed,, black, mark=square, mark size=0, mark options={solid, semithick}]
table {%
	116.6974101066592 0.00138082875085338
	169.05726623535185 0.00138082875085338
};
\addplot [thick, dotted, color2, mark=square, mark size=3, mark options={solid, semithick}]
table {%
	169.05726623535185 0.00138082875085338 
	179.08393168449405 8.66810548955347e-07 
};
\addplot [thick, dashed, black, mark=square, mark size=0, mark options={solid, semithick}]
table {%
	179.08393168449405 8.66810548955347e-07
	231.2080514431 8.66810548955347e-07
};
\addplot [thick, dotted, color2, mark=square, mark size=3, mark options={solid, semithick}]
table {%
	231.2080514431 8.66810548955347e-07
};
\end{axis}
\begin{axis}[
  name=right,
  anchor=west,
  at=(left.east),
  xshift=1.3cm,
  scale=0.75,
legend cell align={left},
legend style={nodes={scale=0.7}, fill opacity=0.8, draw opacity=1, text opacity=1, at=(left.north east), anchor=north east, xshift=-5.21cm, yshift=-0.94cm, draw=white!80!black},
log basis y={10},
tick align=outside,
tick pos=left,
x grid style={white!69.0196078431373!black},
xlabel={time in seconds [s]},
xmajorgrids,
xmin=0, xmax=218,
xtick style={color=black},
y grid style={white!69.0196078431373!black},
ymajorgrids,
ymin=2e-07, ymax=200,
ymode=log,
ytick pos=left,
ytick style={color=black}
]
\addplot [semithick, color0, mark=triangle*, mark size=3, mark options={solid, fill opacity=0.5}]
table {%
10.9332320690155 71.4005025909481
17.0462052822113 70.0913980253683
23.8179948329926 68.6311266604651
30.5724768638611 68.6184250989761
38.0406160354614 65.0536209016139
46.4146301746368 32.1153217100059
55.305762052536 27.0201266807269
65.3911876678467 15.1115679924662
81.0280475616455 2.69350898011492
108.373761177063 0.0120659897452426
126.480749845505 0.000461260770184682
136.940483808517 0.000456269534405666
147.813118219376 0.000453628915537476
160.001117944717 0.000451930571291389
172.585354804993 0.000438574164874584
186.605408668518 0.000437426210881451
204.703733205795 0.000211268220083034
219.356258392334 0.000211268166754213
};
\addlegendentry{BFGS NCD TR-RB (UE) \cite{KMOSV20}}
\addplot [semithick, color1, mark=*, mark size=3, mark options={solid, fill opacity=0.5}]
table {%
10.9879050254822 71.4005025909481
22.7747066020966 65.1324982951888
35.4346406459808 53.9934432453033
49.5354845523834 31.6428672462588
63.1189568042755 0.22019511096089
77.7369198799133 0.00317499944918528
93.7262444496155 2.1194544828339e-05
};
\addlegendentry{Newton NCD TR-RB (UE)}
\addplot [semithick, color2, mark=square*, mark size=3, mark options={solid, fill opacity=0.5}]
table {%
11.1055860519409 71.4005025909481
22.735223531723 65.1324982951888
35.5743370056152 51.7977039852713
49.1154987812042 23.3668082033239
62.9940128326416 0.226218016009454
77.7316734790802 0.00138082875085338
81.8664269447327 8.66810548955347e-07
};
\addlegendentry{Newton NCD TR-RB (OE) [Alg.~\ref{Alg:TR-RBmethod}]}
\end{axis}
\node[anchor=south, yshift=4pt] at (left.north) {(A) $\tau_{\text{\rm{FOC}}}= 5\times 10^{-4}$ + parameter control};
\node[anchor=south, yshift=4pt] at (right.north) {(B) $\tau_{\text{\rm{FOC}}}= 1\times 10^{-7}$};
\end{tikzpicture}
\captionsetup{width=\textwidth}
	\caption{\footnotesize{%
      Error decay and performance of selected algorithms defined in Section \ref{sec:computational_details} for experiment 1 from Section \ref{sec:model_problem} with unconditional enrichment (UE) vs.~optional enrichment (OE) for a single optimization run with random initial guess $\mu^{(0)}$ for two choices of $\tau_\text{FOC}$ (solid lines) with optional intermediate parameter control according to
      \eqref{apo-est-parameters} (dotted lines): for each algorithm each marker corresponds to one (outer) iteration of the optimization method and indicates the absolute error in the current parameter, measured against the computed FOM optimum. The dashed black horizontal lines indicate the time taken for the post-processing of the parameter control.
}}
	\label{Fig:EXC12:mu_error}
\end{figure}
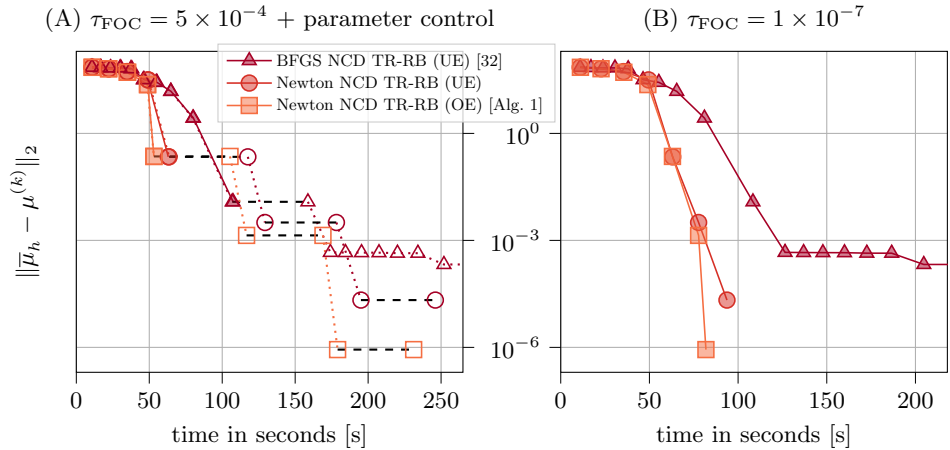
This test shows that the possibility of skipping enrichments improve the TR-RB algorithm. We point out that in this particular figure, the cost of computing the a posteriori error estimate \eqref{apo-est-parameters} is included in the computational time (as dashed horizontal line), which also includes
the costly computation of the smallest eigenvalue of the FOM hessian affecting the real performances of the method.
In fact, when directly considering a smaller $\tau_{\text{\rm{FOC}}}$, Algorithm~\ref{Alg:TR-RBmethod} is the fastest as visible from Fig.~\ref{Fig:EXC12:mu_error}.B.
This demonstrates how the a posteriori estimate is important as post-processing tool to verify the correct choice of the tolerances.
Another important issue that emerges from this numerical test is how a BFGS-based method (as the one in \cite{KMOSV20}) struggles to reach smaller values of the FOC condition, resulting in a high increase of the computational time and stagnating error, as can be seen in Figures~\ref{Fig:EXC12:mu_error}.A and \ref{Fig:EXC12:mu_error}.B. 
\begin{table}
  \footnotesize
	\centering
  \begin{tabular}{l|cc|c|cc}
    & runtime[s]
    & & iterations $k$\\
    & avg.~(min/max)
    & speed-up
    & avg.~(min/max)
    & rel.~error
    & FOC cond.\\\hline
    FOM   & 1381 (1190/1875) &  -- & 16.8 (14/23) & 2.66e-7 & 9.66e-8 \\
    BFGS (UE) & 818 (722/895) & 1.7 & 60 (60/60) & 2.77e-6 & 4.58e-7 \\
    Newton (UE) & 133 (94/212) & 10.4 & 7.8 (6/10) & 1.70e-7 & 1.73e-8 \\
    Newton (OE)  & 102 (82/141) & 13.6 &  6.9 (6/8) & 1.19e-7 & 1.30e-8
  \end{tabular}
  \captionsetup{width=\textwidth}
\caption{\footnotesize{%
    Performance and accuracy of the algorithms defined in Section \ref{sec:computational_details} (abbreviated in order of definition) for experiment 1 from Section \ref{sec:model_problem} with unconditional enrichment (UE) vs.~optional enrichment (OE) for ten optimization runs with random initial guess $\mu^{(0)}$ and $\tau_{\text{\rm{FOC}}}=10^{-7}$: averaged, minimum and maximum total computational time (column 2) and speed-up compared to the FOM variant (column 3); average, minimum and maximum number of iterations $k$ required until convergence (column 4), average relative error in the parameter (column 5) and average FOC condition (column 6).
    \label{Tab:EXC12}
}}
\end{table}
In Tab.~\ref{Tab:EXC12} we report the average computational time and iterations for ten random starting
parameters $\mu^{(0)}$ together with the relative error in reconstructing the local minimizer and the FOC
condition at which the method stops.
Also here, one can see how the possibility of skipping enrichments and the choice of a projected Newton method improve the results obtained with the algorithm from \cite{KMOSV20}.
In particular, the projected BFGS method struggles to reach the given $\tau_{\text{\rm{FOC}}}$ in all experiments, showing its limitation.
We remark that for larger tolerances the method from \cite{KMOSV20} is still valid and might converge faster, depending on the given example.
\subsection{Experiment 2: Large parameter set}
\label{num_test:28params}
In this experiment we apply the TR-RB algorithm to a 28 dimensional parameter set. This large number of parameters is prohibitive for the standard RB implementation based on a greedy algorithm for the offline phase.
This problem can be easily overcome by our adaptive algorithm, as also remarked in \cite{KMOSV20,QGVW2017}.
What might still be problematic is the increase of the number of iterations, which would lead to large RB spaces with the unconditional enrichment from \cite{KMOSV20}.
The purpose of this experiment is to demonstrate that skipping enrichment yields similar convergence behavior (in terms of iterations), but at a lower cost. 
\begin{figure}
  \footnotesize
\begin{tikzpicture}
\definecolor{color0}{rgb}{0.65,0,0.15}
\definecolor{color1}{rgb}{0.84,0.19,0.15}
\definecolor{color2}{rgb}{0.96,0.43,0.26}
\definecolor{color3}{rgb}{0.99,0.68,0.38}
\definecolor{color4}{rgb}{1,0.88,0.56}
\definecolor{color5}{rgb}{0.67,0.85,0.91}
\begin{axis}[
  name=left,
  anchor=west,
  width=6.5cm,
  height=4.5cm,
  log basis y={10},
  tick align=outside,
  tick pos=left,
  x grid style={white!69.0196078431373!black},
  xlabel={time in seconds [s]},
  xmajorgrids,
  xtick style={color=black},
  y grid style={white!69.0196078431373!black},
  ymajorgrids,
  ymode=log,
  ylabel={\(\displaystyle \| \overline{\mu}-\mu^{(k)} \|_2\)},
  ytick style={color=black}
]
\addplot [semithick, color0, mark=triangle*, mark size=3, mark options={solid, fill opacity=0.5}]
table {%
11.5934023857117 135.692657995198
19.2377111911774 90.8446588955084
27.8750627040863 24.6442190251569
37.0965969562531 9.75267373981067
47.1908657550812 4.48305564438341
58.3313579559326 2.84545730545265
70.7895290851593 1.68732573675647
83.6306612491608 1.45223566118595
97.9136147499084 0.631367397506257
113.420466423035 0.343342059768175
128.735586166382 0.307260609200393
144.771761417389 0.0897779774035193
167.995193243027 8.72687899118555e-05
190.073382377625 1.59054173003722e-06
210.727612018585 7.22008805445801e-07
};
\addplot [semithick, color1, mark=*, mark size=3, mark options={solid, fill opacity=0.5}]
table {%
11.9374821186066 135.692657995198
27.3678214550018 90.8498097955523
44.4709420204163 27.8277653035942
64.5053310394287 7.95877703598574
85.2987916469574 3.25287711111348
111.669808626175 2.16159970164513
139.39608001709 1.36285213889077
178.082042694092 9.39198337447456e-05
210.767727851868 2.56065163956946e-06
};
\addplot [semithick, color2, mark=square*, mark size=3, mark options={solid, fill opacity=0.5}]
table {%
11.6395268440247 135.692657995198
27.2452867031097 90.8498097955523
33.3784108161926 21.2028480910588
49.9773924350739 13.8959573913167
70.7744832038879 2.51586455562634
94.8881878852844 1.59200009037262
124.319049119949 0.00336062732991893
156.706687688828 0.000906237762678042
169.492668151855 2.72660621684241e-06
};
\end{axis}
\begin{axis}[
  name=right,
  anchor=west,
  at=(left.east),
  xshift=0.1cm,
  width=6.5cm,
  height=4.5cm,
  legend cell align={left},
  legend style={nodes={scale=0.7}, fill opacity=0.8, draw opacity=1, text opacity=1, at=(left.north east), anchor=north east, xshift=-2.0cm, yshift=1.0cm, draw=white!80!black},
  tick align=outside,
  tick pos=left,
  x grid style={white!69.0196078431373!black},
  xlabel={outer TR iteration $k$},
  ylabel={sub-problem iterations $L$},
  xmajorgrids,
  xtick style={color=black},
  y grid style={white!69.0196078431373!black},
  ymajorgrids,
  ytick pos=right,
]
\addplot [semithick, color0, mark=triangle*, mark size=3, mark options={solid, fill opacity=0.5}]
table {%
0 4
1 4
2 4
3 3
4 3
5 4
6 3
7 6
8 6
9 2
10 5
11 32
12 21
13 7
};
\addlegendentry{BFGS NCD TR-RB (UE) \cite{KMOSV20}}
\addplot [semithick, color1, mark=*, mark size=3, mark options={solid, fill opacity=0.5}]
table {%
0 4
1 4
2 4
3 3
4 5
5 4
6 11
7 8
};
\addlegendentry{Newton NCD TR-RB (UE)}
\addplot [semithick, color2, mark=square*, mark size=3, mark options={solid, fill opacity=0.5}]
table {%
0 4
1 3
2 2
3 4
4 5
5 11
6 11
};
\addlegendentry{Newton NCD TR-RB (OE) [Alg.~\ref{Alg:TR-RBmethod}]}
\end{axis}
\end{tikzpicture}
\captionsetup{width=\textwidth}
  \caption{\footnotesize{%
  Error decay w.r.t. the desired parameter $\bar\mu= \mu^\text{d}$ and performance (left) and number of sub-problem iterations in each TR iteration (right) of selected algorithms defined in Section \ref{sec:computational_details} for experiment 2 from Section \ref{sec:model_problem} with unconditional enrichment (UE) vs.~optional enrichment (OE) for a single optimization run with random initial guess $\mu^{(0)}$ for $\tau_\text{FOC} = 1\cdot10^{-5}$.}}
	\label{Fig:EXC28}
\end{figure}
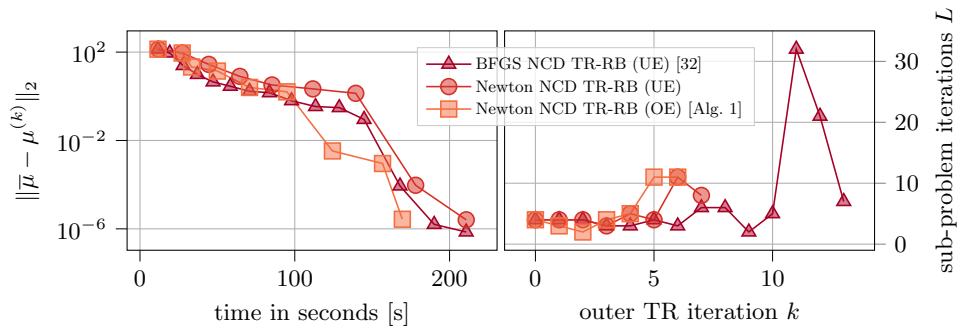
\begin{table}
  \footnotesize
	\centering
  \begin{tabular}{l|cc|c|cc}
    & runtime[s]
    & & iterations $k$\\
    & avg.~(min/max)
    & speed-up
    & avg.~(min/max)
    & rel.~error
    & FOC cond.\\\hline
    FOM   & 2423 (1962/3006) & -- & 18.5 (16/23) & 5.11e-9 & 4.57e-6 \\
    BFGS (UE) & 197 (156/272) & 12.3 & 12.1 (10/14) & 3.65e-9 & 2.38e-6 \\
    Newton (UE) & 258 (202/387) & 9.4 & 8.1 (7/9) & 5.83e-9 & 1.93e-6 \\
    Newton (OE)  & 168 (145/191) & 14.4 &  8.4 (7/12) & 1.22e-8 & 3.36e-6
  \end{tabular}
  \captionsetup{width=\textwidth}
  \caption{\footnotesize{%
      Performance and accuracy of the algorithms defined in Section \ref{sec:computational_details} for experiment 2 from Section \ref{sec:model_problem} for ten optimization runs with random initial guess $\mu^{(0)}$ and $\tau_{\text{\rm{FOC}}}=10^{-5}$, compare Table \ref{Tab:EXC12}.
  \label{Tab:EXC28}}}
\end{table}
Tab.~\ref{Tab:EXC28} reports the average runtime and iterations for the tested TR methods together with the relative error in reconstructing $\mu^\text{d}$ and at which FOC condition the method stops.
One can note that all the adaptive TR-RB algorithm are a valid tool for speeding up the computational time w.r.t.~the FOM TR-New.-CG.
Among all, the best performances are achieved by Algorithm~\ref{Alg:TR-RBmethod}.
The effect of skipping an enrichment can be seen comparing Algorithm~\ref{Alg:TR-RBmethod} with the method from \cite{KMOSV20} with the projected Newton method as sub-problem solver.
The numbers of outer iterations are the same, while the computational time is decreased.
This is due to two reasons: skipping an enrichment implies no preparation of the new RB space (like preassembling the new a posteriori estimate $\Delta_{\Jhat}$) and faster computations having a smaller RB space.
In Fig.~\ref{Fig:EXC28} (left), one can see the error between the desired parameter $\mu^\text{d}$ and each iteration of the different adaptive TR-RB methods for the same random starting parameter $\mu^{(0)}$, which confirms what is mentioned above.
Fig.~\ref{Fig:EXC28} (right) shows instead the number of iterations needed to solve each TR sub-problem at the outer iteration $k$ of the method.
One can deduce that the advantages of Algorithm~\ref{Alg:TR-RBmethod} with respect to \cite{KMOSV20} based on projected Newton are not due to different inner iterations number,
but have to be associated to the reduction of the dimension of the RB space.
From Fig.~\ref{Fig:EXC28} (right) we also see that the BFGS method might loose its super-linear convergence according to the approximation of the hessian carried out by the method, which might deteriorate for an increasing number of active components of the parameter $\mu^{(k)}$; see \cite{Kel99}.

\section{Conclusion}
In this work we proposed a new variant of adaptive TR-RB method for PDE-constrained parameter optimization. As major improvement we included the possibility of skipping basis updates according to rigorous criteria, which ensures the convergence of the method while preventing the RB space from growing indefinitely. We further made use of a projected Newton method for solving the TR sub-problem and of a post-processing operation based on an a posteriori error estimate for the optimal parameter. These features made the algorithm robust (in terms of convergence) and comparable with other methods presented in the literature, which performed slower than Algorithm~\ref{Alg:TR-RBmethod} in our numerical experiments. In future works, we are interested in adopting (spatially) localized RB methods to only locally enriching the RB model, allowing for faster computations and even smaller local dimensions of the ROM.

{\small
	\bibliographystyle{abbrv}

}

\section*{A Appendix}

\begin{propositionproof}[Upper bound on the model reduction error of the hessian of the reduced output]
	For the hessian $\HHhat_{h,\mu}(\mu)$ of $\Jhat_h(\mu)$ and the true hessian 
	$\cHhatn(\mu)$ of the NCD-corrected functional from Proposition {\rm\ref{prop:true_corrected_reduced_hessian}},
	we have the following a posteriori error bound
	\begin{align*}
	\big|\HHhat_{h,\mu}(\mu) - \cHhatn(\mu)&\big| \leq \Delta_{{\HH}}(\mu) := \Big\| \big(\Delta_{\HH_{i,l}}(\mu)\big)_{i,l} \Big\|_2
	\end{align*}
	with
	\begin{align*}
	\Delta_{{\HH}_{i,l}}&(\mu)\\ :=& \,\, \Delta_\pr(\mu)\Big(
	\cont{\partial_{\mu_i}\partial_{\mu_l} \jformd}
	+ 2\cont{\partial_{\mu_i}\partial_{\mu_l} \kformd} \|u_{\red, \mu}\|
	+\cont{\partial_{\mu_i}\partial_{\mu_l} \bformd}\|p_{\red, \mu}\|
	\\
	&\quad \quad \quad \quad + 2\cont{\partial_{\mu_i} \kformd}\|\dred{\mu_l}u_{\red, \mu}\|
	+ \cont{\partial_{\mu_i} \bformd}\|\dred{\mu_l}p_{\red, \mu}\|
	\Big)
	\\
	&+\Delta_{d_{\mu_l}\pr}(\mu)\Big(
	\cont{\partial_{\mu_i} \jformd} + 2\cont{\partial_{\mu_i} \kformd}\|u_{\red, \mu}\| +\cont{\partial_{\mu_i} \bformd} \|p_{\red, \mu}\|
	\Big)
	\\
	&+\Delta_\du(\mu)\Big(
	\cont{\partial_{\mu_i}\partial_{\mu_l} \lformd}
	+\cont{\partial_{\mu_i}\partial_{\mu_l} \bformd} \|u_{\red, \mu}\|
	+\cont{\partial_{\mu_i} \bformd} \|d_{\mu_l}u_{\red, \mu}\|
	\Big)
	\\
	&+\Delta_{d_{\mu_l}\du}(\mu)\Big(
	\cont{\partial_{\mu_i} \lformd} +\cont{\partial_{\mu_i} \bformd} \|u_{\red, \mu}\|
	\Big) +(\Delta_\pr)^2(\mu)\Big(
	\cont{\partial_{\mu_i}\partial_{\mu_l} \kformd}
	\Big)
	\\
	&+\Delta_\pr(\mu)\Delta_\du(\mu)\Big(
	\cont{\partial_{\mu_i}\partial_{\mu_l} \bformd}
	\Big)+\Delta_\pr(\mu)\Delta_{d_{\mu_l}\pr}(\mu)\Big(
	2\cont{\partial_{\mu_i} \kformd}
	\Big)
	\\
	&+\Delta_\pr(\mu)\Delta_{d_{\mu_l}\du}(\mu)\Big(
	\cont{\partial_{\mu_i} \bformd}
	\Big)+\Delta_{d_{\mu_l}\pr}(\mu)\Delta_\du(\mu)\Big(
	\cont{\partial_{\mu_i} \bformd}
	\Big) \\
	&+ \cont{\partial_{\mu_i}\bformd} \|d_{\mu_l} u_{\red,\mu}\|~\|w_{\red,\mu}\| 
	+ \cont{\partial_{\mu_i}\lformd} \|d_{\mu_l} w_{\red,\mu}\| +  
	\cont{\partial_{\mu_i}\bformd}  \|u_{\red,\mu}\|~\|d_{\mu_l} w_{\red,\mu}\| \\
	&+ 2 \cont{\partial_{\mu_i}\kformd} \|z_{\red,\mu}\|~\|d_{\mu_l} u_{\red,\mu}\| 
	+ \cont{\partial_{\mu_i}a_\mu} \|z_{\red,\mu}\|~\|d_{\mu_l} p_{\red,\mu}\| 
	+ \cont{\partial_{\mu_i}j_\mu} \|d_{\mu_l} z_{\red,\mu}\| \\
	&+ 2\cont{\partial_{\mu_i}k_\mu} \|d_{\mu_l} z_{\red,\mu}\|~\|u_{\red,\mu}\| 
	+ \cont{\partial_{\mu_i}a_\mu} \|d_{\mu_l} z_{\red,\mu}\|~\|p_{\red,\mu}\| \\
	&+ \cont{\partial_{\mu_i}\partial_{\mu_l}\lformd} \|w_{\red,\mu}\| + \cont{\partial_{\mu_i}\partial_{\mu_l}\bformd} \|w_{\red,\mu}\|~\|u_{\red,\mu}\| \\
	&+ \cont{\partial_{\mu_i}\partial_{\mu_l}\jformd}\|z_{\red,\mu}\| 
	+2\cont{\partial_{\mu_i}\partial_{\mu_l}\kformd} \|z_{\red,\mu}\|~\|u_{\red,\mu}\|
	+\cont{\partial_{\mu_i}\partial_{\mu_l}\bformd} \|z_{\red,\mu}\|~\|p_{\red,\mu}\|
	\end{align*}
	where $\|\cdot\|_2$ denotes the spectral norm for matrices. The norms of the auxiliary functions $\|w_{\red,\mu}\|$, $\|z_{\red,\mu}\|$ and the norms of their sensitivities $\|d_{\mu_l}w_{\red,\mu}\|$ and 
	$\|d_{\mu_l}z_{\red,\mu}\|$ can be estimated by
	\begin{enumerate}[(i)]
		\item $\|z_{\red, \mu}\| \leq \underline{\bformd}^{-1} \|r_\mu^\pr(u_{\red,\mu})\|$,
		\item $\|w_{\red, \mu}\| \leq \underline{\bformd}^{-1} \left( \|r_\mu^\du(u_{\red,\mu},p_{\red,\mu})\| 
		+ 2\cont{k_\mu} \|z_{\red, \mu}\| \right)$,
		\item 	$\|d_{\mu_l}z_{\red,\mu}\| \leq \underline{\bformd}^{-1} \left(\|\resd^{\pr,d_{\mu_i}}(u_{\red, \mu}, d_{\mu_i}u_{\red, \mu})\| + \cont{\partial_{\mu_l} \bformd}\|z_{\red,\mu}\| \right) $,
		\item 	$\|d_{\mu_l}w_{\red,\mu}\|  \leq 
		\underline{\bformd}^{-1}\Big(\|\resd^{\du,d_{\mu_i}}(u_{\red, \mu}, p_{\red, \mu}, d_{\mu_i}u_{\red, \mu}, d_{\mu_i}p_{\red, \mu}) \| 
		+ 2\cont{k_\mu}\|d_{\mu_l}z_{\red,\mu}\| \\
		+ 2\cont{\partial_\mu k_\mu}\|z_{\red,\mu}\| + \cont{\partial_\mu \bformd}\|w_{\red,\mu}\| \Big)$.
	\end{enumerate}
\end{propositionproof}
\begin{proof} 
	To prove the hessian estimate, we recall that for all $i,l$ we have
	\[
	\begin{aligned}
	\big(\cHhatn(&\mu))_{i, l}  = \partial_\mu \left(j_\mu(d_{\mu_l} u_{\red,\mu})+2k_\mu(u_{\red,\mu},d_{\mu_l} u_{\red,\mu})  - a_\mu(d_{\mu_l} u_{\red,\mu},p_{\red,\mu}+w_{\red,\mu}) \right.\\
	& + r^\pr_\mu(u_{\red,\mu})[d_\nu p_{\red,\mu}+d_{\mu_l} w_{\red,\mu}] - 2 k_\mu (z_{\red,\mu},d_{\mu_l} u_{\mu,\red}) \\
	& + a_\mu(z_{\red,\mu},d_{\mu_l} p_{\red,\mu}) - r^\du_\mu(u_{\red,\mu},p_{\red,\mu})[d_{\mu_l} z_{\red,\mu}]   \\
	& \left.+\partial_\mu (\J(u_{\red,\mu},\mu) + r^\pr_\mu(u_{\red,\mu})[p_{\red,\mu}+w_{\red,\mu}]
	- r_\mu^\du(u_{\red,\mu},p_{\red,\mu})[z_{\red,\mu}])\cdot e_l \right)\cdot e_i,
	\end{aligned}
	\]   
	and
	\begin{align*}
	\big(\HHhat_{h,\mu}(\mu)\big)_{i, l} \kern -0.1em
	=  \kern -0.1em \partial_\mu\Big(&\kern -0.1em\partial_u \J(u_{h,\mu}, \mu)[d_{\mu_l}u_{h,\mu}] \kern -0.2em+\kern -0.2em r_\mu^\pr(u_{h,\mu})[d_{\mu_l}p_{h,\mu}]\kern -0.2em -\kern -0.1em a_{\mu}(d_{\mu_l}u_{h,\mu}, p_{h,\mu}) \\
	&+\partial_\mu\big(\J(u_{h,\mu}, \mu)+ r_\mu^\pr(u_{h,\mu})[p_{h,\mu}]\big)\cdot e_l\Big)\cdot e_i.
	\end{align*}
	We get
	\begin{align*}
	\big|\big(&\HHhat_{h,\mu}(\mu) - \cHhatn(\mu)\big)_{i, l}\big| \\ &\leq \big| \partial_\mu\Big(\partial_u \J(u_{h,\mu}, \mu)[d_{\mu_l}u_{h,\mu}] - \partial_u \J(u_{\red,\mu}, \mu)[	d_{\mu_l}u_{\red, \mu}] + l_{\mu}(d_{\mu_l}p_{h,\mu}) 
	\\&\quad-  l_{\mu}(d_{\mu_l}p_{\red, \mu}) 
	- a_{\mu}(d_{\mu_l}u_{h,\mu}, p_{h,\mu}) + a_{\mu}(d_{\mu_l}u_{\red, \mu}, p_{\red,\mu}) 
	- a_{\mu}(u_{h,\mu}, d_{\mu_l}p_{h,\mu}) \\
	&\quad+ a_{\mu}(u_{\red,\mu}, d_{\mu_l}p_{\red, \mu}) 
	+ a_\mu(d_{\mu_l} u_{\red,\mu},w_{\red,\mu}) - r^\pr_\mu(u_{\red,\mu})[d_{\mu_l} w_{\red,\mu}]\\ 
	&\quad+ 2 k_\mu (z_{\red,\mu},d_{\mu_l} u_{\mu,\red}) 
	- a_\mu(z_{\red,\mu},d_{\mu_l} p_{\red,\mu}) + r^\du_\mu(u_{\red,\mu},p_{\red,\mu})[d_{\mu_l} z_{\red,\mu}]\\
	&\quad+\partial_\mu\big(\J(u_{h,\mu}, \mu) - \J(u_{\red,\mu}, \mu) 
	+ l_{\mu}(p_{h,\mu}) - l_{\mu}(p_{\red,\mu}) 
	- a_{\mu}(u_{h,\mu}, p_{h,\mu}) \\
	&\qquad\quad+  a_{\mu}(u_{\red,\mu}, p_{\red,\mu}) 
	+ r^\pr_\mu(u_{\red,\mu})[w_{\red,\mu}]- r_\mu^\du(u_{\red,\mu},p_{\red,\mu})[z_{\red,\mu}] \big)\cdot e_l\quad\Big)\cdot e_i \,\big|. 
	\end{align*}
	For the first terms we see that 
	\begin{align*}
	\partial_\mu\big(\partial_u &\J(u_{h,\mu}, \mu)[d_{\mu_l}u_{h,\mu}] - \partial_u \J(u_{\red,\mu}, \mu)[\dred{\mu_l}u_{\red, \mu}] \big)\cdot e_i 
	\\ & =\partial_\mu \big( j_{\mu}(d_{\mu_l} e_{h, \mu}^\pr) + 2 k_{\mu}(d_{\mu_l} e_{h, \mu}^\pr,u_{h,\mu}) + 2 k_{\mu}(\dred{\mu_l}u_{\red, \mu},e_{h, \mu}^\pr) \big)\cdot e_i. 
	\end{align*}
	Obviously, this equation still incorporates the norm of the FOM solution $u_{h,\mu}$. However, we can simply estimate these FOM quantities by $\|u_{h,\mu}\| \leq \Delta_\pr(\mu) + \|u_{\red,\mu}\|$. 
	For the second terms we have
	\begin{align*}
	\partial_\mu(l_{\mu}(d_{\mu_l}p_{h,\mu}) -  l_{\mu}(\dred{\mu_l}p_{\red, \mu}) )\cdot e_i =  \partial_\mu(l_{\mu}(d_{\mu_l} e_{h, \mu}^\du))\cdot e_i.
	\end{align*}
	The third terms can be determined by
	\begin{align*}
	- \partial_\mu\big( a_{\mu}(d_{\mu_l}u_{h,\mu}, p_{h,\mu}) -& a_{\mu}(\dred{\mu_l}u_{\red, \mu}, p_{\red,\mu})\big)\cdot e_i \\
	&= -  \partial_\mu\big( a_{\mu}(d_{\mu_l} e_{h, \mu}^\pr, p_{h,\mu}) - a_{\mu}(\dred{\mu_l}u_{\red, \mu}, e_{h,\mu}^\du)\big)\cdot e_i
	\end{align*}
	and similarly we get for the fourth term
	\begin{align*}
	- \partial_\mu\big( a_{\mu}(u_{h,\mu}, d_{\mu_l}p_{h,\mu}) &- a_{\mu}(u_{\red,\mu}, \dred{\mu_l}p_{\red, \mu})\big)\cdot e_i \\
	&= -  \partial_\mu\big( a_{\mu}(e_{h, \mu}^\pr, d_{\mu_l}p_{h,\mu}) - a_{\mu}(u_{\red,\mu},d_{\mu_l} e_{h, \mu}^\du)\big)\cdot e_i.
	\end{align*}
	With the same strategy as above, we have for the first part of the second derivatives that
	\begin{align*}
	\partial_\mu\Big(&\partial_\mu\big(\J(u_{h,\mu}, \mu) - \J(u_{\red,\mu}, \mu) + l_{\mu}(p_{h,\mu}) - l_{\mu}(p_{\red,\mu}) \\
	&\qquad
	- a_{\mu}(u_{h,\mu}, p_{h,\mu}) +  a_{\mu}(u_{\red,\mu}, p_{\red,\mu}) \big)\cdot e_l\Big)\cdot e_i  \\
	&\hS{40} 
	= \partial_\mu\Big(\partial_\mu\big(j_{\mu}(e_{h, \mu}^\pr) + 2 k_{\mu}(e_{h, \mu}^\pr,u_{h,\mu}) + 2 k_{\mu}(u_{\red, \mu}^\pr,e_{h, \mu}^\pr)  
	+ l_{\mu}(e_{h, \mu}^\du) \\
	&\hS{110}
	- (a_{\mu}(e_{h, \mu}^\pr, p_{h,\mu}) -  a_{\mu}(u_{\red,\mu}, e_{h, \mu}^\du) \big)\cdot e_l\Big)\cdot e_i.
	\end{align*}
	For the rest of the proof we simply use the Cauchy-Schwarz inequality for all terms and sum all pieces together to get $\Delta_{{\HH}_{i,l}}(\mu)$.  
	
	For a proof of (i) and (ii) we refer to \cite[Proposition 2.7]{KMOSV20}. For the first sensitivity estimation (iii) we use the equations \eqref{z_eq_sens} and \eqref{sens_res_pr} to get
	\begin{align*}
	\underline{\bformd} \|d_{\mu_l}&z_{\red,\mu}\|^2 \leq \bformd(d_{\mu_l}z_{\red,\mu},d_{\mu_l}z_{\red,\mu}) \\&= 
	-\partial_\mu(r_\mu^\pr(u_{\red,\mu})[d_{\mu_l}z_{\red,\mu}] + a_\mu(z_{\red,\mu},d_{\mu_l}z_{\red,\mu}))\cdot \nu + a_\mu(d_\nu u_{\red,\mu},d_{\mu_l}z_{\red,\mu}) \\
	&= -\resd^{\pr,d_{\mu_i}}(u_{\red, \mu}, d_{\mu_i}u_{\red, \mu})[d_{\mu_l}z_{\red,\mu}] - \partial_\mu \bformd(z_{\red,\mu},d_{\mu_l}z_{\red,\mu}) \cdot \nu \\
	&\leq \left(\|\resd^{\pr,d_{\mu_i}}(u_{\red, \mu}, d_{\mu_i}u_{\red, \mu})\| + \cont{\partial_{\mu_l} \bformd}\|z_{\red,\mu}\| \right) \|d_{\mu_l}z_{\red,\mu}\|.
	\end{align*}
	For (iv), we instead use \eqref{w_eq_sens} and \eqref{sens_res_du} and yield
	\begin{align*}
	\underline{\bformd} \|&d_{\mu_l}w_{\red,\mu}\|^2 \leq \bformd(d_{\mu_l}w_{\red,\mu},d_{\mu_l}w_{\red,\mu}) \\
	&= \partial_\mu (r_\mu^\du(u_{\red,\mu},p_{\red,\mu})[d_{\mu_l}w_{\red,\mu}]-2k_\mu(z_{\red,\mu},d_{\mu_l}w_{\red,\mu})-a_\mu(d_{\mu_l}w_{\red,\mu},w_{\red,\mu}))\cdot\nu \\
	&\quad +2k_\mu(d_{\mu_l}w_{\red,\mu},d_\nu u_{\red,\mu}-d_\nu z_{\red,\mu})-a_\mu(d_{\mu_l}w_{\red,\mu}, d_\nu p_{\red,\mu}) \\
	&= \resd^{\du,d_{\mu_i}}(u_{\red, \mu}, p_{\red, \mu}, d_{\mu_i}u_{\red, \mu}, d_{\mu_i}p_{\red, \mu})[d_{\mu_l}w_{\red,\mu}]  
	- 2k_\mu(d_{\mu_l}w_{\red,\mu},d_\nu z_{\red,\mu}) \\
	&\quad - \partial_\mu (2k_\mu(z_{\red,\mu},d_{\mu_l}w_{\red,\mu})-a_\mu(d_{\mu_l}w_{\red,\mu},w_{\red,\mu}))\cdot\nu \\
	&\leq \Big(\|\resd^{\du,d_{\mu_i}}(u_{\red, \mu}, p_{\red, \mu}, d_{\mu_i}u_{\red, \mu}, d_{\mu_i}p_{\red, \mu}) \| + 2\cont{k_\mu}\|d_{\mu_l}z_{\red,\mu}\| \\
	&\qquad \qquad + 2\cont{\partial_\mu k_\mu}\|z_{\red,\mu}\| + \cont{\partial_\mu \bformd}\|w_{\red,\mu}\| \Big) \|d_{\mu_l}w_{\red,\mu}\|.
	\end{align*}	
\end{proof}

\end{document}